\title[Pluricomplex Green function]{Pluripotential theory on Teichm{\"u}ller space I \\ -- Pluricomplex Green function --}
\author{Hideki Miyachi}
\date{\today}
\address{Division of Mathematical and Physical Sciences,
Graduate School of Natural Science \& Technology,
Kanazawa University,
Kakuma-machi, Kanazawa,
Ishikawa, 920-1192, Japan
}
\email{miyachi@se.kanazawa-u.ac.jp}
\thanks{This work is partially supported by JSPS KAKENHI Grant Numbers
16K05202,
16H03933,
17H02843}
\keywords{Singular Euclidean structures, Teichm{\"u}ller space, Teichm{\"u}ller distance, Levi forms, Pluricomplex Green functions}
\subjclass[2010]{30F60, 32G15, 57M50, 31B05, 32U05, 32U35}
\newtheorem{theorem}{Theorem}
\theoremstyle{definition}
\newtheorem{example}{Example}[section]
\newtheorem{remark}{Remark}[section]
\newtheorem{lemma}{Lemma}[section]
\newtheorem{proposition}{Proposition}[section]
\newtheorem{corollary}{Corollary}[section]
\newcommand{\teich}{\mathcal{T}}
\newcommand{\ext}{{\rm Ext}}
\newcommand{\order}{\boldsymbol{o}}
\newcommand{\cohomologyclass}[1]{{\bf #1}}
\newcommand{\chainclass}[1]{\widetilde{{\bf #1}}}
\newcommand{\homorel}{{\rm Hom}}
\newcommand{\quaddiff}[2]{{\bf q}[#1,#2]}
\newcommand{\terminaldiff}[2]{{\bf Q}[#1,#2]}
\newcommand{\projQ}{\Pi}
\newcommand{\verticalfoliation}[1]{v(#1)}
\newcommand{\tvector}[2]{{\tt v}\left(#1,#2\right)}
\newcommand{\TeichHomeo}{\Xi}
\newcommand{\teichdisk}{{\bf D}}
\newcommand{\deriv}[2]{{\tt D}_{#1}(#2)}
\newcommand{\derivbar}[2]{\overline{{\tt D}}_{#1}(#2)}
\newcommand{\normalvec}[1]{\boldsymbol{\nu}(#1)}
\newcommand{\Levi}[3]{\mathcal{L}(#1)[#2,#3]}
\begin{document}
 

\begin{abstract}
This is the first paper in a series of investigations of the pluripotential theory on Teichm{\"u}ller space.
One of the main purpose of this paper is to give an alternative approach to the Krushkal formula of the pluricomplex Green function on Teichm{\"u}ller space.
We also show that Teichm{\"u}ller space carries a natural stratified structure of real-analytic submanifolds defined from the structure of singularities of the initial differentials of the Teichm{\"u}ller mappings from a given point.
We will also give a description of the Levi form of the pluricomplex Green function using the Thurston symplectic form via Dumas' symplectic structure on the space of holomorphic quadratic differentials.
\end{abstract}

\maketitle

\section{Introduction}
\label{sec:Introduction}
This is the first paper in a series of investigations of the pluripotential theory on Teichm{\"u}ller space.
One of the purpose of this paper is to give an alternative approach to a characterization of the pluricomplex Green function on Teichm{\"u}ller space, which was first discussed by Krushkal in \cite{Krushkal:1992}:
\begin{theorem}[Pluricomplex Green function on Teichm{\"u}ller space]
\label{thm:main1}
Let $\teich_{g,m}$ be the Teichm{\"u}ller space of Riemann surfaces of analytically finite type $(g,m)$, and $d_T$ the Teichm{\"u}ller distance on $\teich_{g,m}$.
Then, the pluricomplex Green function $g_{\teich_{g,m}}$
on $\teich_{g,m}$ satisfies
\begin{equation}
\label{eq:Klushkal-formula}
g_{\teich_{g,m}}(x,y)=\log\tanh d_T(x,y)
\end{equation}
for $x,y\in \teich_{g,m}$.
\end{theorem}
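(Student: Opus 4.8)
The plan is to establish the two inequalities $g_{\teich_{g,m}}(x,y)\le \log\tanh d_T(x,y)$ and $g_{\teich_{g,m}}(x,y)\ge \log\tanh d_T(x,y)$ separately, working from the definition of the pluricomplex Green function $g_{\teich_{g,m}}(\cdot,y)$ as the upper envelope of the family of nonpositive plurisubharmonic functions $u$ on $\teich_{g,m}$ carrying a logarithmic pole at $y$ (i.e.\ $u(z)\le \log\|z-y\|+O(1)$ in a holomorphic chart near $y$). The normalization is fixed by the model case of the unit disk $\mathbb{D}$, where $g_{\mathbb{D}}(z,0)=\log|z|$ while the Poincar\'e distance satisfies $|z|=\tanh d_{\mathbb{D}}(0,z)$, so that $g_{\mathbb{D}}(z,0)=\log\tanh d_{\mathbb{D}}(0,z)$; the asserted formula is exactly the Teichm\"uller-space analogue of this identity.

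For the upper bound I would exploit Teichm\"uller's existence theorem together with Royden's identification of the Teichm\"uller metric with the Kobayashi metric. These furnish, for any $x,y$, a holomorphic isometric embedding $f\colon\mathbb{D}\to\teich_{g,m}$ (the Teichm\"uller disk) with $f(0)=y$ and $f(\zeta)=x$, where $d_{\mathbb{D}}(0,\zeta)=d_T(x,y)$, hence $|\zeta|=\tanh d_T(x,y)$. For any competitor $u$ in the envelope, the pullback $u\circ f$ is nonpositive and subharmonic on $\mathbb{D}$ and inherits a logarithmic pole at $0$; comparison with the disk Green function gives $u(f(t))\le \log|t|$, so $u(x)=u(f(\zeta))\le\log|\zeta|=\log\tanh d_T(x,y)$. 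Taking the supremum over all such $u$ yields $g_{\teich_{g,m}}(x,y)\le\log\tanh d_T(x,y)$.

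For the lower bound it suffices to prove that the single function $V(x):=\log\tanh d_T(x,y)$ is itself an admissible competitor, that is, that it is plurisubharmonic on $\teich_{g,m}$; granting this, $V$ lies in the defining family, so $g_{\teich_{g,m}}(x,y)\ge V(x)$, which combined with the upper bound closes the argument. Negativity of $V$ is immediate from $\tanh<1$, and the pole estimate follows because $d_T(x,y)\le C\|x-y\|$ in a local chart (the Kobayashi--Royden metric being locally comparable to the Euclidean one), so that near $y$ one has $V(x)=\log d_T(x,y)+O(1)\le \log\|x-y\|+O(1)$. By the same extremal-disk considerations $V$ coincides with the Lempert function of $\teich_{g,m}$ with pole at $y$, so the entire content of the lower bound is the nontrivial \emph{plurisubharmonicity} of this Lempert function.

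The \textbf{main obstacle} is precisely this plurisubharmonicity, and it is where the machinery developed earlier enters. I would compute the Levi form of $V$ along complex tangent directions, using Teichm\"uller theory to express $d_T(x,y)$ through the extremal quasiconformal map and its initial holomorphic quadratic differential, and parametrizing deformations via the Teichm\"uller disk. Since $d_T(\cdot,y)$ is only piecewise real-analytic---its regularity being governed by the singularity type of the initial differential---the natural framework is the stratified real-analytic structure on $\teich_{g,m}$, on each stratum of which the Levi form is well defined. The crux is to show this Levi form is nonnegative; I expect to reduce its positivity to that of a symplectic pairing, identifying the second-order term with the Thurston symplectic form transported through Dumas' symplectic structure on the space of holomorphic quadratic differentials, where nonnegativity becomes manifest. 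Controlling the passage across the strata, so that pointwise nonnegativity upgrades to genuine plurisubharmonicity on all of $\teich_{g,m}$, is the delicate final point.
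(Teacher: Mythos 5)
Your overall architecture matches the paper's: both inequalities reduce, via Klimek's comparison $g_\Omega\le\log\tanh\mathrm{Kob}_\Omega$ (your extremal-disk argument is just the standard proof of that inequality, combined with Royden's theorem) and the envelope definition, to the single claim that $V=\log\tanh d_T(\cdot\,,y)$ is plurisubharmonic; and the paper, like you, proves this by a Levi-form computation on the top stratum of a real-analytic stratification followed by a removability argument across the lower strata (Blanchet--Chirka, using Earle's $C^1$ regularity of $d_T$ away from the pole and the boundedness above near $y$).

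The genuine gap is at the crux. You propose to conclude by ``identifying the second-order term with the Thurston symplectic form \ldots where nonnegativity becomes manifest.'' That nonnegativity is not manifest, and in the paper the logical arrow points the other way: the inequality $\omega_{Th}(d\mathcal{V}_{x_0}(\psi_1),d\mathcal{V}_{x_0}(\psi_2))\ge 0$ is \emph{deduced} in \S\ref{sec:Topological-description} as a corollary of the plurisubharmonicity, not used to prove it. The actual mechanism is a decomposition of $T_{x_1}\teich_{g,m}$ into the $(1,0)$-part $H^{1,0}_{x_1}$ of the complex tangent space of the Teichm\"uller sphere and the direction $\normalvec{x_1}$ tangent to the Teichm\"uller disk through $x_0$ and $x_1$: along $\normalvec{x_1}$ the Levi form of $\log\tanh d_T$ vanishes exactly; on $H^{1,0}_{x_1}$ the troublesome term $-\tfrac{1}{16\tanh^2(d_0)}|\cohomologyclass{u}[q_0]\wedge\overline{\derivbar{x_1}{v}}|^2$ (which reflects the concavity of $\log\tanh$ and threatens to destroy plurisubharmonicity) drops out, and what remains is a positive multiple of the Levi form of $d_T(x_0,\cdot)$ itself, whose nonnegativity is imported from the previously known plurisubharmonicity of the Teichm\"uller distance function (Krushkal, Miyachi); finally the two subspaces are orthogonal for the Levi form. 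Without this external input and this orthogonal splitting, your plan has no way to control the negative correction coming from the concavity of $\log\tanh$, so the key step as you describe it would not go through.
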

See \S\ref{subsec:complex-analysis} for the definition of the pluricomplex Green function.
In the second paper \cite{2018arXiv181004343M}, we will establish the Poisson integral formula for pluriharmonic functions on Teichm{\"u}ller space which are continuous on the Bers compactification. The Krushkal formula \eqref{eq:Klushkal-formula} of the pluricomplex Green function plays a crucial rule in the second paper. This result is announced in \cite{Miyachi:2018}.

\subsection{Results}
From Klimek's work \cite{Klimek:1985},
it suffices for proving \eqref{eq:Klushkal-formula} to show that 
the right-hand side of \eqref{eq:Klushkal-formula} is plurisubharmonic
(cf. \S\ref{subsec:complex-analysis}). To show this, Krushkal applied Poletskii's characterization of the pluricomplex Green function (cf. \cite{Poletskii_Shabat:1986}). 

Our strategy is a more direct method with looking ahead to future research (see \S\ref{subsec:background-motivation}). Indeed, we calculate the Levi form of  the log-tanh of the Teichm{\"u}ller distance at generic points, and to check the non-negative definiteness (\S\ref{subsec:log-tanh-PSH}). The Levi form is a fundamental and standard invariant of plurisubharmonic functions in Pluripotential theory (cf. \cite{Klimek:1991}). From our calculation, we observe that the Levi-form of the pluricomplex Green function is described by the Thurston symplectic form on the space of measured foliations via Dumas' symplectic structure on the space of holomorphic quadratic differentials (cf.\cite{Dumas:2015}). This description implies a condition for deformations of Teichm{\"u}ller mappings from a fixed base point to complex-analytically varying targets from the topological aspect in Teichm{\"u}ller theory (cf. \S\ref{sec:Topological-description}). The calculation is established with the variations of the periods of holomorphic one forms on the double covering surfaces defined from initial and terminal Teichm{\"u}ller differentials (cf. \S\ref{sec:pluri-green-function}).

\subsubsection{The Demailly distance on $\teich_{g,m}$}
Let $\Omega$ be a hyperconvex domain in $\mathbb{C}^N$. Let $g_\Omega$ be the pluricomplex Green function on $\Omega$. Demailly \cite[Th{\'e}or{\`e}me 5.3]{Demailly:1987} defined a distance $\boldsymbol{\delta}_\Omega$ related to the pluricomplex Green function by
\begin{equation}
\label{eq:Demailly-distance}
\boldsymbol{\delta}_\Omega(z,w)=\limsup_{\zeta\to \partial \Omega}\left|
\log\frac{g_{\Omega}(z,\zeta)}{g_{\Omega}(w,\zeta)}
\right|\quad (z,w\in \Omega).
\end{equation}
The Demailly distance gives a Harnack-type inequality for the pluriharmonic Poisson kernel.
To discuss the Demailly distance on $\teich_{g,m}$, we identify $\teich_{g,m}$ with the Bers slice with base point $x_0\in \teich_{g,m}$ via the Bers embedding (cf. \cite{Bers:1961}). From Theorem \ref{thm:main1}, we observe the following pluripotential theoretic characterization of the Teichm{\"u}ller distance, which will  be proved in \S\ref{subsec:proof-corollary}.
\begin{corollary}[Demailly distance on $\teich_{g,m}$]
\label{coro:Demailly-distance}
The Demailly distance on $\teich_{g,m}$ coincides with the twice of the Teichm{\"u}ller distance.
\end{corollary}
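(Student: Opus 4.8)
The plan is to substitute the Krushkal formula \eqref{eq:Klushkal-formula} directly into the definition \eqref{eq:Demailly-distance} of the Demailly distance and reduce the computation to the elementary asymptotics of $\log\tanh$. Under the Bers embedding we regard $\teich_{g,m}$ as a bounded hyperconvex domain $\Omega\subset\mathbb{C}^N$, and since the Teichm{\"u}ller metric is complete, a point $\zeta$ tending to $\partial\Omega$ leaves every compact subset of $\teich_{g,m}$; hence $d_T(x,\zeta)\to\infty$ for each fixed $x$. Thus both $a:=d_T(z,\zeta)$ and $b:=d_T(w,\zeta)$ tend to $+\infty$ as $\zeta\to\partial\Omega$, while $|a-b|\le d_T(z,w)$ stays bounded by the triangle inequality.

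First I would record the expansion $\tanh t=1-2e^{-2t}+O(e^{-4t})$, whence $\log\tanh t=-2e^{-2t}+O(e^{-4t})=-2e^{-2t}\bigl(1+O(e^{-2t})\bigr)$ as $t\to+\infty$. Applying this with $t=a$ and $t=b$ and using Theorem \ref{thm:main1} gives
\[
\frac{g_{\teich_{g,m}}(z,\zeta)}{g_{\teich_{g,m}}(w,\zeta)}
=\frac{\log\tanh a}{\log\tanh b}
=e^{-2(a-b)}\bigl(1+o(1)\bigr),
\]
so that
\[
\left|\log\frac{g_{\teich_{g,m}}(z,\zeta)}{g_{\teich_{g,m}}(w,\zeta)}\right|
=2\,|d_T(z,\zeta)-d_T(w,\zeta)|+o(1)
\]
as $\zeta\to\partial\Omega$. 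Since the error term tends to $0$, passing to the $\limsup$ yields
\[
\boldsymbol{\delta}_{\teich_{g,m}}(z,w)
=2\,\limsup_{\zeta\to\partial\Omega}\,|d_T(z,\zeta)-d_T(w,\zeta)|.
\]

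It then remains to show that the horofunction-type quantity $\limsup_{\zeta\to\partial\Omega}|d_T(z,\zeta)-d_T(w,\zeta)|$ equals $d_T(z,w)$. The upper bound $\le d_T(z,w)$ is immediate from the triangle inequality, valid for every $\zeta$. For the matching lower bound I would extend the Teichm{\"u}ller geodesic segment from $w$ to $z$ to a geodesic ray $\gamma\colon[0,\infty)\to\teich_{g,m}$ with $\gamma(0)=w$ and $\gamma(d_T(w,z))=z$. Along this ray $d_T(w,\gamma(t))=t$ and $d_T(z,\gamma(t))=t-d_T(w,z)$ for $t\ge d_T(w,z)$, so $|d_T(z,\gamma(t))-d_T(w,\gamma(t))|=d_T(w,z)$ for all large $t$; letting $t\to\infty$ realizes the value $d_T(z,w)$ in the $\limsup$. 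Combining the two bounds gives $\boldsymbol{\delta}_{\teich_{g,m}}(z,w)=2\,d_T(z,w)$.

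The main obstacle I anticipate is the boundary bookkeeping rather than the analytic estimate: one must verify that the chosen geodesic ray (and, dually, an arbitrary sequence used for the upper bound) genuinely approaches $\partial\Omega$ in the sense required by \eqref{eq:Demailly-distance}. This is exactly where completeness of the Teichm{\"u}ller metric and the fact that the Bers embedding is a homeomorphism onto the bounded domain $\Omega$ enter: a geodesic ray is a proper map, hence exits every compact subset of $\teich_{g,m}$, and under the Bers embedding leaving compacta is equivalent to converging to $\partial\Omega$. A secondary point to be careful about is the uniformity of the $o(1)$ term above, which holds because it depends only on $\min\{a,b\}\to\infty$ together with the boundedness of $|a-b|$.
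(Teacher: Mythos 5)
Your proposal is correct and follows essentially the same route as the paper: expand $\log\tanh t\sim -2e^{-2t}$ to get $g_{\teich_{g,m}}(x,\zeta)/g_{\teich_{g,m}}(y,\zeta)=e^{-2(d_T(x,\zeta)-d_T(y,\zeta))}(1+o(1))$, bound above by the triangle inequality, and realize the bound with a divergent sequence along the extended Teichm{\"u}ller geodesic through the two points. Your extra remarks on the uniformity of the $o(1)$ term and on why a geodesic ray tends to $\partial\Omega$ under the Bers embedding just make explicit what the paper leaves implicit.
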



\subsubsection{Stratification of Teichm{\"u}ller space and Removable singularities}
Dumas \cite{Dumas:2015} gave a complex-analytic stratification in the space $\mathcal{Q}_{x_0}$ of (non-zero) holomorphic quadratic differentials on $x_0\in \teich_{g,m}$ in terms of the structure of singularities (cf. \S\ref{subsec:stratification-Qy0}).
Sending the stratification on $\mathcal{Q}_{x_0}$ by the Teichm{\"u}ller homeomorphism (\S\ref{subsubsec:Teichmuller-homeo}), we obtain a topological stratification on $\teich_{g,m}-\{x_0\}$.
The top stratum $\teich_\infty$ consists of $x\in \teich_{g,m}-\{x_0\}$ such that the initial differential of the Teichm{\"u}ller mapping from $x_0$ to $x$ is generic.
We will show that the induced stratification on $\teich_{g,m}-\{x_0\}$ is a real-analytic stratification in the sense that each stratum is a real-analytic submanifold (cf. Theorem \ref{thm:stratificationofT}).
Applying the stratification, we shall show the following, which is crucial in our proof of Theorem \ref{thm:main1} (cf. \S\ref{subsec:removable-singularites}).

\begin{theorem}[Non-generic strata are removable]
\label{thm:removable-singularities}
A function of class $C^1$ on $\teich_{g,m}-\{x_0\}$ is plurisubharmonic on $\teich_{g,m}$ if 
it is plurisubharmonic on the top stratum $\teich_\infty$
and bounded above around $x_0$.
\end{theorem}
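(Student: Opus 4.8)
The plan is to remove the singular locus in two stages, matching the two hypotheses of the statement. Write $u$ for the given $C^1$ function and set
\[
\Sigma:=\bigl(\teich_{g,m}-\{x_0\}\bigr)-\teich_\infty,
\]
the union of the non-generic strata. Since Dumas' discriminant locus in $\mathcal{Q}_{x_0}$ has complex codimension at least one \cite{Dumas:2015}, and the Teichm\"uller homeomorphism is a stratum-preserving homeomorphism which, by Theorem~\ref{thm:stratificationofT}, restricts to a real-analytic diffeomorphism on each stratum, the closed set $\Sigma$ has real codimension at least two in $\teich_{g,m}$. The strategy is: (i) to show that plurisubharmonicity propagates from the open dense top stratum $\teich_\infty$ across $\Sigma$, using the $C^1$ regularity together with this codimension bound, so that $u$ becomes plurisubharmonic on all of $\teich_{g,m}-\{x_0\}$; and (ii) to remove the isolated point $x_0$ by the classical removable-singularity theorem for plurisubharmonic functions, for which the hypothesis that $u$ is bounded above near $x_0$ is exactly what is needed.

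The core is step (i), which is local and invariant under biholomorphism, so I would argue in a holomorphic chart and show that the current $dd^{c}u$ is positive. By hypothesis $dd^{c}u\ge 0$ on $\teich_\infty$, so for any smooth positive test form $\varphi$ of bidegree $(n-1,n-1)$ (here $n=\dim_{\mathbb C}\teich_{g,m}=3g-3+m$) with compact support, it remains to control the contribution of a neighbourhood of $\Sigma$. I would choose a regularised distance $\rho$ to $\Sigma$ and logarithmic cut-offs $\chi_\epsilon$ with $\chi_\epsilon\equiv 0$ near $\Sigma$ and $\chi_\epsilon\equiv1$ off an $\epsilon$-neighbourhood. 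Splitting $\langle dd^{c}u,\varphi\rangle=\langle dd^{c}u,\chi_\epsilon\varphi\rangle+\langle dd^{c}u,(1-\chi_\epsilon)\varphi\rangle$, the first term is nonnegative because $\chi_\epsilon\varphi$ is supported in $\teich_\infty$. For the second, since $u\in C^{1}$ the form $d^{c}u$ is continuous, so an integration by parts (Stokes) reduces $\langle dd^{c}u,(1-\chi_\epsilon)\varphi\rangle$, up to sign, to $\int d^{c}u\wedge d\bigl((1-\chi_\epsilon)\varphi\bigr)$; expanding the differential, the only delicate piece is $\int d^{c}u\wedge\varphi\wedge d\chi_\epsilon$. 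As $|d^{c}u|$ and $\varphi$ are bounded, this is dominated by $\int|d\chi_\epsilon|\,dV$, and the codimension-two bound on $\Sigma$ forces this integral to tend to $0$ via the coarea formula, the logarithmic cut-off being chosen precisely so that $\int_{\{\epsilon^{2}<\rho<\epsilon\}}\rho^{-1}\,dV/|\log\epsilon|\to0$. Letting $\epsilon\to0$ yields $\langle dd^{c}u,\varphi\rangle\ge0$, so $u$ is plurisubharmonic on $\teich_{g,m}-\{x_0\}$.

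For step (ii), $\{x_0\}$ is a single point, hence pluripolar, and $u$ is plurisubharmonic and bounded above on a punctured neighbourhood of $x_0$; the standard extension theorem then produces a plurisubharmonic function on the full neighbourhood agreeing with $u$ away from $x_0$, which together with step (i) gives plurisubharmonicity on $\teich_{g,m}$.

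The main obstacle I anticipate is the cut-off estimate in step (i): verifying $\int|d\chi_\epsilon|\,dV\to0$ requires both the sharp codimension input—namely that Dumas' non-generic locus has complex codimension at least one, promoted through Theorem~\ref{thm:stratificationofT} to a real-analytic set of real codimension at least two—and enough control on the local geometry of $\Sigma$ to construct the regularised distance $\rho$ and run the coarea estimate. It is here that the $C^{1}$ hypothesis is genuinely used, both to bound $d^{c}u$ and to justify the integration by parts, whereas mere boundedness from above would be insufficient to cross the positive-dimensional set $\Sigma$; boundedness is reserved for, and suffices at, the isolated point $x_0$.
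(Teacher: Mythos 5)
Your overall architecture (first cross the non-generic locus inside $\teich_{g,m}-\{x_0\}$, then remove the isolated point $x_0$ using boundedness above) matches the paper, and step (ii) is fine. But step (i) takes a genuinely different route from the paper and, as written, has a gap precisely at the point you flag as ``the main obstacle.'' Your cut-off argument treats the whole set $\Sigma=(\teich_{g,m}-\{x_0\})-\teich_\infty$ at once, and its convergence hinges on a Minkowski-content (tube-volume) estimate $\mathrm{vol}\{\,\mathrm{dist}(\cdot,\Sigma)<\epsilon\,\}=O(\epsilon^{2})$ on compacta. That estimate is not justified by what is proved in the paper: $\Sigma$ is not a submanifold but a locally finite union of \emph{locally closed} real-analytic submanifolds that accumulate on one another, and near a point of a lower stratum $\teich_i$ a higher stratum $\teich_j$ is a non-closed piece of a submanifold whose $(2n-2)$-dimensional volume near its frontier is not controlled a priori (for merely $C^{1}$, or even $C^{\infty}$, submanifolds it can be infinite). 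One would want subanalyticity or local volume finiteness of the closures $\overline{\teich_j}$ to run the coarea estimate, but Theorem~\ref{thm:stratificationofT} only gives that each stratum is a real-analytic submanifold, and the paper explicitly states that it is not known whether $\overline{\teich_i}$ is a real-analytic subset. Topological codimension $\ge 2$ of $\Sigma$, which is all you actually establish, is not the metric information your estimate needs.

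The gap is repairable, and the repair essentially reproduces the paper's proof: work stratum by stratum by downward induction on dimension. By Lemma~\ref{lem:locally-finite}, around a point of a stratum $\teich_i$ there is a neighborhood $U$ meeting only finitely many strata, all of dimension $\ge\dim\teich_i$, so $\teich_i\cap U$ is a \emph{closed} real-analytic submanifold of $U$ and $u$ is already plurisubharmonic on $U-\teich_i$ by the inductive hypothesis; removing a single closed submanifold is then unproblematic (your tube argument works there, since the tube estimate for a closed submanifold on compacta is standard). The paper does exactly this, but invokes the Blanchet--Chirka removability theorem (Proposition~\ref{prop:Blanchet}) instead of a tube estimate; note that because $u$ is $C^{1}$, that theorem needs only \emph{positive} real codimension, i.e.\ it even crosses real hypersurfaces, so the codimension-two input you work to extract is not actually needed once the argument is localized to one stratum at a time. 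In short: replace ``remove $\Sigma$ in one stroke'' by the inductive, locally finite peeling of strata, or else supply the missing uniform volume bound for $\epsilon$-neighborhoods of the full stratified set.
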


\subsection{Backgrounds, Motivation and Future}
\label{subsec:background-motivation}
Teichm{\"u}ller space $\teich_{g,m}$ is a complex manifold which is homeomorphic to the Euclidean space. The \emph{infinitesimal} complex structure is well-understood from the Kodaira-Spencer theory and the Ahlfors-Bers theory (cf. \cite{Imayoshi_Taniguchi:1992} and \cite{Nag:1988}). Regarding the \emph{global} complex analytic property, it is known that Teichm{\"u}ller space is realized as a polynomially convex and hyperconvex domain in the complex Euclidean space (cf. \cite[Theorem 6.6]{Imayoshi_Taniguchi:1992}, \cite{Shiga:1984} and \cite{Krushkal:1991}), and the Teichm{\"u}ller distance coincides with the Kobayashi distance under the complex structure (cf. \cite{Royden:1971}). 

On the other hand, to the author's knowledge, the \emph{global} complex analytical structure is still less-developed to discuss the \emph{end} (\emph{boundary}) of the Teichm{\"u}ller space from the complex analytical view point. In fact, it is conjectured that the Bers boundary of Teichm{\"u}ller space is a fractal set in some sense (cf. \cite[Question 10.7 in \S10.3]{Canary:2010}). To analyse the geometry of the Bers boundary, we need to understand the behavior of holomorphic functions (holomorphic local coordinates) around the Bers boundary which are defined on the Bers closure, such as the trace functions derived from projective structures or Kleinian groups. Actually, the complex length functions, which are defined from the trace functions, are roughly estimated with topological invariants around the Bers boundary in the proof of the ending lamination theorem (cf. \cite{Minsky:1999}, \cite{Minsky:2010} and \cite{BrockCanaryMinsky:2012}), and the estimations turn out to be very important and useful estimates for studying the boundary (e.g. \cite{KomoriParkkonen:2007} and \cite{Miyachi:2003}). However, it seems to be expected sharper estimates with topological invariants for investigating the geometry of the Bers boundary to establish the conjecture.

According to Thurston theory, the end of the Teichm{\"u}ller space from the topological view point consists of the degenerations of complex or hyperbolic structures on a reference surface via the geometric intersection numbers (cf. \cite{Douady_et_al:1979}). The Teichm{\"u}ller distance and the extremal length are appeared from Quasiconformal geometry on Riemann surfaces. Recently, they are thought of as geometric intersection numbers under the Gardiner-Masur compactification, and give a connection between Quasiconformal geometry and Thurston theory (Extremal length geometry) (cf. \cite{Gardiner_Masur:1991}, \cite{Kerckhoff:1980} and \cite{Miyachi:2014}). 

Pluripotential theory is a powerful theory for investigating complex manifolds. The pluricomplex Green function is one of important functions in Pluripotential theory. The pluricomplex Green function is a fundamental solution of the Dirichlet problem relative to the Monge-Amp\`ere operator, and defines the pluriharmonic measures on the boundaries for hyperconvex domains (cf. \cite{Demailly:1987} and \cite{Klimek:1985}. See also \S\ref{subsec:complex-analysis}). The asymptotic behavior of the pluricomplex Green function on a domain is sensitive in terms of the regularity of the boundary (cf. \cite{Coman:1998}, \cite{DiderichHerbort:2000}, \cite{Herbort:2014}). 

Since Teichm{\"u}ller space is hyperconvex, from Demailly's theory  \cite{Demailly:1987}, $\teich_{g,m}$ admits a unique pluricomplex Green function. By virtue of the ending lamination theorem, the Bers boundary is parametrized by the topological invariants called the ending invariants. By Demailly's Poisson integral formula formulated with the pluriharmonic measures (\cite{Demailly:1987}), holomorphic functions around the Bers closure are represented with their boundary functions which are defined on a space with topological background, and are studied  from a bird's-eye view in Complex function theory.

As a conclusion, the Krushkal formula \eqref{eq:Klushkal-formula} and further investigations on the pluricomplex Green function are expected to strengthen mutual interaction among Quasiconformal geometry, the complex analytic (Pluripotential theoretic) aspect and the topological aspect (Thurston theory) in Teichm{\"u}ller theory.

\subsection{About the paper}
This paper is organized as follows.
From \S\ref{sec:Teichmuller-theory} to \S\ref{sec:stratification-Qgm},
we recall the basic notion and properties in Teichm{\"u}ller theory.
In \S\ref{sec:Deformation-qd},
we discuss the deformation of singular Euclidean structures
associated to the Teichm{\"u}ller deformations from a fixed point $x_0\in \teich_{g,m}$.
In \S\ref{sec:Stratification-Qy0},
we will give the stratification on $\teich_{g,m}-\{x_0\}$.
We show Theorem \ref{thm:main1} and Corollary \ref{coro:Demailly-distance}  in \S\ref{sec:pluri-green-function},
and  discuss the topological description of the Levi form in \S\ref{sec:Topological-description}.

\subsection*{Acknowledgements}
The author thanks Professor Ken'ichi Ohshika for fruitful discussions.
The author also thanks Professor Masanori Adachi for indicating him
to Blanchet's and Chirka's papers \cite{Blanchet:1995} and \cite{Chirka:2003},
and Professor Hiroshi Yamaguchi for his warm advices and encouragements.

\section{Teichm{\"u}ller theory}
\label{sec:Teichmuller-theory}
Let $\Sigma_{g,m}$ be a closed orientable surface 
of genus $g$ with $m$-marked points with $2g-2+m>0$ (possibly $m=0$).
In this section,
we recall basics in Teichm{\"u}ller theory.
For reference,
see \cite{Douady_et_al:1979},
\cite{Gardiner:1987} ,
\cite{Hubbard:2006},
\cite{Imayoshi_Taniguchi:1992},
and \cite{Nag:1988}
for instance.

\subsection{Teichm{\"u}ller space}
\emph{Teichm{\"u}ller space} $\teich_{g,m}$
is the set of equivalence classes
of marked Riemann surfaces of type $(g,m)$. 
A \emph{marked Riemann surface} $(M,f)$ of type $(g,m)$
is a pair of a Riemann surface $M$ of analytically finite type $(g,m)$
and an orientation preserving homeomorphism
$f\colon \Sigma_{g,m}\to M$.
Two marked Riemann surfaces $(M_{1},f_{1})$ and $(M_{2},f_{2})$
of type $(g,m)$ are
\emph{(Teichm{\"u}ller) equivalent} if there is a conformal mapping
$h\colon M_{1}\to M_{2}$ such that $h\circ f_{1}$ is homotopic to $f_{2}$.

The \emph{Teichm{\"u}ller distance} $d_T$ is a complete distance
on $\teich_{g,m}$ defined by
$$
d_T(x_1,x_2)=\frac{1}{2}\log \inf_hK(h)
$$
for $x_i=(M_i,f_i)$ ($i=1,2$),
where the infimum runs over all quasiconformal mapping
$h\colon M_1\to M_2$ homotopic to $f_2\circ f_1^{-1}$,
and $K(h)$ is the maximal dilatation of a quasiconformal mapping $h$.

%
\subsection{Quadratic differentials and Infinitesimal complex structure on $\teich_{g,m}$}
For $x=(M,f)\in \teich_{g,m}$,
we denote by $\mathcal{Q}_{x}$ be the complex Banach space
of holomorphic quadratic differentials $q=q(z)dz^{2}$
on $M$ with $L^1$-norm
$$
\|q\|=\int_{M}|q(z)|\frac{\sqrt{-1}}{2}dz\wedge d\overline{z}
<\infty.
$$
From the Riemann-Roch theorem,
the space $\mathcal{Q}_{x}$ is isomorphic to $\mathbb{C}^{3g-3+m}$.
Let
$$
\projQ\colon
\mathcal{Q}_{g,m}=\cup_{x\in \teich_{g,m}}\mathcal{Q}_{x}\to \teich_{g,m}
$$
be the complex vector
bundle of quadratic differentials over $\teich_{g,m}$.
A differential $q\in \mathcal{Q}_{g,m}$ is said to be \emph{generic}
if all zeros are simple and all marked points of the underlying surface
are simple poles of $q$.
Generic differentials are open and dense subset in $\mathcal{Q}_{g,m}$
and in each fiber $\mathcal{Q}_{x}$ for $x\in \teich_{g,m}$.

\subsubsection{Infinitesimal complex structure}
Teichm{\"u}ller space $\teich_{g,m}$ is a complex manifold of dimension
$3g-3+m$.
The infinitesimal complex structure is described as follows:
Let $x=(M,f)\in \teich_{g,m}$.
Let $L^{\infty}(M)$ be the Banach space of measurable $(-1,1)$-forms
$\mu=\mu(z)d\overline{z}/dz$ on $M$
with
$$
\|\mu\|_{\infty}={\rm ess.sup}_{p\in M}|\mu(p)|<\infty.
$$
The holomorphic tangent space $T_{x}\teich_{g,m}$ at $x$ of $\teich_{g,m}$
is described as the quotient space
$$
L^{\infty}(M)/\{\mu\in L^{\infty}(M)\mid
\langle \mu,\varphi\rangle=0,\forall \varphi\in \mathcal{Q}_{x}\},
$$
where
\begin{equation*}
\label{eq:pairing}
\langle \mu,\varphi\rangle=
\int_{M}\mu(z)\varphi(z)\frac{\sqrt{-1}}{2}dz\wedge d\overline{z}.
\end{equation*}
For $v=[\mu]\in T_x\teich_{g,m}$
and $\varphi\in \mathcal{Q}_x$,
the \emph{canonical pairing} between $T_x\teich_{g,m}$
and $\mathcal{Q}_x$ 
is defined by
$$
\langle v,\varphi\rangle=\langle \mu,\varphi\rangle
$$
and,
it induces
an identification between $\mathcal{Q}_x$
and the holomorphic cotangent space $T_x^*\teich_{g,m}$.

\subsubsection{The Teichm{\"u}ller homeomorphism}
\label{subsubsec:Teichmuller-homeo}
Let $\mathcal{UQ}_{x}$ be the unit ball in $\mathcal{Q}_{x}$.
For $q\in \mathcal{UQ}_{x}$,
we define a quasiconformal mapping $f^q$ on $M$
from the Beltrami differential $\|q\|(\overline{q}/|q|)\in L^\infty(M)$.
Then,
$\teich_{g,m}$ is homeomorphic to $\mathcal{UQ}_{x}$ with
\begin{equation}
\label{eq:Teichmuller-homeomorphism}
\TeichHomeo=\TeichHomeo_x\colon \mathcal{UQ}_{x}\ni q\mapsto (f^q(M),f^q\circ f)\in \teich_{g,m}.
\end{equation}
We call the homeomorphism \eqref{eq:Teichmuller-homeomorphism}
the \emph{Teichm{\"u}ller homeomorphism}.
The Teichm{\"u}ller homeomorphism gives a useful representation of the Teichm{\"u}ller distance
as
\begin{equation}
\label{eq:Teichmuller-homeo-distance}
d_T(x,\TeichHomeo_x(q))=\frac{1}{2}\log \frac{1+\|q\|}{1-\|q\|}=\tanh^{-1}(\|q\|)
\end{equation}
for $q\in \mathcal{UQ}_{x}$,

\subsection{Measured foliations}
Let $\mathcal{S}$ be the set of homotopy classes of 
non-trivial and non-peripheral simple closed curves on $\Sigma_{g,m}$.
Let $i(\alpha,\beta)$ denote the \emph{geometric intersection number}
for simple closed curves $\alpha,\beta\in \mathcal{S}$.
Let $\mathcal{WS}=\{t\alpha\mid t\ge 0, \alpha\in \mathcal{S}\}$
be the set of weighted simple closed curves.
The set $\mathcal{S}$
is canonically identified with
a subset of $\mathcal{WS}$
as weight $1$ curves.

We consider an embedding
\begin{equation*}
\label{eq:MFdef}
\mathcal{WS}\ni t\alpha\mapsto [\mathcal{S}\ni \beta
\mapsto t\,i(\alpha,\beta)]\in \mathbb{R}_{\ge 0}^{\mathcal{S}}.
\end{equation*}
We topologize the function space
$\mathbb{R}_{\ge 0}^{\mathcal{S}}$
with the topology of pointwise convergence.
The closure $\mathcal{MF}$ of the image of the embedding is called
the space of \emph{measured foliations} on $\Sigma_{g,m}$.
The space $\mathcal{MF}$ 
is homeomorphic 
to $\mathbb{R}^{6g-6+2m}$,
and
contains the weighted simple closed curves $\mathcal{WS}$
as a dense subset.
The intersection number on $\mathcal{WS}$
is defined by $i(t\alpha,s\beta)=ts\,i(\alpha,\beta)$
for $t\alpha,s\beta\in \mathcal{WS}$.
The intersection number extends continuously 
as a non-negative function $i(\,\cdot\,,\,\cdot\,)$
on $\mathcal{MF}\times \mathcal{MF}$
with $i(F,F)=0$ and $F(\alpha)=i(F,\alpha)$
for $F\in \mathcal{MF}\subset \mathbb{R}_{\ge 0}^{\mathcal{S}}$
and $\alpha\in \mathcal{S}$.

\subsection{Hubbard-Masur differentials and Extremal length}
Let $x=(M,f)\in \teich_{g,m}$.
For $q\in \mathcal{Q}_x$,
the \emph{vertical foliation} $\verticalfoliation{q}$ of $q$ is a measured foliation
defined by
$$
i(\verticalfoliation{q},\alpha)=\inf_{\alpha'\in f(\alpha)}\int_{\alpha'}\left|{\rm Re}(\sqrt{q})\right|
$$
for $\alpha\in \mathcal{S}$.
Hubbard and Masur observed that
the correspondence,
which we call the \emph{Hubbard-Masur homeomorphism},
\begin{equation}
\label{eq:HM-symp}
\mathcal{V}_{x}\colon \mathcal{Q}_{x}\ni q\mapsto \verticalfoliation{q}\in \mathcal{MF}
\end{equation}
is a homeomorphism (cf. \cite{Hubbard_Masur:1979} and Remark \ref{remark:HM-homeo}).
For $F\in \mathcal{MF}$,
the \emph{Hubbard-Masur differential} $q_{F,x}$ for $F$ at $x$ is defined to
satisfy $\verticalfoliation{q_{F,x}}=F$.
By definition,
$q_{tF,x}=t^2q_{F,x}$ for $F\in \mathcal{MF}$ and $t\ge 0$.

For $F\in \mathcal{MF}$,
the \emph{extremal length} of $F$ at $x$ is defined by
$$
\ext_x(F)=\|q_{F,x}\|.
$$
Kerckhoff \cite{Kerckhoff:1980} observed that 
the Teichm{\"u}ller distance is expressed as
\begin{equation}
\label{eq:kerckhoff-formula}
d_T(x,y)=\frac{1}{2}\log\sup_{\alpha\in \mathcal{S}}\frac{\ext_x(\alpha)}{\ext_y(\alpha)}
\end{equation}
for $x,y\in \teich_{g,m}$.
This expression is called the \emph{Kerckhoff formula} of the Teichm{\"u}ller distance.

\section{Double covering spaces associated to quadratic differentials}
\label{sec:Double-cover}
%
\subsection{Branched covering spaces}
Let $x_0=(M_0,f_0)\in \teich_{g,m}$.
Let $\Sigma_m=\Sigma_m(x_0)$ be the marked points of $M_0$.
Let $q_{0}\in \mathcal{Q}_{x_0}\subset \mathcal{Q}_{g,m}$.
Let 
$\Sigma_{s}(q_{0})$ be
the set
of singularities
of $q_{0}$.
In accordance with  \cite{Masur_Smillie:1991},
a singular point of $q_{0}$ is called \emph{orientable} if it is of even order,
and \emph{non-orientable} otherwise.
Any orientable singular point  is a zero of $q_{0}$.
Let $\Sigma_{o}=\Sigma_{o}(q_{0})$ (resp. $\Sigma_{e}=\Sigma_{e}(q_{0})$)
be the set of non-orientable (resp. orientable)
zeros of $q_{0}$ (``$o$" and ``$e$" stand for ``odd" and ``even").
The number of non-orientable singularities is always even. 
By definition,
$\Sigma_{s}(q_{0})=\Sigma_{o}(q_{0})\cup \Sigma_{e}(q_{0})$.
We define
\begin{align*}
\Sigma(q_{0})
&=\Sigma_{e}(q_{0})\cup \Sigma_{o}(q_{0})\cup\Sigma_{m}(x_{0}),
\\
\Sigma_{sm}(q_{0})&=\Sigma_{s}(q_{0})\cap
\Sigma_{m}(x_{0}),
\\
\Sigma_{s\setminus m}(q_{0})&=\Sigma_{s}(q_{0})\setminus 
\Sigma_{m}(x_{0}),
\\
\Sigma_{m\setminus s}(q_{0})&=\Sigma_{m}(x_{0})\setminus
\Sigma_{s}(q_{0}),
\\
\Sigma_{ub}(q_{0})
&=\Sigma_{e}(q_{0})\sqcup \Sigma_{m\setminus s}(q_{0}).
\end{align*}
We call a marked point in $\Sigma_{m\setminus s}(q_{0})$ \emph{free}.
The set $\Sigma(q_{0})$ is the totality of marked points
caused by $q_{0}$,
and it is represented as the disjoint unions
\begin{align*}
\Sigma(q_{0})
&=
\Sigma_{ub}(q_{0})\sqcup \Sigma_{o}(q_{0}).
\end{align*}
%

Consider the double branched covering space
$\pi_{q_{0}}\colon \tilde{M}_{q_{0}}\to M_0$ of the square root $\sqrt{q_{0}}$
(cf. Figure \ref{fig:coveringspace}).
\begin{figure}
\centering
\includegraphics[width=8cm]{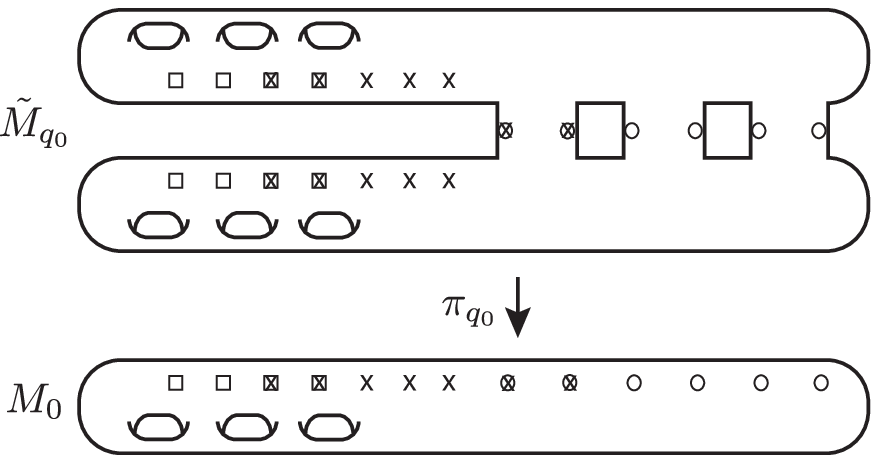}
\caption{
The covering space $\pi_{q_{0}}\colon\tilde{M}_{q_{0}}\to M_{0}$:
Symbols
$\circ$, $\square$ and $\times$ in the figure mean non-orientable singularities,
orientable singularities, and marked points,
respectively.
Each singularity may coincide with a marked point
(denoted by $\otimes$ for non-orientable
singularities and by $\boxtimes$ otherwise).
Points $\otimes$ may or may not be poles of $q_{0}$.
In our notation,
$\Sigma(q_{0})=\{\circ, \otimes, \square,\boxtimes,\times\}$,
$\Sigma_{s}(q_{0})=\{\circ, \otimes, \square,\boxtimes\}$,
$\Sigma_{sm}(q_{0})=\{\otimes,\boxtimes\}$,
$\Sigma_{m\setminus s}(q_{0})=\{\times\}$,
$\Sigma_{s\setminus m}(q_{0})=\{\circ, \square\}$,
$\Sigma_{ub}(q_{0})=\{\times,\square,\boxtimes\}$,
$\Sigma_{o}(q_{0})=\{\circ, \otimes\}$,
and
$\Sigma_{e}(q_{0})=\{\square, \boxtimes\}$.
}
\label{fig:coveringspace}
\end{figure}
For $p\in \Sigma(q_{0})$,
the preimage $\pi_{q_{0}}^{-1}(p)$ consists of two points
if and only if $p\in \Sigma_{ub}(q_{0})$ (``ub'' stands for ``unbranched'').
The projection
$\pi_{q_{0}}$ is double-branched over points in $\tilde{\Sigma}_{o}(q_{0})$.
The surface $\tilde{M}_{q_{0}}$ is a closed surface
of genus
\begin{equation*}
\label{eq:genus-covering}
\tilde{g}(q_{0})=
\begin{cases}
{\displaystyle 2g-1+\frac{1}{2}{}^\#\Sigma_o(q_{0})} & 
(\mbox{$q_{0}$ is not square}) \\
2g &
(\mbox{otherwise}).
\end{cases}
\end{equation*}
(cf. \cite[\S2]{Hubbard_Masur:1979}).
In particular,
the surface $\tilde{M}_{q_{0}}$ is a closed Riemann surface
of genus $4g-3+m$ when $q_{0}$ is generic.
The square root $\sqrt{q_{0}}$ on $M_{0}$ is lifted as the
Abelian differential $\omega_{q_{0}}$
on $\tilde{M}_{q_{0}}$.
The covering transformation $i_{q_{0}}$
of the covering is a conformal involution on $\tilde{M}_{q_{0}}$
which satisfies $\pi_{q_{0}}\circ i_{q_{0}}=\pi_{q_{0}}$
and $i_{q_{0}}^*\omega_{q_{0}}=-\omega_{q_{0}}$.
For each set $\Sigma_{\bullet}(q_{0})$ defined above,
we denote by $\tilde{\Sigma}_{\bullet}(q_{0})$
the preimage of $\Sigma_{\bullet}(q_{0})$.
When $q_0$ is \emph{square} in the sense that $q_0=\omega^2$
for some Abelian differential $\omega$ on $M_0$,
$\tilde{M}_{q_0}$ consists of two copies of $M_0$.
We consider the pair $(\tilde{M}_{q_{0}},\tilde{\Sigma}_{ub}(q_{0}))$
as a Riemann surface with marked points.

\medskip
\noindent
{\bf Convention}\quad
Let $V$ be a vector space $V$ with an involution.
We denote by $V^\pm$ the eigenspace in $V$ of the eigenvalue $\pm 1$
of the action of the involution.\qed

\medskip
We also remark the following elementary fact:
For vector spaces $V_i$ with an involution ($i=1,2,3$),
an exact sequence $0\to V_1\to V_2\to V_3\to 0$
commuting the involutions
induces an exact sequence $0\to V_1^-\to V_2^-\to V_3^-\to 0$.


\subsection{A subspace in quadratic differentials}
For $q_{0}\in\mathcal{Q}_{x_{0}}$ and $x_{0}=(M_{0},f_{0})$,
we define
\begin{align*}
\mathcal{Q}^{T}_{x_{0}}(q_{0})
&=\left\{
\psi\in \mathcal{Q}_{x_{0}}\mid 
(\psi)\ge 
\prod_{p\in \Sigma_{s\setminus m}(q_{0})}
p^{\order_{p}(q_{0})-1}
\prod_{p\in \Sigma_{sm}(q_{0})}
p^{\order_{p}(q_{0})
}
\right\}.
\end{align*}
where $\order_{p}(q_{0})$ is the order of $q_0$ at $p\in M_0$,
and $(\psi)$ is the divisor of $\psi$.
The symbol ``$T$'' stands for ``tangent''.
When $M_{0}$ has no marked point (i.e. $m=0$),
$\psi\in \mathcal{Q}^{T}_{x_{0}}(q_{0})$
is equivalent to the condition that
$\psi/q_{0}$ has at most simple poles on $M_{0}$
(cf. \cite[Lemma 5.2]{Dumas:2015}.
See also Proposition \ref{prop:tangent-space-strata} below).
Notice that 
$\mathcal{Q}_{x_{0}}^T(q_{0})=\mathcal{Q}_{x_{0}}$
if $q_{0}$ is generic.

\subsection{The $q$-realizations of tangent vectors}
Let $x_{0}=(M_0,f_0)\in \teich_{g,m}$ and $q_{0}\in \mathcal{Q}_{x_{0}}-\{0\}$
be a generic differential.
For $v\in T_{x_{0}}\teich_{g,m}$,
a holomorphic quadratic differential
$\eta_v\in \mathcal{Q}_{x_{0}}$ 
said to be the \emph{$q_{0}$-realization} of $v$ 
if it satisfies
\begin{equation}
\label{eq:anti-complex-eta}
\langle v,\psi\rangle_{x_{0}}=\int_{M_{0}}\frac{\overline{\eta_v}}{|q_{0}|}\psi
\end{equation}
for all 
$\psi\in \mathcal{Q}_{x_{0}}$
where $x_{0}=(M_{0},f_{0})\in \teich_{g,m}$
(cf. \cite{Miyachi:2017}).
Notice that
$\eta_v$ in \eqref{eq:anti-complex-eta}
does exist because 
\begin{equation}
\label{eq:Hermitian-product-q0}
(\psi_1,\psi_2)\mapsto \int_{M_{0}}\frac{\psi_1\overline{\psi_2}}{|q_{0}|}
\end{equation}
is a non-degenerate Hermitian inner product on $\mathcal{Q}_{x_{0}}$
(cf. \cite[\S5]{Dumas:2015}).
The correspondence
$$
T_{x_0}\teich_{g,m}\ni v\mapsto \eta_v\in \mathcal{Q}_{x_0}
$$
is an anti-complex linear isomorphism.
The Hermitian form
\eqref{eq:Hermitian-product-q0} is calculated as
\begin{align}
\int_{M_{0}}\frac{\psi_1\overline{\psi_2}}{|q_{0}|}
&=
\frac{\sqrt{-1}}{4}\int_{\tilde{M}_{q_0}}
\frac{\pi_{q_0}^*(\psi_1)}{\omega_{q_0}}\wedge
\overline{\left(
\frac{\pi_{q_0}^*(\psi_2)}{\omega_{q_0}}\right)}
\label{eq:Hermite-form-cover}
\\
&=
\frac{1}{2}\int_{\tilde{M}_{q_0}}
{\rm Re}\left(\frac{\pi_{q_0}^*(\psi_1)}{\omega_{q_0}}
\right)\wedge
{\rm Im}\left(
\frac{\pi_{q_0}^*(\psi_2)}{\omega_{q_0}}\right)
\nonumber
\\
&\qquad+
\frac{\sqrt{-1}}{2}\int_{\tilde{M}_{q_0}}
{\rm Re}\left(\frac{\pi_{q_0}^*(\psi_1)}{\omega_{q_0}}
\right)\wedge
{\rm Re}\left(
\frac{\pi_{q_0}^*(\psi_2)}{\omega_{q_0}}\right)
\nonumber
\end{align}
for $\psi_1$,
$\psi_2\in \mathcal{Q}_{x_0}$.

\section{Stratifications on $\mathcal{Q}_{g,m}$}
\label{sec:stratification-Qgm}

\subsection{Stratification}
Following Dumas \cite{Dumas:2015},
we recall the definition of stratifications on manifolds.
Let $Z$ be a manifold.
A \emph{stratification} of $Z$ is a locally finite collection
of locally closed submanifolds $\{Z_{i}\}_{i\in I}$ of $Z$, the \emph{strata},
indexed by a set $I$ such that
\begin{enumerate}
\item
$Z=\cup_{j\in I}Z_{j}$
\item
$Z_{j}\cap \overline{Z_{k}}\ne \emptyset$
if and only if $Z_{j}\subset \overline{Z_{k}}$.
\end{enumerate}
From the second condition,
$Z_{i}\cap Z_{j}\ne \emptyset$ if and only if $Z_{i}=Z_{j}$
because each $Z_{i}$ is locally closed.
A stratification of a complex manifold $Z$ a
\emph{complex-analytic stratification} if 
the closure $\overline{Z_{j}}$ and 
the boundary $\overline{Z_{j}}\setminus Z_{j}$
of each stratum $Z_{j}$ are complex-analytic sets.

\subsection{Strata  in $\mathcal{Q}_{g,m}$}
\label{subsec:Stratum}
Our strata and symbol are slightly different from that treated by Masur-Smillie \cite{Masur_Smillie:1991} 
and Veech \cite{Veech:1990}.
We consider here the deformation of
\emph{quadratic differentials with marked points}
for our purpose
(see also \S\ref{subsec:Remark-stratification-Qgm} below).
If any marked point of given quadratic differential is a singular point,
our strata are coincides with their strata
(cf. \cite[\S1]{Veech:1990}).

A \emph{symbol} of
$q\in \mathcal{Q}_{g,m}-\{0\}$ is a quadruple
$\boldsymbol{\pi}=(\boldsymbol{m},\boldsymbol{n}(-1),\boldsymbol{n}(\cdot),\varepsilon)$
where $\boldsymbol{m}$
is the number of free marked points,
$\boldsymbol{n}(-1)$ is the number of poles,
$\boldsymbol{n}(l)$ is the number of zeros of order $l\ge 1$,
and $\varepsilon=\pm 1$ according to whether
$q$ is square ($\varepsilon=1$)
or not ($\varepsilon=-1$).
We set $\boldsymbol{n}(0)=0$ for simplicity.
Notice that $\sum_{l\ge -1} l\cdot \boldsymbol{n}(l)=4g-4$.
Let $\mathcal{Q}(\boldsymbol{\pi})=\mathcal{Q}_{g,m}(\boldsymbol{\pi})\subset \mathcal{Q}_{g,m}$
be the set of holomorphic quadratic differentials in $\mathcal{Q}_{g,m}$
whose symbol is $\boldsymbol{\pi}$.
As we discuss in Proposition \ref{prop:local-coordinateQgm} below,
each component of $\mathcal{Q}(\boldsymbol{\pi})$ is a complex manifold of dimension
\begin{align}
\label{eq:dimension-Strata}
\dim_{\mathbb{C}}\mathcal{Q}(\boldsymbol{\pi})
&=
2g+\frac{\varepsilon-3}{2}+\boldsymbol{m}+\sum_{l\ge -1}\boldsymbol{n}(l).
\end{align}
Let ${\boldsymbol \pi}(q)=
(\boldsymbol{m}_{q},\boldsymbol{n}_{q}(-1),\boldsymbol{n}_{q}(\cdot),\varepsilon_{q})$
be the symbol of $q\in \mathcal{Q}_{g,m}$.
If $\Sigma_s(q)=\emptyset$,
we have $g=1$ and $q$ is square.
We set ${\boldsymbol \pi}(q)=(m,0,\{0,\cdots\},1)$ in this case.

Since
$
\boldsymbol{m}_{q}+\sum_{l\ge -1}\boldsymbol{n}_{q}(l)={}^\#\Sigma(q)
={}^\#\Sigma_0(q)+{}^\#\Sigma_{ub}(q)$
for $q\in \mathcal{Q}_{g,m}$,
from 
\eqref{eq:dimension-Strata},
we can check the following.
\begin{align*}
&\dim_{\mathbb{C}}\mathcal{Q}(\boldsymbol{\pi}(q))
=
\dim_{\mathbb{C}}{\rm Hom}(H_1(\tilde{M}_{q},\tilde{\Sigma}_{ub}(q),\mathbb{R})^-,\mathbb{C}).
\label{eq:genus-covering3}
\end{align*}

%
%

\subsection{Remark on the stratification on $\mathcal{Q}_{g,m}$}
\label{subsec:Remark-stratification-Qgm}
Our stratification of $\mathcal{Q}_{g,m}$ is slightly different from 
Masur-Smillie-Veech's one
in the following sense:
We have mainly two differences from their stratification:
\begin{enumerate}
\item
If a free marked point and a singular point collide
in a moving of quadratic differentials,
we recognize the quadratic differentials to be \emph{degenerating} into the other stratum.
Two free marked points can not collide
because we consider the deformation on $\teich_{g,m}$;
and
\item
if a singular point of a quadratic differential in a stratum lies at a marked point,
the singular point stays on the marked point in deforming on the stratum
for the quadratic differential.
\end{enumerate}
All of these phenomena can be handled by standard arguments
with complex analysis
(for instance, \cite{Masur_Smillie:1991} and \cite{Veech:1986}).

\subsection{Masur-Smillie-Veech charts of the strata in $\mathcal{Q}_{g,m}$}
\label{subsec:local-chart-Qpi}
For $q_{0}\in \mathcal{Q}({\boldsymbol \pi}(q_{0}))$,
the union $\cup_{q}\tilde{M}_{q}$
is regarded as a trivial bundle
over a small contractible neighborhood of $q_{0}$
whose fiber is a (possibly disconnected) surface with marked points.
For each $q\in \mathcal{Q}({\boldsymbol \pi}(q_{0}))$
which is sufficiently close to $q_{0}$,
the surface $\tilde{M}_{q}$
admits a marking inherited from
the product structure of the bundle.
Hence,
we can identify $H_1(\tilde{M}_q,\Sigma_{ub}(q),\mathbb{R})^-$
 and $H_1(\tilde{M}_q,\Sigma_{ub}(q),\mathbb{R})$
with 
$H_1(\tilde{M}_{q_0},\Sigma_{ub}(q_{0}),\mathbb{R})^-$
 and $H_1(\tilde{M}_{q_0},\Sigma_{ub}(q_{0}),\mathbb{R})$
for $q\in \mathcal{Q}({\boldsymbol \pi}(q_{0}))$
near $q_{0}$ in the canonical manner.

The following is well-known
(e.g. \cite{Masur_Smillie:1991},
\cite{Masur2:1995},
\cite{Veech:1986} and \cite{Veech:1990}).

\begin{proposition}[Local chart]
\label{prop:local-coordinateQgm}
There is a neighborhood $V_0$ of $q_{0}$ in
$\mathcal{Q}({\boldsymbol \pi}(q_{0}))$
such that
the mapping
\begin{equation*}
\label{eq:local-coordinate}
\Phi_0\colon V_0
\ni q
\mapsto 
\left[
C\mapsto \int_C\omega_{q}
\right]
\in 
{\rm Hom}(H_1(\tilde{M}_{q_0},\Sigma_{ub}(q_{0}),\mathbb{R})^-,\mathbb{C})
\end{equation*}
is a holomorphic local chart around $q_{0}$.
\end{proposition}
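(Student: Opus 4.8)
The plan is to recognize this as the period-coordinate theorem for a stratum, applied to the family of double covers $\pi_q\colon\tilde M_q\to M_0$, and to follow the classical scheme of Veech \cite{Veech:1990} and Masur--Smillie \cite{Masur_Smillie:1991}. The starting point is already in place: the trivialization of $\cup_q\tilde M_q$ over a contractible neighborhood of $q_0$ recalled just above the statement identifies $H_1(\tilde M_q,\tilde\Sigma_{ub}(q),\mathbb{R})$ canonically with $H_1(\tilde M_{q_0},\tilde\Sigma_{ub}(q_0),\mathbb{R})$ compatibly with the involutions $i_q$, hence identifies the $(-1)$-eigenspaces as well. Under this identification every class is represented by a locally constant relative cycle $C$, and $\Phi_0$ is defined by $C\mapsto\int_C\omega_q$. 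Since $i_q^*\omega_q=-\omega_q$, the period of any invariant cycle vanishes, so $\Phi_0$ genuinely factors through the $(-1)$-eigenspace, as the statement requires.

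First I would verify that $\Phi_0$ is holomorphic. Inspecting the orders at the singularities shows that $\omega_q$ is an honest \emph{holomorphic} abelian differential on $\tilde M_q$: over a free marked point or an even-order zero the cover is unbranched and $\sqrt q$ pulls back holomorphically, while over an odd-order singularity (in particular a simple pole, where $q\sim z^{-1}dz^2$) the local square-root coordinate $w$ with $w^2=z$ turns $\sqrt q$ into a holomorphic form. Consequently $\omega_q$ depends holomorphically on $q$, and integrating it over the fixed locally constant cycles is a holomorphic operation; thus each coordinate $q\mapsto\int_C\omega_q$, and therefore $\Phi_0$, is holomorphic.

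It then remains to show that $\Phi_0$ is a local biholomorphism, and here I would first use that both sides have the same complex dimension: the target has dimension $\dim_{\mathbb{C}}{\rm Hom}(H_1(\tilde M_{q_0},\tilde\Sigma_{ub}(q_0),\mathbb{R})^-,\mathbb{C})$, which by the dimension identity relating it to \eqref{eq:dimension-Strata} equals $\dim_{\mathbb{C}}\mathcal{Q}(\boldsymbol{\pi}(q_0))$. So it suffices to prove that $d_{q_0}\Phi_0$ is injective and to conclude with the inverse function theorem. For injectivity the plan is to reconstruct $q$ from its periods: the anti-invariant relative periods are exactly the edge vectors of a polygonal (translation-surface) presentation of the flat structure $(\tilde M_q,\omega_q)$, so they determine $\omega_q$ up to the discrete combinatorial data fixed by the symbol, and hence determine $q$ via $\pi_q^*q=\omega_q^2$. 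Infinitesimally, a tangent vector of $\mathcal{Q}(\boldsymbol{\pi}(q_0))$ lying in $\ker d_{q_0}\Phi_0$ corresponds to a first-order deformation of the differential with all anti-invariant relative periods fixed, which the same rigidity forces to be trivial; here the fact that the $(-1)$-part of a short exact sequence of involutive spaces stays exact (recalled above) keeps the homological bookkeeping on the cover consistent.

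The main obstacle will be precisely this rigidity step: showing that the relative periods determine the differential, equivalently that $d\Phi_0$ is injective, is where genuine flat-geometric content enters rather than formal manipulation. A secondary point requiring care is that the marked-point conventions here differ from the Masur--Smillie--Veech setup (a free marked point colliding with a singularity is treated as degeneration, and a singular point sitting at a marked point stays there; cf. \S\ref{subsec:Remark-stratification-Qgm}), so one must confirm that these conventions do not enlarge the local deformation space beyond what the period map detects, i.e.\ that cycles relative to $\tilde\Sigma_{ub}(q_0)$, rather than to all of $\tilde\Sigma(q_0)$, are the correct objects to pair with $\omega_q$.
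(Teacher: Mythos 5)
The paper offers no proof of this proposition: it is stated as well-known and delegated entirely to \cite{Masur_Smillie:1991}, \cite{Masur2:1995}, \cite{Veech:1986} and \cite{Veech:1990}, so there is no argument in the text to compare yours against line by line. Your outline is the standard period-coordinate argument from exactly those references, and its individual steps are sound: the local computation showing $\omega_q$ is a genuinely holomorphic abelian differential on $\tilde M_q$ (including over simple poles, where $q\sim z^{-1}dz^2$ becomes $4\,dw^2$ in the coordinate $w^2=z$), the vanishing of periods over invariant cycles forcing $\Phi_0$ to factor through the $(-1)$-eigenspace, the dimension match with \eqref{eq:dimension-Strata}, and the reduction to injectivity of $d_{q_0}\Phi_0$ via the inverse function theorem. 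You correctly identify the rigidity step as the real content; in Veech's and Masur--Smillie's treatments this is carried out precisely by the geodesic-triangulation mechanism you gesture at (and which the paper itself exploits in \S\ref{sec:Deformation-qd}: the edges of the lifted Delaunay-type triangulation are relative cycles, their $\omega_q$-periods are the edge vectors of the polygonal presentation, and prescribing these determines the flat surface, hence $q$, for the fixed combinatorics). Your closing caveat about pairing against $\tilde\Sigma_{ub}(q_0)$ rather than all of $\tilde\Sigma(q_0)$ is also resolved in the standard way: a relative cycle ending at a branch point is fixed by the involution at that endpoint, so after anti-symmetrizing it contributes nothing beyond what absolute cycles and cycles relative to the unbranched preimages already give, which is consistent with the dimension identity the paper records. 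In short, the proposal is a correct plan that matches the (cited, external) proof the paper relies on; the only part left schematic is the triangulation-based injectivity argument, which is exactly the part the references supply.
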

%

\section{Deformations of quadratic differentials}
\label{sec:Deformation-qd}
Henceforth,
we set 
$\homorel(q_0)={\rm Hom}(H_1(\tilde{M}_{q_0},\tilde{\Sigma}_{ub}(q_{0}),\mathbb{R})^-,\mathbb{C})$
for the simplicity.
From Proposition \ref{prop:local-coordinateQgm},
$T_{q_0}\mathcal{Q}(\boldsymbol{\pi}(q_0))$
is isomorphic to $\homorel(q_0)$ as $\mathbb{C}$-vector spaces.

In this section, to describe the deformations via the periods geometrically, we consider a $\Delta$-complex structure on $\tilde{M}_{q_0}$ for given $q_0\in \mathcal{Q}_{g,m}$, and describe the infinitesimal deformations along  elements in $T_{q_0}\mathcal{Q}(\boldsymbol{\pi}(q_0))\cong\homorel(q_0)$ by piecewise affine deformations. The description was already discussed by various authors (e.g. \cite{Masur_Smillie:1991} and \cite{Veech:1990}). We also discuss it for the completeness.

\subsection{$\Delta$-complex structure}
A \emph{$\Delta$-complex structure} on a space $X$ is a
collection of a singular simplex $\sigma_\alpha\colon \Delta^n\to X$
($\Delta^n$ is the standard $n$-simplex),
with $n=n(\alpha)$ such that
\begin{enumerate}
\item
the restriction $\sigma_\alpha$ to the interior of $\Delta^n$
is injective,
and each point of $X$ is in the image of exactly one such restriction;
\item
each restriction of $\sigma_\alpha$ to a face of $\Delta^n$ is one of the maps $\sigma_\beta\colon \Delta^{n-1}\to X$.
Here, we are identifying the face of $\Delta^n$ with $\Delta^{n-1}$ by the canonical linear homeomorphism between them
that preserves the ordering of the vertices;
and
\item
a set $A\subset X$ is open if and only if $\sigma^{-1}_\alpha(A)$ is open in $\Delta^n$ for each $\sigma_\alpha$
\end{enumerate}
(cf. \cite[\S2.1]{Hatcher:2002}).
A $\Delta$-complex structure on a surface gives a kind of triangulations.
The (relative) (co)homology group defined by a $\Delta$-complex structure 
on a space $X$ coincides with the (relative) (co)homology group of $X$
(cf. \cite{Hatcher:2002}).

\subsection{Singular Euclidean structure on $\tilde{M}_{q_0}$}
Let $x_0=(M_0,f_0)\in \teich_{g,m}$
and $q_0\in \mathcal{Q}_{x_0}$.
Consider a $\Delta$-complex structure $\Delta$ on $M_0$
such that the $0$-skeleton $\Delta^{(0)}$ contains $\Sigma (q_0)$,
each $1$-simplex is a straight segment
with respect to the $|q_0|$-metric,
and each $2$-simplex is a non-degenerate triangle.
Such a $\Delta$-complex exists.
For instance,
we can take it as a refinement (subdivision) of
the Delaunay triangulation with respect to the singularities of $q_0$
(cf. \cite[\S4]{Masur_Smillie:1991}).
Let $\tilde{\Delta}$ be the lift of $\Delta$.
$\tilde{\Delta}$ is a $\Delta$-complex structure on $\tilde{M}_{q_0}$.
The covering transformation $i_{q_{0}}$ acts on
the $1$-chain group $C_1(\tilde{\Delta},\tilde{\Sigma}_{ub}(q_{0}),\mathbb{R})$.

We define $\chainclass{u}[q_0]\in 
{\rm Hom}(C_1(\tilde{\Delta},\tilde{\Sigma}_{ub}(q_{0}))^-,\mathbb{C})$
by
$$
\chainclass{u}[q_0](e)=\int_e\omega_{q_0}
$$
for $e\in C_1(\tilde{\Delta},\Sigma_{ub}(q_{0}))^-$.
The restriction of $\chainclass{u}[q_0]$ 
to the cycles $Z_1(\tilde{\Delta},\Sigma_{ub}(q_{0}),\mathbb{R})^-$
descends to
a homomorphism $\cohomologyclass{u}[q_0]\in \homorel(q_0)$
such that $\Phi_0(q_0)=\cohomologyclass{u}[q_0]$
(cf. Proposition \ref{prop:local-coordinateQgm}).

\subsection{Piecewise affine deformations}
\label{subsec:small-deformation}
Let $\sigma$ be a $2$-simplex in $\Delta$.
Let $\partial \sigma=e_1+e_2+e_3$ as $1$-chains.
The developing mapping $\sigma\ni p\mapsto z(p)=\int^p \omega_{q_0}$ maps $\sigma$
to a Euclidean triangle $\sigma'$ in the complex plane $\mathbb{C}$
with (oriented) edges $\chainclass{u}[q_0](e_i)$.
Notice that $dz=\omega_{q_0}$ on $\sigma'$
(cf. Figure \ref{fig:Def-triangle}).
\begin{figure}
\centering
\includegraphics[width=5cm]{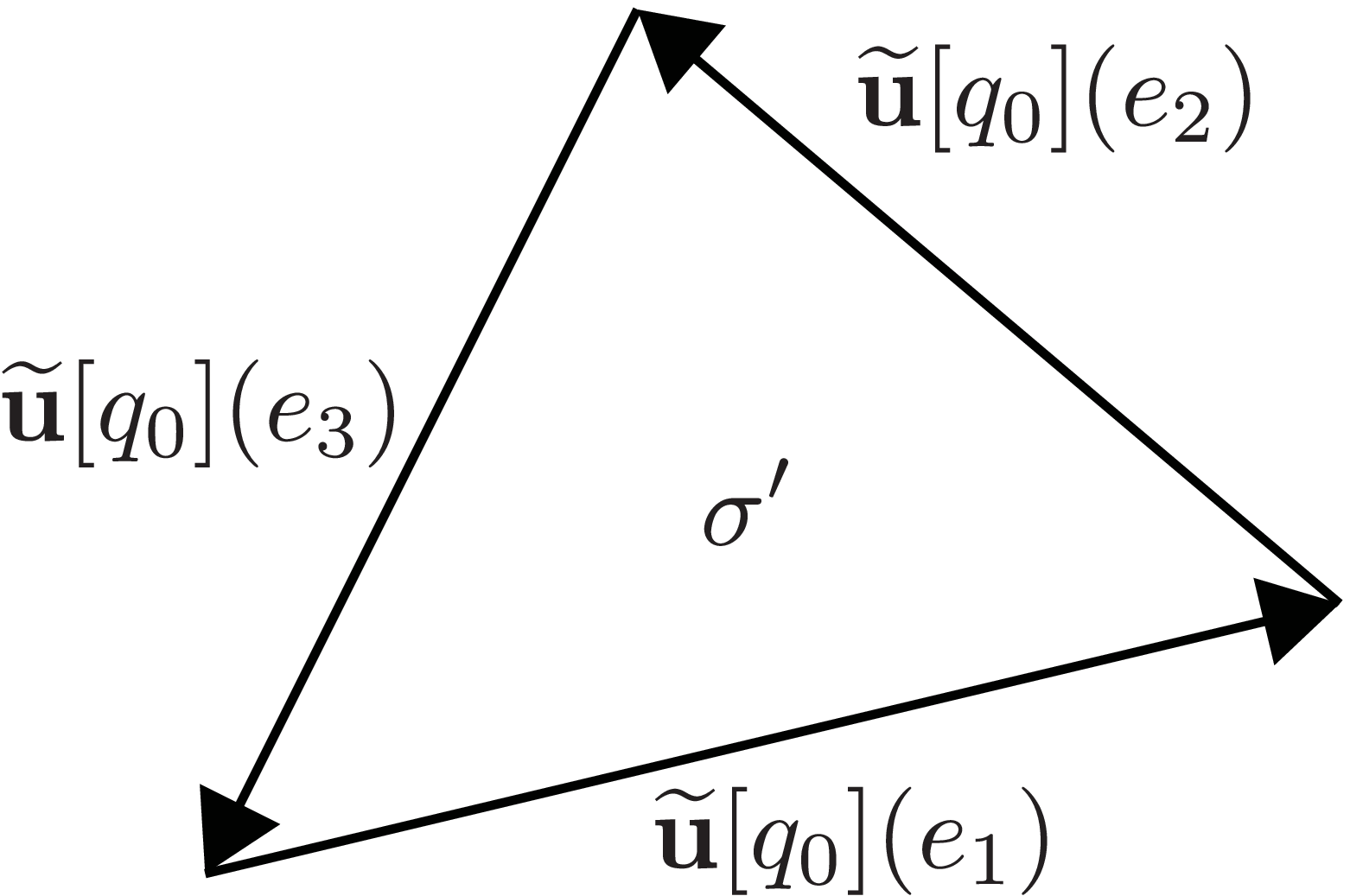}
\caption{Euclidean triangle from $\omega_{q_0}$.}
\label{fig:Def-triangle}
\end{figure}

For $\cohomologyclass{v}\in 
\homorel(q_0)\cong T_{q_0}\mathcal{Q}(\boldsymbol{\pi}(q_0))$,
the infinitesimal deformation along $\cohomologyclass{v}$
of the singular Euclidean structure associated to $q_0$
is described by an assortment of the affine deformation of the triangle $\sigma$
along the lift $\chainclass{v}\in 
{\rm Hom}(C_1(\tilde{\Delta},\tilde{\Sigma}_{ub}(q_{0}),\mathbb{R})^-,\mathbb{C})$ of $\cohomologyclass{v}$.
Here,
we define
the lift $\chainclass{v}$ as follows:
We first take the pullback of $\cohomologyclass{v}$
on $Z_1(\tilde{\Delta},\tilde{\Sigma}_{ub}(q_{0}),\mathbb{R})^-$
by precomposing the projection
from $Z_1(\tilde{\Delta},\tilde{\Sigma}_{ub}(q_{0}),\mathbb{R})^-$
to 
$H_1(\tilde{M}_{q_0},\tilde{\Sigma}_{ub}(q_{0}),\mathbb{R})^-$,
and
set $\chainclass{v}\equiv 0$ on a complementary space of 
$Z_1(\tilde{\Delta},\tilde{\Sigma}_{ub}(q_{0}),\mathbb{R})^-$
in $C_1(\tilde{\Delta},\tilde{\Sigma}_{ub}(q_{0}),\mathbb{R})^-$.


To be more precise,
fix a norm on
${\rm Hom}(C_1(\tilde{\Delta},\tilde{\Sigma}_{ub}(q_{0}),\mathbb{R})^-,\mathbb{C})$.
Since each $2$-simplex of $\Delta$ is a non-degenerate triangle,
vectors
$\{(\chainclass{u}[q_0]+\chainclass{v})(e_i)\}_{i=1}^3$
also span a non-degenerate triangle when
$\chainclass{v}$ is sufficiently short.
Collecting such new triangles defined from all $2$-simplices of $\tilde{\Delta}$,
and gluing them according to the combinatorial structure $\tilde{\Delta}$,
we get a new singular Euclidean surface $\tilde{M}_{q_0}[\cohomologyclass{v}]$
which is homeomorphic to $\tilde{M}_{q_0}$ by a piecewise affine mapping
$\tilde{F}_{\cohomologyclass{v}}\colon \tilde{M}_{q_0}\to 
\tilde{M}_{q_0}[\cohomologyclass{v}]$
defined by assembling the affine deformations on the $2$-simplices of $\tilde{\Delta}$.
Since $i_{q_0}^*(\chainclass{v})=-\chainclass{v}$,
$\tilde{M}_{q_0}[\cohomologyclass{v}]$
admits an involution $i_{q_0}[\cohomologyclass{v}]$
satisfies $i_{q_0}[\cohomologyclass{v}]\circ \tilde{F}_{\cohomologyclass{v}}
=\tilde{F}_{\cohomologyclass{v}}\circ i_{q_0}$,
and
the piecewise affine mapping $\tilde{F}_{\cohomologyclass{v}}$ descends to
a quasiconformal mapping (a piecewise affine mapping)
$F_{\cohomologyclass{v}}$ on $M_0$
to a Riemann surface $M_0[\cohomologyclass{v}]$.
The surface $\tilde{M}_{q_0}[\cohomologyclass{v}]$ admits a $\Delta$-complex structure $\tilde{\Delta}[\cohomologyclass{v}]$
inherited form $\tilde{\Delta}$ on $\tilde{M}_{q_0}$
which is equivariant under the action of the involution $i_{q_0}[\cohomologyclass{v}]$.
The $\Delta$-complex structure $\tilde{\Delta}[\cohomologyclass{v}]$
descends to a $\Delta$-complex structure $\Delta[\cohomologyclass{v}]$
on $M_0[\cohomologyclass{v}]$.
%

Denote by $w$ the flat coordinate for $\tilde{M}_{q_0}[\cohomologyclass{v}]$
(defined on each $2$-simplex of $\tilde{\Delta}[\cohomologyclass{v}]$).
The holomorphic $1$-form $dw$ on each $2$-simplex for 
$\tilde{M}_{q_0}[\cohomologyclass{v}]$
defines a holomorphic $1$-form $\boldsymbol{\omega}_{q_0}[\cohomologyclass{v}]$
on $\tilde{M}_{q_0}[\cohomologyclass{v}]$.
The square $\boldsymbol{\omega}_{q_0}[\cohomologyclass{v}]^2$
descends to a holomorphic quadratic differential
$\quaddiff{q_0}{\cohomologyclass{v}}$
on $M_0[\cohomologyclass{v}]$.

Let $x_{\cohomologyclass{v}}=(M_0[\cohomologyclass{v}],F_{\cohomologyclass{v}}\circ f_0)\in \teich_{g,m}$.
Since $\tilde{F}_{\cohomologyclass{v}}$ sends the vertices of $\tilde{\Delta}$ to
the vertices of $\tilde{\Delta}[\cohomologyclass{v}]$,
we can see that
$\quaddiff{q_0}{\cohomologyclass{v}}\in \mathcal{Q}(\boldsymbol{\pi}(q_0))
\cap \mathcal{Q}_{x_{\cohomologyclass{v}}}$ when $\chainclass{v}$ is sufficiently short,
$\quaddiff{q_0}{0}=q_0$
and
\begin{align}
\label{eq:infinitesimal-1}
\cohomologyclass{u}[\quaddiff{q_0}{\cohomologyclass{v}}](C)=
\int_{(\tilde{F}_{\cohomologyclass{v}})_*(C)}\boldsymbol{\omega}_{q_0}[\cohomologyclass{v}]
=(\cohomologyclass{u}[q_0]+\cohomologyclass{v})(C)
\end{align}
for all $C\in H_1(\tilde{M}_{q_0},\tilde{\Sigma}_{ub},\mathbb{R})^-$.
Summarizing the above argument,
we get a commuting diagram
\begin{equation*}
\label{eq:commute-diagram}
\begin{CD}
\tilde{M}_{q_0} @>{\tilde{F}_{\cohomologyclass{v}}}>>
\tilde{M}_{\quaddiff{q_0}{\cohomologyclass{v}}} \cong \tilde{M}_{q_0}[\cohomologyclass{v}] \\
@VVV @VVV \\
M_0 @>{F_{\cohomologyclass{v}}}>>M_0[\cohomologyclass{v}]
\end{CD}
\end{equation*}
when the lift $\chainclass{v}\in 
{\rm Hom}(C_1(\tilde{\Delta},\tilde{\Sigma}_{ub}(q_{0}),\mathbb{R})^-,\mathbb{C})$ of $\cohomologyclass{v}\in \homorel(q_0)\cong T_{q_0}\mathcal{Q}(\boldsymbol{\pi}(q_0))$ is sufficiently short,
where the vertical directions are double-branched coverings 
with covering involutions $i_{q_0}$ and $i_{q_0}[\cohomologyclass{v}]$.

\subsection{Teichm{\"u}ller deformations}
\label{subsec:Teichmuller-deformation}
Let $x_0=(M_0,f_0)\in \teich_{g,m}$
and $q_0\in \mathcal{Q}_{x_0}$.
%
For $t\ge 0$,
let $h_t\colon M_0\to M_t$ be the Teichm{\"u}ller mapping
associated to the Beltrami differential $\tanh(t)\overline{q_0}/|q_0|$
(cf. \S\ref{subsubsec:Teichmuller-homeo}).
Let $x_{t,q_0}=(M_t,h_t\circ f_0)\in \teich_{g,m}$ and 
Let $\terminaldiff{t}{q_0}\in \mathcal{Q}_{x_{t,q_0}}$ be the terminal differential
(e.g. \cite{Imayoshi_Taniguchi:1992}).
For our purpose, we assume that the Teichm{\"u}ller mapping $h_t$ is represented as an affine mapping associated to the matrix
$\begin{pmatrix} 1 & 0 \\ 0 & e^{-2t} \end{pmatrix}$
in terms of the natural coordinates (distinguished parameters) of the initial and terminal differentials (cf. \cite[Chapter II]{Strebel:1984}).
In particular, $\|\terminaldiff{t}{q_0}\|=e^{-2t}\|q_0\|$ and $\verticalfoliation{\terminaldiff{t}{q_0}}=\verticalfoliation{q_0}$ from our assumption (cf. \cite[Lemma 4.3]{Hubbard_Masur:1979} and \eqref{eq:teichmuller-deformation-homomorphism1} below).
It is known that $\terminaldiff{t}{q_0}\in \mathcal{Q}(\boldsymbol{\pi}(q_0))$ for $t\ge 0$ (e.g. \cite{Masur:1982}).

The Teichm{\"u}ller mapping $h_t$ lifts as a quasiconformal mapping
$\tilde{h}_t\colon \tilde{M}_{q_0}\to \tilde{M}_{q_{t,q_0}}$
which is equivariant under the action of the involutions.
The lift gives the identification
$H_1(\tilde{M}_{\terminaldiff{t}{q_0}},\tilde{\Sigma}_{ub}(\terminaldiff{t}{q_0}),\mathbb{R})^-
\cong H_1(\tilde{M}_{q_0},\tilde{\Sigma}_{ub}(q_0),\mathbb{R})^-$.
By the analytic continuation along a continuous path $t\mapsto \terminaldiff{t}{q_0}\in \mathcal{Q}(\boldsymbol{\pi}(q_0))$ from initial point $q_0$, the chart given in Proposition \ref{prop:local-coordinateQgm} extends a neighborhood of the path.
The image of $\terminaldiff{t}{q_0}$ by the chart satisfies
\begin{align}
\label{eq:teichmuller-deformation-homomorphism1}
\cohomologyclass{u}[\terminaldiff{t}{q_0}](C)
&
=
{\rm Re}\left(
\cohomologyclass{u}[q_{0}](C)
\right)
+\sqrt{-1}e^{-2t}
{\rm Im}\left(
\cohomologyclass{u}[q_{0}](C)
\right)
\\
&=
\frac{1+e^{-2t}}{2}\cohomologyclass{u}[q_{0}](C)
+
\frac{1-e^{-2t}}{2}\overline{\cohomologyclass{u}[q_{0}](C)}
\nonumber
\end{align}
for $C\in H_1(\tilde{M}_{q_0},\tilde{\Sigma}_{ub}(q_0),\mathbb{R})^-$.
 
\subsection{Piecewise affine deformations around Teichm{\"u}ller geodesics}
Let $\cohomologyclass{v}\in \homorel(q_0)\cong T_{q_0}\mathcal{Q}(\boldsymbol{\pi}(q_0))$
and $t>0$.
When the lift $\chainclass{v}$ is sufficiently short as \S\ref{subsec:small-deformation},
we defined
$x_{\cohomologyclass{v}}\in \teich_{g,m}$
and $\quaddiff{q_0}{\cohomologyclass{v}}\in
\mathcal{Q}_{x_\cohomologyclass{v}}$
associated to $\cohomologyclass{v}$
(cf. \S\ref{subsec:small-deformation}).
Consider the Teichm{\"u}ller deformation
on $x_{\cohomologyclass{v}}$ associated to
$\quaddiff{q_0}{\cohomologyclass{v}}$ and $t$.
From the discussion in \S\ref{subsec:Teichmuller-deformation}, the chart in Proposition \ref{prop:local-coordinateQgm} is defined around the terminal differential $\terminaldiff{t}{\quaddiff{q_0}{\cohomologyclass{v}}}$
when the lift of $\cohomologyclass{v}$ is sufficiently short.

From \eqref{eq:infinitesimal-1} and \eqref{eq:teichmuller-deformation-homomorphism1},
$\terminaldiff{t}{\quaddiff{q_0}{0}}
=
\terminaldiff{t}{q_0}$,
$\terminaldiff{0}{\quaddiff{q_0}{\cohomologyclass{v}}}
=
\quaddiff{q_0}{\cohomologyclass{v}}$
and
\begin{align}
\cohomologyclass{u}[\terminaldiff{t}{\quaddiff{q_0}{\cohomologyclass{v}}}](C)
&=
{\rm Re}\left(
(\cohomologyclass{u}[q_{0}]+\cohomologyclass{v})(C)
\right)
+\sqrt{-1}e^{-2t}
{\rm Im}\left(
\cohomologyclass{u}[q_{0}]+\cohomologyclass{v})(C)
\right)
\label{eq:teichmuller-deformation-homomorphism4}
\end{align}
for $t\ge 0$
and
$C\in H_1(\tilde{M}_{q_0},\tilde{\Sigma}_{ub}(q_0),\mathbb{R})^-$
when the lift $\chainclass{v}$
of $\cohomologyclass{v}\in \homorel(q_0)$ is sufficiently short.

%
\subsection{The hypercohomology group}
\label{subsubsec:Hypercohomology-group}
Following Hubbard-Masur \cite{Hubbard_Masur:1979},
we recall the description of the holomorphic
tangent space $T_{q}\mathcal{Q}_{g,m}$
at $q\in \mathcal{Q}_{g,m}$
as the first hypercohomology group $\mathbb{H}^{1}(L^{\bullet})$
a complex of sheaves
(cf. \cite{Godement:1958} or \cite{Griffiths_Harris:1978}).
We will need the Kodaira-Spencer identification of the
tangent space of Teichm{\"u}ller space
with the first cohomology group of the sheaf of holomorphic vector fields
(for instance, see \cite{Kodaira:1986}. See also \cite{Imayoshi_Taniguchi:1992} and \cite{Kawamata:1978}).

Let $X$ and $q$
be a holomorphic vector field 
and a holomorphic quadratic differential
on an open set of a Riemann surface $M$.
Denote by $L_{X}q$ the Lie derivative of $q$ along $X$.
Let $\Theta_{M}$ and $\Omega_{M}^{\otimes 2}$ be the sheaves of germs of
holomorphic vector fields with zeroes at marked points
and meromorphic quadratic differentials on $M$ with (at most)
first order poles at marked points,
respectively.

Let $q_0\in \mathcal{Q}_{g,m}$
and $x_0=(M_0,f_0)\in \teich_{g,m}$ with $q_0\in \mathcal{Q}_{x_0}$
($q_{0}$ need not to be generic).
The tangent space $T_{q_{0}}\mathcal{Q}_{g,m}$
is identified with the 
first hypercohomology group of the complex of sheaves
$$
\begin{CD}
L^{\bullet}\colon\quad
0 @>>> \Theta_{M_0} @>{L_{\cdot}q_{0}}>> \Omega_{M_0}^{\otimes 2}@>>>0.
\end{CD}
$$
The first cochain group is the direct sum
$C^{0}(M_0,\Omega_{M_0}^{\otimes 2})\oplus C^{1}(M_0,\Theta_{M_0})$.
Consider an appropriate covering $\mathcal{U}=\{U_{i}\}_{i}$ on $M_0$
such that $\mathbb{H}^{1}(L^{\bullet})\cong \mathbb{H}^{1}(\mathcal{U},L^{\bullet})$
(see the proof of \cite[Proposition 4.5]{Hubbard_Masur:1979}).

A cochain $(\{\phi_{i}\}_{i},\{X_{ij}\}_{i,j})$
in $C^{0}(\mathcal{U},\Omega_{M_0}^{\otimes 2})\oplus
C^{1}(\mathcal{U},\Theta_{M_0})$
is said to be a \emph{cocycle} if
it satisfies
\begin{equation}
\label{eq:hypercohomology1}
\delta\{X_{ij}\}_{i,j}=X_{ij}+X_{jk}+X_{ki}=0,\quad
\delta\{\phi_{i}\}_{i}=\phi_{i}-\phi_{j}=L_{X_{ij}}(q_{0}).
\end{equation}
A \emph{coboundary} is a cochain $(\{\phi_{i}\}_{i},\{X_{ij}\}_{i,j})$
of the form
\begin{equation}
\label{eq:hypercohomology2}
X_{ij}=Z_{i}-Z_{j}=\delta\{Z_{i}\}_{i},\quad
\phi_{i}=L_{Z_{i}}(q_{0})
\end{equation}
for some $0$-cochain $\{Z_{i}\}_{i}\in C^{0}(\mathcal{U},\Theta_{M_0})$
(cf. Figure \ref{fig:hypercohomology}).
\begin{figure}[t]
\centering
$$
\begin{CD}
0
\\
@AAA \\
C^{0}(\mathcal{U},\Omega_{M_0}^{\otimes 2})
@>{\delta}>>
C^{1}(\mathcal{U},\Omega_{M_0}^{\otimes 2}) \\
@A{L_{\cdot}q_{0}}AA
@A{-L_{\cdot}q_{0}}AA
\\
C^{0}(\mathcal{U},\Theta_{M_0})
@>{\delta}>>
C^{1}(\mathcal{U},\Theta_{M_0})
@>{\delta}>>
C^{2}(\mathcal{U},\Theta_{M_0})
\end{CD}
$$
%
\caption{Double complex for the tangent spaces to $\mathcal{Q}_{g,m}$}
\label{fig:hypercohomology}
\end{figure}

For the hypercohomology class $[(\{\phi_{i}\}_{i},\{X_{ij}\}_{i,j})]
\in \mathbb{H}^{1}(L^{\bullet})$,
when the Kodaira-Spencer class of the 
$1$-cochain $\{X_{ij}\}_{i,j}$ is trivial in $H^1(M_0,\Theta_{M_0})$,
the hypercohomology class $[(\{\phi_{i}\}_{i},\{X_{ij}\}_{i,j})]$
is associated to a holomorphic quadratic differential on $M_0$.
Indeed,
from \eqref{eq:hypercohomology1} and \eqref{eq:hypercohomology2},
$$
\phi_{i}-\phi_{j}=L_{X_{ij}}(q_{0})=L_{Z_i}(q_0)-L_{Z_j}(q_0)
$$
and $\{\phi_{i}-L_{Z_i}(q_0)\}_i$ defines a holomorphic quadratic differential
on $M_0$.

\subsection{Homomorphisms and hypercohomology classes}
From Proposition \ref{prop:local-coordinateQgm},
we have a canonical inclusion
\begin{equation}
\label{eq:homo-hypercohomology}
\homorel(q_0)\cong T_{q_0}\mathcal{Q}(\boldsymbol{\pi}(q))
\hookrightarrow
T_{q_0}\mathcal{Q}_{g,m}\cong
\mathbb{H}^{1}(L^{\bullet}).
\end{equation}
Let $\cohomologyclass{v}\in \homorel(q_0)$
and $[(\{\phi_{i}\}_{i},\{X_{ij}\}_{i,j})]\in \mathbb{H}^{1}(L^{\bullet})$
the corresponding hypercohomology class via \eqref{eq:homo-hypercohomology}.
Take a $0$-cochain
$\{X_{i}\}_{i}$ of the sheaf
of $C^{\infty}$-vector fields such that
$X_{i}-X_{j}=X_{ij}$ on $U_{i}\cap U_{j}$,
and
each $X_i$ vanishes at any marked point of $M_0$.
The $1$-cochain $\{X_{ij}\}_{i,j}$
defines a holomorphic tangent vector at $x_0$
associated to the infinitesimal Beltrami differential
$-(X_i)_{\overline{z}}$ on $M_0$
(cf. \cite[(3.6)]{Miyachi:2017}).
The minus sign comes from our ``$i,j$-convention"
in the definition of the hypercohomology
(compare with Equation $(7.27)$ in \cite[\S7.2.4]{Imayoshi_Taniguchi:1992}).
The holomorphic tangent vector from the $1$-cochain $\{X_{ij}\}_{i,j}$
coincides with the image of $\cohomologyclass{v}\in \homorel(q_0)$
($\hookrightarrow T_{q_0}\mathcal{Q}_{g,m}$)
via the differential of the projection $\mathcal{Q}_{g,m}\to \teich_{g,m}$.

After choosing the covering $\mathcal{U}=\{U_i\}_i$ appropriately,
the right and left sides of the inclusion \eqref{eq:homo-hypercohomology} is 
related to the following formula:
\begin{equation}
\label{eq:differential-cohomology1}
\cohomologyclass{v}(C)=\int_C\Omega[q_0,\cohomologyclass{v}]
\end{equation}
for $C\in H_1(\tilde{M}_{q_0},\tilde{\Sigma}_{ub}(q_0),\mathbb{R})^-$,
where $\Omega[q_0,\cohomologyclass{v}]$ is a $C^\infty$-closed $1$-form
on $\tilde{M}_{q_0}$ defined by
\begin{equation}
\label{eq:differential-cohomology2}
\boldsymbol{\Omega}[q_0,\cohomologyclass{v}]
=
\left(
\frac{\tilde{\phi}_{i}}{2\omega_{q_{0}}}-\omega_{q_{0}}'\tilde{X}_{i}-\omega_{q_{0}}(\tilde{X}_{i})_{z}
\right)dz
-\omega_{q_{0}}
(\tilde{X}_{i})_{\overline{z}}d\overline{z}
\end{equation}
on $U_i$,
and tildes in \eqref{eq:differential-cohomology2}
mean objects (differentials or vector fields etc.)
on the covering space $\tilde{M}_{q_{0}}$
which obtained as lifts of objects on $M_{0}$.
For a proof, see e.g. \cite[Lemma 3.1]{Miyachi:2017}.
Actually,
in \cite[Lemma 3.1]{Miyachi:2017},
we discuss only in the case where $q_0$ is generic.
However, we can deduce \eqref{eq:differential-cohomology2}
the same argument
since we consider deformations of holomorphic
quadratic differentials along strata
as seen in the discussion in \cite[Lemma 5.6]{Dumas:2015}.

\begin{remark}
\label{remark:1}
We notice the following:
\begin{enumerate}
\item
Fix $\epsilon>0$ sufficiently small.
Then,
$\phi_i$ is the $\lambda$-derivative
of the infinitesimal deformation 
of a holomorphic mapping
$\{|\lambda|<\epsilon\}\ni \lambda\mapsto 
\quaddiff{q_0}{\lambda\cohomologyclass{v}}\in \mathcal{Q}(\boldsymbol{\pi}(q_0))$
on $U_i$ at $\lambda=0$
(e.g. \cite[\S3.3]{Miyachi:2017}).
Since 
$\quaddiff{q_0}{\lambda\cohomologyclass{v}}$ varies in $\mathcal{Q}(\boldsymbol{\pi}(q_0))$,
we can see that $\order_p(\phi_i)\ge \order_p(q_0)-1$
for $p\in \Sigma_{s\setminus m}(q_0)\cap U_i$
and 
$\order_p(\phi_i)\ge \order_p(q_0)$
for $p\in \Sigma_{sm}(q_0)\cap U_i$
(cf. \cite[Lemma 5.2]{Dumas:2015}.
Hence the first term of the coefficient of $dz$ of
the differential $\boldsymbol{\Omega}[q_0,\cohomologyclass{v}]$
in \eqref{eq:differential-cohomology2}
is holomorphic around $\tilde{\Sigma}(q_0)\cap U_i$.
\item
For $\cohomologyclass{v}\in \homorel(q_0)$,
we define the \emph{complex conjugate}
$\overline{\cohomologyclass{v}}\in \homorel(q_0)$
of $\cohomologyclass{v}$
by 
$$
\overline{\cohomologyclass{v}}(C)=
\overline{\cohomologyclass{v}(C)}
$$
for $C\in H_1(\tilde{M}_{q_0},\tilde{\Sigma}_{ub}(q_0),\mathbb{R})^-$.
We can easily deduce from \eqref{eq:differential-cohomology1} that
\begin{equation}
\label{eq:differential-cohomology3}
\int_C\boldsymbol{\Omega}[q_0,\overline{\cohomologyclass{v}}]
=
\int_C\overline{\boldsymbol{\Omega}[q_0,\cohomologyclass{v}]}
\end{equation}
for $C\in H_1(\tilde{M}_{q_0},\tilde{\Sigma}_{ub}(q_0),\mathbb{R})^-$.
Notice that the complex conjugate of $\cohomologyclass{v}$ here
is thought of as a tangent vector
in the $(1,0)$-part in the complexification of the real tangent vector
space at $q_0$.
Compare \cite[Proposition 1.5 in Chapter IX]{Kobayashi_Nomizu:1996}.\qed
\end{enumerate}
\end{remark}

We claim 
the following
(cf. \cite{Douady_Hubbard:1975} and \cite[Lemma 5.6]{Dumas:2015}).

\begin{proposition}
\label{prop:trivial-KS-class}
Let $q_0\in \mathcal{Q}_{g,m}$.
Let $\cohomologyclass{v}\in \homorel(q_0)$
and $[\{\phi_i\}_i,\{X_{ij}\}_{i,j})]$ the corresponding hypercohomology class.
When the Kodaira-Spencer class of $\{X_{ij}\}_{i,j}$ is trivial,
\begin{equation*}
\label{eq:Omega-Xij-trivial}
\boldsymbol{\Omega}[q_0,\cohomologyclass{v}]=
\frac{\pi_{q_0}^*(\psi)}{\omega_{q_0}}
\end{equation*}
for some $\psi\in \mathcal{Q}_{x_0}^T(q_0)$
\end{proposition}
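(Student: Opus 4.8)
The plan is to use the triviality of the Kodaira--Spencer class to represent the hypercohomology class by \emph{holomorphic} data, so that the defining formula \eqref{eq:differential-cohomology2} degenerates into a holomorphic $(1,0)$-form which is visibly a multiple of $\pi_{q_0}^*(\psi)/\omega_{q_0}$, and then to read off the divisor of $\psi$ from the fact that this $1$-form is regular on $\tilde M_{q_0}$.

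First I would unwind the hypothesis. Triviality of the Kodaira--Spencer class of $\{X_{ij}\}_{i,j}$ means that $\{X_{ij}\}_{i,j}$ is a coboundary in $H^1(M_0,\Theta_{M_0})$, so there is a $0$-cochain $\{Z_i\}_i\in C^0(\mathcal{U},\Theta_{M_0})$ of holomorphic vector fields with $X_{ij}=Z_i-Z_j$, exactly as in \eqref{eq:hypercohomology2}; being sections of $\Theta_{M_0}$, the $Z_i$ vanish at the marked points. As recalled right after \eqref{eq:hypercohomology2}, $\{\phi_i-L_{Z_i}(q_0)\}_i$ then glues to a single holomorphic quadratic differential on $M_0$, and I set $\psi:=\tfrac12\bigl(\phi_i-L_{Z_i}(q_0)\bigr)\in\mathcal{Q}_{x_0}$, the factor $\tfrac12$ being chosen to absorb the normalisation that appears below.

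Next I would observe that the smooth fields $\{X_i\}_i$ entering \eqref{eq:differential-cohomology2} are only required to satisfy $X_i-X_j=X_{ij}$ and to vanish at the marked points; replacing one admissible choice by another alters them by a global smooth field $Y$ and changes $\boldsymbol{\Omega}[q_0,\cohomologyclass{v}]$ by the exact form $d(\iota_{\tilde Y}\omega_{q_0})$, so the periods \eqref{eq:differential-cohomology1} are unaffected. Since the holomorphic $Z_i$ are themselves admissible, I take $X_i=Z_i$. Then $(\tilde Z_i)_{\overline z}=0$, the $d\overline z$-term of \eqref{eq:differential-cohomology2} vanishes, and $\boldsymbol{\Omega}[q_0,\cohomologyclass{v}]$ is a $(1,0)$-form. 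Writing $\omega_{q_0}=\omega\,dz$ and $q_0=q_0(z)\,dz^2$ in a lifted local coordinate, so that $\omega^2=\tilde q_0$ and hence $2\omega\omega'=\widetilde{q_0'}$, and inserting the Lie-derivative formula $L_{Z_i}(q_0)=(Z_iq_0'+2q_0Z_i')\,dz^2$, the $dz$-coefficient of \eqref{eq:differential-cohomology2} factors as $\tfrac{1}{2\omega}\bigl(\tilde\phi_i-\widetilde{L_{Z_i}(q_0)}\bigr)$, which is precisely the coefficient of $\pi_{q_0}^*(\psi)/\omega_{q_0}$. This gives the asserted identity $\boldsymbol{\Omega}[q_0,\cohomologyclass{v}]=\pi_{q_0}^*(\psi)/\omega_{q_0}$.

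It remains to place $\psi$ in $\mathcal{Q}^T_{x_0}(q_0)$, and this divisor bookkeeping is where I expect the real work to lie. With the holomorphic choice $X_i=Z_i$, every coefficient in \eqref{eq:differential-cohomology2} is holomorphic: the first term $\tilde\phi_i/(2\omega_{q_0})$ is holomorphic around $\tilde\Sigma(q_0)$ by Remark \ref{remark:1}(1) (which uses that $\lambda\mapsto\quaddiff{q_0}{\lambda\cohomologyclass{v}}$ stays in the stratum $\mathcal{Q}(\boldsymbol{\pi}(q_0))$, forcing $\order_p(\phi_i)\ge\order_p(q_0)-1$ on $\Sigma_{s\setminus m}(q_0)$ and $\order_p(\phi_i)\ge\order_p(q_0)$ on $\Sigma_{sm}(q_0)$), while the remaining terms $-\omega_{q_0}'\tilde Z_i-\omega_{q_0}(\tilde Z_i)_z$ are holomorphic on all of $\tilde M_{q_0}$ because $Z_i$ and $\omega_{q_0}$ are. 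Hence $\pi_{q_0}^*(\psi)/\omega_{q_0}$ is a holomorphic $1$-form on $\tilde M_{q_0}$. By Dumas' description of $\mathcal{Q}^T_{x_0}(q_0)$ (cf. \cite[Lemma 5.2]{Dumas:2015} and Proposition \ref{prop:tangent-space-strata}), the divisor inequality defining $\mathcal{Q}^T_{x_0}(q_0)$ is exactly equivalent to regularity of $\pi_{q_0}^*(\psi)/\omega_{q_0}$ across the preimages of the zeros and poles of $q_0$ on the branched cover; matching the order of vanishing of $\omega_{q_0}$ at each such point against the pole order permitted for $\psi$ therefore yields $\psi\in\mathcal{Q}^T_{x_0}(q_0)$ and completes the proof.
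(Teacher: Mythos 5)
Your route is essentially the paper's: trivialize $\{X_{ij}\}_{i,j}$ by a holomorphic $0$-cochain $\{Z_i\}_i$, substitute $X_i=Z_i$ into \eqref{eq:differential-cohomology2} so that the $d\overline{z}$-term drops out, and recognize the surviving $(1,0)$-form as $\tilde{\psi}_i/(2\omega_{q_0})\,dz$ with $\psi_i=\phi_i-L_{Z_i}(q_0)$; your extra remarks on the independence of the periods from the choice of $\{X_i\}_i$ and on the normalizing factor $\tfrac12$ are correct and slightly more careful than the printed proof. The gap is in your last step. You deduce $\psi\in\mathcal{Q}^T_{x_0}(q_0)$ from the claim that the divisor inequality defining $\mathcal{Q}^T_{x_0}(q_0)$ is \emph{equivalent} to regularity of $\pi_{q_0}^*(\psi)/\omega_{q_0}$ on $\tilde{M}_{q_0}$. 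Only one implication holds: membership in $\mathcal{Q}^T_{x_0}(q_0)$ forces regularity, not conversely. At a non-marked zero $p$ of odd order $k=\order_p(q_0)$, writing $z=w^2$ upstairs, $\omega_{q_0}$ vanishes to order $k+1$ in $w$ while $\pi_{q_0}^*(\psi)$ vanishes to order $2\order_p(\psi)+2$, so the quotient is regular as soon as $\order_p(\psi)\ge (k-1)/2$, whereas $\mathcal{Q}^T_{x_0}(q_0)$ demands $\order_p(\psi)\ge k-1$. For $k=3$ the local differential $\psi=z\,dz^2$ gives a regular quotient but violates the divisor condition; the same discrepancy occurs at even-order zeros of order at least $4$. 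So regularity of the $1$-form is strictly weaker than what you need to conclude.

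The repair is exactly the pointwise bookkeeping the paper performs (and whose ingredients you quote but do not use): Remark \ref{remark:1}(1) gives $\order_p(\phi_i)\ge\order_p(q_0)-1$ on $\Sigma_{s\setminus m}(q_0)$ and $\order_p(\phi_i)\ge\order_p(q_0)$ on $\Sigma_{sm}(q_0)$, and since each $Z_i\in\Theta_{M_0}$ is holomorphic and vanishes at marked points, $L_{Z_i}(q_0)=(q_0'Z_i+2q_0Z_i')\,dz^2$ obeys the same lower bounds. Hence the difference $\psi_i=\phi_i-L_{Z_i}(q_0)$ itself satisfies the divisor inequality defining $\mathcal{Q}^T_{x_0}(q_0)$ at every singular point, with no detour through the covering surface. (Equivalently, if you insist on arguing upstairs, you must track the actual order of vanishing of $\boldsymbol{\Omega}[q_0,\cohomologyclass{v}]$ at a branch point --- order at least $k-1$ in $w$, which does translate back to $\order_p(\psi)\ge k-1$ --- rather than mere regularity.)
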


\begin{proof}
The assumption implies that there is a $0$-cochain
$\{Z_i\}_i\in C^0(\mathcal{U},\Theta_{M_0})$ such that
$Z_i-Z_j=X_{ij}$.
From \eqref{eq:differential-cohomology2}
we have
\begin{equation*}
\boldsymbol{\Omega}[q_0,\cohomologyclass{v}]
=
\left(
\frac{\tilde{\phi}_{i}}{2\omega_{q_{0}}}-
\omega_{q_{0}}'\tilde{Z}_{i}-\omega_{q_{0}}
\tilde{Z}'_{i}
\right)dz
=
\frac{\tilde{\psi}_i}{2\omega_{q_{0}}}
dz
\end{equation*}
where
\begin{equation}
\label{eq:psi-1}
\psi_i=\phi_i-L_{Z_i}(q_0)=\phi_i-(q_0'Z_i+2q_0Z_i').
\end{equation}
As discussed in the last paragraph of \S\ref{subsubsec:Hypercohomology-group},
$\{\psi_i\}_i$ defines a holomorphic quadratic differential $\psi$ on $M_0$.
We can check from (1) in Remark \ref{remark:1}
and \eqref{eq:psi-1} that $\psi\in \mathcal{Q}_{x_0}^T(q_0)$.
\end{proof}

Let $x_0=(M_0,f_0)\in \teich_{g,m}$.
Suppose $q_0\in \mathcal{Q}_{x_0}$ is generic.
The projection $\projQ\colon \mathcal{Q}_{g,m}\to \teich_{g,m}$ induces a complex linear map
$$
T_{q_0}\mathcal{Q}_{g,m}\cong \homorel(q_0)\ni \cohomologyclass{v}\mapsto
\tvector{\cohomologyclass{v}}{q_0}:=d\projQ\mid_{q_0}[\cohomologyclass{v}]\in T_{x_0}\teich_{g,m}
$$ 
via the differential.
%

\begin{proposition}[Hodge-Kodaira decomposition]
\label{prop:hodge-kodaira-decomposition}
Under the above notation, we have
$$
\cohomologyclass{v}(C)
=
\int_C\frac{
\pi_{q_0}^*(\eta_{\tvector{\overline{\cohomologyclass{v}}}{q_0}})}{\omega_{q_0}}+
\int_C\overline{
\left(
\frac{\pi_{q_0}^*(\eta_{\tvector{\cohomologyclass{v}}{q_0}})}{\omega_{q_0}}
\right)}
$$
for $C\in H_1(\tilde{M}_{q_0},\mathbb{R})^-=
H_1(\tilde{M}_{q_0},\tilde{\Sigma}_{ub}(q_0),\mathbb{R})^-$.
\end{proposition}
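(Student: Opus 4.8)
The plan is to realize both sides as elements of $H^1(\tilde{M}_{q_0},\mathbb{C})^-$ and to match them through the cup product pairing on the closed surface $\tilde{M}_{q_0}$. By \eqref{eq:differential-cohomology1} the left-hand side is the period of the closed $C^\infty$ $1$-form $\boldsymbol{\Omega}[q_0,\cohomologyclass{v}]$ over cycles in $H_1(\tilde{M}_{q_0},\mathbb{R})^-$, and one checks directly from \eqref{eq:differential-cohomology2} that $\boldsymbol{\Omega}[q_0,\cohomologyclass{v}]$ is anti-invariant under $i_{q_0}^*$, so it represents a class in $H^1(\tilde{M}_{q_0},\mathbb{C})^-$; since $q_0$ is generic, $\tilde{\Sigma}_{ub}(q_0)=\emptyset$ and this relative homology is the absolute one. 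First I would record the standard fact that, for generic $q_0$, the assignment $\psi\mapsto \pi_{q_0}^*(\psi)/\omega_{q_0}$ is a $\mathbb{C}$-linear isomorphism from $\mathcal{Q}_{x_0}=\mathcal{Q}_{x_0}^T(q_0)$ onto the space $H^0(\tilde{M}_{q_0},\Omega)^-$ of anti-invariant holomorphic $1$-forms (well-definedness and injectivity by a local check at the simple zeros and simple poles of $q_0$, surjectivity by the dimension count $\tilde{g}(q_0)-g=3g-3+m=\dim_{\mathbb{C}}\mathcal{Q}_{x_0}$). By Hodge theory the class of any closed $1$-form splits uniquely into holomorphic and anti-holomorphic parts, and on the anti-invariant eigenspace this reads $H^1(\tilde{M}_{q_0},\mathbb{C})^-=H^0(\tilde{M}_{q_0},\Omega)^-\oplus\overline{H^0(\tilde{M}_{q_0},\Omega)^-}$; hence it suffices to identify the two parts of $[\boldsymbol{\Omega}[q_0,\cohomologyclass{v}]]$ with $\pi_{q_0}^*(\eta_{\tvector{\overline{\cohomologyclass{v}}}{q_0}})/\omega_{q_0}$ and $\overline{\pi_{q_0}^*(\eta_{\tvector{\cohomologyclass{v}}{q_0}})/\omega_{q_0}}$ respectively.

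Because the cup product pairing is non-degenerate and $(1,0)\wedge(1,0)=0$ on a surface, wedging against a holomorphic form $\gamma_\psi:=\pi_{q_0}^*(\psi)/\omega_{q_0}$ detects only the anti-holomorphic part. Thus the core of the argument is the single computation of $\int_{\tilde{M}_{q_0}}\boldsymbol{\Omega}[q_0,\cohomologyclass{v}]\wedge\gamma_\psi$ for all $\psi\in\mathcal{Q}_{x_0}$: only the $(0,1)$-part $-\omega_{q_0}(\tilde{X}_i)_{\overline z}\,d\overline z$ of \eqref{eq:differential-cohomology2} survives, in a local $|q_0|$-coordinate this wedge simplifies to an involution-invariant $2$-form with coefficient $(\tilde{X}_i)_{\overline z}\psi$, so the integral over $\tilde{M}_{q_0}$ is twice the corresponding integral over $M_0$. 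Recalling from the paragraph following \eqref{eq:homo-hypercohomology} that $\tvector{\cohomologyclass{v}}{q_0}$ is represented by the Beltrami differential $-(X_i)_{\overline z}$, this integral is a constant multiple of the canonical pairing $\langle\tvector{\cohomologyclass{v}}{q_0},\psi\rangle$. On the other hand, using \eqref{eq:Hermite-form-cover} together with the defining relation \eqref{eq:anti-complex-eta} of the $q_0$-realization, the value $\int_{\tilde{M}_{q_0}}\overline{\gamma_{\eta_{\tvector{\cohomologyclass{v}}{q_0}}}}\wedge\gamma_\psi$ equals the same multiple of $\langle\tvector{\cohomologyclass{v}}{q_0},\psi\rangle$. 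Non-degeneracy of the Hodge inner product then forces the anti-holomorphic part of $[\boldsymbol{\Omega}[q_0,\cohomologyclass{v}]]$ to be exactly $\overline{\gamma_{\eta_{\tvector{\cohomologyclass{v}}{q_0}}}}$.

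Finally I would obtain the holomorphic part for free by conjugation. Applying the identification just proved to $\overline{\cohomologyclass{v}}$ and invoking \eqref{eq:differential-cohomology3}, which says that $\boldsymbol{\Omega}[q_0,\overline{\cohomologyclass{v}}]$ and $\overline{\boldsymbol{\Omega}[q_0,\cohomologyclass{v}]}$ have the same periods on $H_1(\tilde{M}_{q_0},\mathbb{R})^-$, shows that the holomorphic part of $[\boldsymbol{\Omega}[q_0,\cohomologyclass{v}]]$ is $\gamma_{\eta_{\tvector{\overline{\cohomologyclass{v}}}{q_0}}}$; summing the two parts yields the asserted formula. I expect the main obstacle to be the bookkeeping in the core computation of the middle paragraph: tracking the factor $2$ from the double cover $\pi_{q_0}$, the constant $\sqrt{-1}/4$ in \eqref{eq:Hermite-form-cover}, and the interplay among complex conjugation, the anti-complex-linearity of $v\mapsto\eta_v$, and the placement of the conjugate in \eqref{eq:anti-complex-eta}, so that the two expressions match $\langle\tvector{\cohomologyclass{v}}{q_0},\psi\rangle$ on the nose rather than up to a spurious constant or conjugate.
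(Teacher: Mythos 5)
Your proposal is correct and takes essentially the same route as the paper's proof: both identify the anti-holomorphic part of the Hodge decomposition of the class of $\boldsymbol{\Omega}[q_0,\cohomologyclass{v}]$ by pairing against the anti-invariant holomorphic forms $\pi_{q_0}^*(\psi)/\omega_{q_0}$, using that the $(0,1)$-part of \eqref{eq:differential-cohomology2} is the lift of the Beltrami representative of $\tvector{\cohomologyclass{v}}{q_0}$ together with the defining relation \eqref{eq:anti-complex-eta} of the $q_0$-realization and the non-degeneracy of the resulting pairing. The holomorphic part is then recovered exactly as in the paper, via the conjugation relation \eqref{eq:differential-cohomology3} and the uniqueness of the harmonic representative.
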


\begin{proof}
From the definition of the $q_0$-realizations,
for $\phi\in \mathcal{Q}_{x_0}$,
\begin{align}
\int_{M_0}\mu\phi
&=\int_{M_0}\frac{\overline{\eta_{\tvector{\cohomologyclass{v}}{q_0}}}}{|q_0|}\phi
=\frac{1}{2}
\int_{\tilde{M}_{q_0}}
\frac{\overline{
\pi_{q_0}^*\left(
\eta_{\tvector{\cohomologyclass{v}}{q_0}}\right)}}{|\omega_{q_0}|^2}\pi_{q_0}^*(\phi)
\label{eq:dHKdec1}
\\
&=-\frac{\sqrt{-1}}{4}
\int_{\tilde{M}_{q_0}}
\overline{
\left(\frac{\pi_{q_0}^*\left(
\eta_{\tvector{\cohomologyclass{v}}{q_0}}\right)}{\omega_{q_0}}
\right)}
\wedge \frac{\pi_{q_0}^*(\phi)}{\omega_{q_0}}.
\nonumber
\end{align}
Let $\boldsymbol{\Omega}[q_0,\cohomologyclass{v}]^{(0,1)}$ is the $(0,1)$-part of $\boldsymbol{\Omega}[q_0,\cohomologyclass{v}]$. From \eqref{eq:differential-cohomology2}, a Beltrami differential $\boldsymbol{\Omega}[q_0,\cohomologyclass{v}]^{(0,1)}/\omega_{q_0}$ on $\tilde{M}_{q_0}$ is the lift of the infinitesimal Beltrami differential $\mu$ on $M_0$ associated to $\tvector{\cohomologyclass{v}}{q_0}$. Let $\boldsymbol{\Omega}^h+\overline{\boldsymbol{\Omega}^{ah}}$ be the harmonic form in the de Rham cohomology class of $\boldsymbol{\Omega}[q_0,\cohomologyclass{v}]$, where $\boldsymbol{\Omega}^h$ and $\boldsymbol{\Omega}^{ah}$ are holomorphic $1$-forms on $\tilde{M}_{q_0}$.
Then,
\begin{align}
\int_{M_0}\mu\phi
&=\frac{1}{2}\int_{\tilde{M}_{q_0}}
\frac{\boldsymbol{\Omega}[q_0,\cohomologyclass{v}]^{(0,1)}}{\omega_{q_0}}
\pi_{q_0}^*(\phi) =
-\frac{\sqrt{-1}}{4}\int_{\tilde{M}_{q_0}}
\boldsymbol{\Omega}[q_0,\cohomologyclass{v}]
\wedge
\frac{\pi_{q_0}^*(\phi)}{\omega_{q_0}}
\label{eq:dHKdec2}
\\
&=
-\frac{\sqrt{-1}}{4}\int_{\tilde{M}_{q_0}}
\overline{\boldsymbol{\Omega}^{ah}}
\wedge
\frac{\pi_{q_0}^*(\phi)}{\omega_{q_0}}.
\nonumber
\end{align}
We can easily check that every holomorphic $1$-form in the $(-1)$-eigenspace of
the space of holomorphic $1$-forms
is presented as $\pi_{q_0}^*(\phi)/\omega_{q_0}$
for some $\phi\in \mathcal{Q}_{x_0}$.
From \eqref{eq:dHKdec1} and \eqref{eq:dHKdec2},
we have $\boldsymbol{\Omega}^{ah}=\pi_{q_0}^*\left(
\eta_{\tvector{\cohomologyclass{v}}{q_0}}\right)/\omega_{q_0}$.
Since the harmonic differential in the de Rham cohomology class is unique,
from \eqref{eq:differential-cohomology3},
we deduce that 
$\boldsymbol{\Omega}^{h}=\pi_{q_0}^*\left(
\eta_{\tvector{\overline{\cohomologyclass{v}}}{q_0}}\right)/\omega_{q_0}$.
\end{proof}

For a generic differential $q_0\in \mathcal{Q}_{g,m}$,
we define
$$
\homorel_0(q_0)=\{\cohomologyclass{v}\in \homorel(q_0)\mid
\tvector{\cohomologyclass{v}}{q_0}=0\}.
$$
From Proposition \ref{prop:hodge-kodaira-decomposition},
we have
\begin{corollary}
\label{coro:qd-space-homo}
Let $x_0\in \teich_{g,m}$ and $q_0\in \mathcal{Q}_{x_0}$
a generic differential.
Then,
the mapping
$$
\homorel_0(q_0)\ni \cohomologyclass{v}\mapsto
\eta_{\tvector{\overline{\cohomologyclass{v}}}{q_0}}\in \mathcal{Q}_{x_0}
$$
is a complex linear isomorphism.
\end{corollary}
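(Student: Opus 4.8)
The plan is to read the statement off directly from the Hodge--Kodaira decomposition of Proposition \ref{prop:hodge-kodaira-decomposition}. First I would note that, by the definition of $\homorel_0(q_0)$, any $\cohomologyclass{v}$ in it satisfies $\tvector{\cohomologyclass{v}}{q_0}=0$; since $v\mapsto \eta_v$ is an isomorphism it sends $0$ to $0$, so $\eta_{\tvector{\cohomologyclass{v}}{q_0}}=0$. Feeding this into Proposition \ref{prop:hodge-kodaira-decomposition} kills the second summand and leaves the representation
\begin{equation*}
\cohomologyclass{v}(C)=\int_C\frac{\pi_{q_0}^*(\eta_{\tvector{\overline{\cohomologyclass{v}}}{q_0}})}{\omega_{q_0}}
\end{equation*}
for all $C\in H_1(\tilde{M}_{q_0},\mathbb{R})^-$. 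This single identity is the engine of the whole argument.

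Next I would settle complex linearity. Observe that $\homorel_0(q_0)$ is the kernel of the complex linear map $\cohomologyclass{v}\mapsto \tvector{\cohomologyclass{v}}{q_0}=d\projQ\mid_{q_0}[\cohomologyclass{v}]$, hence a complex subspace, so that the phrase ``complex linear isomorphism'' is meaningful. The map in the statement factors as the conjugation $\cohomologyclass{v}\mapsto\overline{\cohomologyclass{v}}$ (anti-complex linear, by Remark \ref{remark:1}(2)), followed by $\overline{\cohomologyclass{v}}\mapsto\tvector{\overline{\cohomologyclass{v}}}{q_0}$ (complex linear, being $d\projQ$), followed by the $q_0$-realization $\tvector{\overline{\cohomologyclass{v}}}{q_0}\mapsto\eta_{\tvector{\overline{\cohomologyclass{v}}}{q_0}}$ (anti-complex linear). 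Tracking a scalar $\lambda$ through the three factors gives anti $\circ$ complex $\circ$ anti, an even number of conjugations, so the composite is complex linear.

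Then I would prove injectivity straight from the displayed representation: if $\eta_{\tvector{\overline{\cohomologyclass{v}}}{q_0}}=0$ then the integrand vanishes identically, so $\cohomologyclass{v}(C)=0$ for every $C\in H_1(\tilde{M}_{q_0},\mathbb{R})^-$, i.e. $\cohomologyclass{v}=0$ in $\homorel_0(q_0)$. Finally a dimension count upgrades injectivity to surjectivity: for generic $q_0$ the stratum $\mathcal{Q}(\boldsymbol{\pi}(q_0))$ is open in $\mathcal{Q}_{g,m}$, so $\dim_{\mathbb{C}}\homorel(q_0)=\dim_{\mathbb{C}}\mathcal{Q}_{g,m}=6g-6+2m$; since $\projQ\colon \mathcal{Q}_{g,m}\to\teich_{g,m}$ is a submersion, $\homorel_0(q_0)=\ker d\projQ\mid_{q_0}$ has complex dimension $(6g-6+2m)-(3g-3+m)=3g-3+m=\dim_{\mathbb{C}}\mathcal{Q}_{x_0}$. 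An injective complex linear map between complex vector spaces of the same finite dimension is an isomorphism, which finishes the argument.

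As for difficulty, there is no serious obstacle here, because Proposition \ref{prop:hodge-kodaira-decomposition} already carries out all the analytic work; the only points demanding care are the bookkeeping of anti-complex versus complex linearity across the three factors and the verification that the kernel $\homorel_0(q_0)$ is genuinely a complex subspace, so that the assertion even makes sense. The dimension count, which relies on genericity (so that $\tilde{\Sigma}_{ub}(q_0)=\emptyset$ and the stratum is top-dimensional), is routine but is precisely what converts injectivity into the claimed isomorphism.
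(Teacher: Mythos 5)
Your proposal is correct and follows the same route as the paper, which derives the corollary directly from Proposition \ref{prop:hodge-kodaira-decomposition} without spelling out the details. Your elaboration --- the vanishing of the second summand for $\cohomologyclass{v}\in\homorel_0(q_0)$, the parity count of anti-complex linear factors, injectivity from the resulting period representation, and the dimension count $\dim_{\mathbb{C}}\homorel_0(q_0)=(6g-6+2m)-(3g-3+m)=\dim_{\mathbb{C}}\mathcal{Q}_{x_0}$ using that the generic stratum is open and $\projQ$ is a submersion --- is exactly the bookkeeping the paper leaves implicit.
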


\begin{example}
\label{example:generic-eta}
For generic $q\in \mathcal{Q}_{g,m}$,
\begin{align*}
\int_C
\frac{\pi_{q}^*(q)}{\omega_{q}}
=
\int_C\omega_{q}
&=\cohomologyclass{u}[q](C)
=\int_C
\frac{\pi_{q}^*(\eta_{\tvector{\overline{\cohomologyclass{u}[q]}}{q}})}
{\omega_{q}}+\overline{
\left(
\frac{\pi_{q}^*(\eta_{\tvector{\cohomologyclass{u}[q]}{q}})}{\omega_{q}}
\right)}
\end{align*}
for $C\in H_1(\tilde{M}_{q},\mathbb{R})^-$.
Therefore,
$\eta_{\tvector{\overline{\cohomologyclass{u}[q]}}{q}}=q$
and
$\eta_{\tvector{\cohomologyclass{u}[q]}{q}}=0$.
\end{example}

\section{Stratification of Teichm{\"u}ller space}
\label{sec:Stratification-Qy0}

\subsection{Stratifications on $\mathcal{Q}_{x_{0}}$}
\label{subsec:stratification-Qy0}
Let $x_{0}\in \teich_{g,m}$.
Dumas \cite{Dumas:2015}	
defined a stratification of $\mathcal{Q}_{x_{0}}$ by symbols
applying the Whitney stratification
(cf. \cite{Schwartz:1966} and \cite{Whitney:1965}).
The stratification on $\mathcal{Q}_{g,m}$
provides a stratification on $\mathcal{Q}_{x_{0}}$
by complex-analytic sets.
This stratification on $\mathcal{Q}_{x_0}$ can be refined as a complex-analytic stratification
$\{Z_{i}\}_{i\in I}=\{Z_{i,x_{0}}\}_{i\in I}$ satisfying the following conditions:
\begin{enumerate}
\item
Each $Z_{i}$ is a complex submanifold of $\mathcal{Q}_{x_{0}}-\{0\}$
invariant under the action of $\mathbb{C}^{*}$;
\item
the symbol is constant on each stratum $Z_{i}$;
\item
$\mathcal{Q}_{x_{0}}-\{0\}=\sqcup_{i\in I}Z_{i}$;
\item
$Z_{i}\cap Z_{j}=\emptyset$ if $i\ne j$; and
\item
the closure $Z_{j}$ is a complex-analytic set,
and if $Z_{i}\cap \overline{Z_{j}}\ne \emptyset$ for $i,j\in I$,
then $Z_{i}\subset \overline{Z_{j}}$.
\end{enumerate}
The refinement refers to changing the stratification
in such a way that each new stratum is entirely contained
in one of the old strata.
Under the situation in the above (5),
$\dim_{\mathbb{C}}Z_{i}<\dim_{\mathbb{C}}Z_{j}$
if $Z_{i}\ne Z_{j}$.
%
There is a unique stratum $Z_{\infty}$ consisting of all generic differentials
(we assume the index set $I$ contains a symbol ``$\infty$'').
Since the stratification is locally finite,
we have

\begin{lemma}
\label{lem:locally-finite}
For any $q_{0}\in Z_{i}-\{0\}$,
there is a neighborhood $U$ in $\mathcal{Q}_{x_{0}}-\{0\}$ of $q_{0}$ such that
$I_{U}=\{i\in I\mid Z_{i}\cap U\ne \emptyset\}$ is a finite set;
and if $Z_{j}\cap U\ne \emptyset$,
then $Z_i\cap \overline{Z_j}\ne \emptyset$. 
\end{lemma}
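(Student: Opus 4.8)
The plan is to derive both assertions directly from the local finiteness that is built into the definition of a stratification (a stratification is a \emph{locally finite} collection of locally closed submanifolds), together with the elementary fact that each closure $\overline{Z_j}$ is a closed set (indeed a complex-analytic set, by condition (5) of the stratification). No deep input from the complex-analytic structure is needed; the argument is purely point-set.

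First I would invoke local finiteness at the point $q_0 \in Z_i - \{0\}$: there is a neighborhood $U_0$ of $q_0$ in $\mathcal{Q}_{x_0} - \{0\}$ that meets only finitely many strata, say $Z_{j_1}, \dots, Z_{j_k}$ (one of which is $Z_i$ itself, since $q_0 \in Z_i \cap U_0$). Any neighborhood $U \subset U_0$ then has $I_U \subset \{j_1, \dots, j_k\}$, so the finiteness of $I_U$ is immediate and will hold for whatever shrinking of $U_0$ I perform next. To secure the second assertion, I would shrink $U_0$ so as to discard precisely those strata among $Z_{j_1}, \dots, Z_{j_k}$ whose closure misses $q_0$. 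Concretely, for each index $j_l$ with $q_0 \notin \overline{Z_{j_l}}$, the complement of the closed set $\overline{Z_{j_l}}$ is an open neighborhood $V_l$ of $q_0$ disjoint from $Z_{j_l}$; I then set $U = U_0 \cap \bigcap_l V_l$, where the intersection runs over the (finitely many) such $j_l$. This is again a neighborhood of $q_0$, and by construction any stratum $Z_j$ meeting $U$ must be one of the $Z_{j_l}$ with $q_0 \in \overline{Z_{j_l}}$. For such a $j$ we have $q_0 \in Z_i \cap \overline{Z_j}$, whence $Z_i \cap \overline{Z_j} \ne \emptyset$, as required.

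The whole argument is routine; the only point requiring care is that local finiteness is what reduces the problem to finitely many strata, so that the finite intersection $U_0 \cap \bigcap_l V_l$ stays open. With infinitely many strata one could not perform this shrinking, so \emph{local finiteness is exactly the hypothesis that makes the statement true}. I also note that the frontier condition in (5) is not actually invoked here, since the argument exhibits $q_0$ itself as a point of $Z_i \cap \overline{Z_j}$ rather than deducing nonemptiness of the intersection abstractly.
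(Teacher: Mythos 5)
Your argument is correct and is precisely the routine point-set reasoning the paper has in mind: the paper offers no proof at all, merely prefacing the lemma with ``Since the stratification is locally finite, we have,'' and your elaboration (shrink a locally-finite neighborhood by removing the finitely many closed sets $\overline{Z_{j_l}}$ not containing $q_0$) supplies exactly the intended details. Your closing observation that the frontier condition is not needed, since $q_0$ itself witnesses $Z_i\cap\overline{Z_j}\ne\emptyset$, is also accurate.
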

We extend an observation by Dumas
as follows
(cf. Lemma 5.2 in \cite{Dumas:2015}).

\begin{proposition}[Tangent space to the strata in fibers]
\label{prop:tangent-space-strata}
Let $\{Z_{i}\}_{i\in I}$ is the stratification of $\mathcal{Q}_{y_{0}}$
defined in \S\ref{subsec:stratification-Qy0}.
Let $q_0\in Z_{i}$.
If we identify the tangent space $T_{q_0}Z_{i}$ as a subspace of $\mathcal{Q}_{x_{0}}$,
we have $T_{q_0}Z_{i}\subset \mathcal{Q}_{x_{0}}^{T}(q_0)$.
\end{proposition}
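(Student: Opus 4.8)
The plan is to use that both $\mathcal{Q}_{x_0}^{T}(q_0)$ and $T_{q_0}Z_i$ are complex-linear subspaces of the vector space $\mathcal{Q}_{x_0}=T_{q_0}\mathcal{Q}_{x_0}$: the former is carved out by the linear local vanishing conditions encoded in its divisor inequality, while the latter is the tangent space of the complex submanifold $Z_i\subset \mathcal{Q}_{x_0}$, so it is spanned by the velocities of holomorphic curves in $Z_i$. It therefore suffices to fix a holomorphic curve $t\mapsto q_t\in Z_i$ with $q_0$ at $t=0$, write $\psi=\dot q_0\in\mathcal{Q}_{x_0}$ for its velocity at $t=0$, and verify the pointwise estimates $\order_p(\psi)\ge \order_p(q_0)-1$ for $p\in\Sigma_{s\setminus m}(q_0)$ and $\order_p(\psi)\ge\order_p(q_0)$ for $p\in\Sigma_{sm}(q_0)$, together with $\order_p(\psi)\ge 0$ at the free marked points $\Sigma_{m\setminus s}(q_0)$ (the remaining points being handled automatically by $\psi\in\mathcal{Q}_{x_0}$). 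Since the stratification lives in the single fiber $\mathcal{Q}_{x_0}$, the surface $M_0$ stays fixed and $q_t$ is a holomorphic family of quadratic differentials on one Riemann surface, which reduces everything to a local computation near each singularity.

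First I would localize at a singularity $p_0$ of $q_0$ in a fixed holomorphic coordinate $z$ with $z(p_0)=0$, writing $k=\order_{p_0}(q_0)$. The key input is the constancy of the symbol along $Z_i$, combined with the marking of the singularities inherited from the local product structure of $\cup_q\tilde M_q$ recalled in \S\ref{subsec:local-chart-Qpi}: this forces the singularity of order $k$ at $p_0$ to persist, for all nearby $q_t\in Z_i$, as a single singularity of the same order $k$ at a point $p(t)$ depending holomorphically on $t$ with $p(0)=p_0$ (if it split or changed multiplicity, the multiset of orders, hence the symbol, would change). Consequently I may use the local normal form
\begin{equation*}
q_t(z)=(z-p(t))^{k}\,h_t(z)\,(dz)^2
\end{equation*}
on a small disk about $p_0$, where $h_t$ is holomorphic in $z$ and in $t$ and $h_t(p(t))\ne 0$. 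For a non-marked singularity ($p_0\in\Sigma_{s\setminus m}$, so $k\ge 1$) the zero is free to move, whereas for a singularity at a marked point ($p_0\in\Sigma_{sm}$) the convention (2) of \S\ref{subsec:Remark-stratification-Qgm} pins it to the fixed marked point, so $p(t)\equiv p_0$; likewise a free marked point stays free, giving there the trivial factorization $q_t(z)=h_t(z)(dz)^2$ with $h_t(0)\ne 0$.

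Differentiating the normal form at $t=0$ yields
\begin{equation*}
\psi(z)=\Bigl[-k\,\dot p(0)\,(z-p_0)^{k-1}h_0(z)+(z-p_0)^{k}\,\dot h_0(z)\Bigr](dz)^2,
\end{equation*}
whose two terms vanish to orders $k-1$ and $k$ at $p_0$. For $p_0\in\Sigma_{s\setminus m}$ this gives $\order_{p_0}(\psi)\ge k-1$, and for $p_0\in\Sigma_{sm}$ the vanishing of $\dot p(0)$ removes the first term and gives $\order_{p_0}(\psi)\ge k$ (which is automatic and harmless in the simple-pole case $k=-1$), while at a free marked point $\order_{p_0}(\psi)\ge 0$. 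Assembling these local inequalities over $\Sigma_s(q_0)$ shows that $(\psi)$ dominates the prescribed divisor, i.e. $\psi\in\mathcal{Q}_{x_0}^{T}(q_0)$, as desired.

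The step I expect to be the main obstacle is justifying the local normal form rigorously: namely that along the stratum the singularity at $p_0$ moves as a single point $p(t)$ of unchanged order $k$ with a nonvanishing cofactor $h_t$, all depending holomorphically on $t$. This is precisely where the definition of $Z_i$ by a fixed symbol, the marking of the zeros and poles coming from the local product structure of the family of double covers, and the convention that marked singularities remain pinned to their marked points must be combined; it is this marked-point bookkeeping that extends Dumas' no-marked-point computation (\cite[Lemma 5.2]{Dumas:2015}) to the present setting. Once the normal form is in place, the differentiation and the reading-off of the vanishing orders are routine.
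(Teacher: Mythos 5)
Your proposal is correct and follows essentially the same route as the paper: localize at each singular point, use the constancy of the symbol along the stratum to put the deformation into a local normal form in which the singularity persists with the same order (pinned at marked points, free to move otherwise), differentiate, and read off the orders $\order_{p_0}(\dot q)\ge k-1$ resp.\ $\ge k$. The only difference is presentational --- the paper encodes the same local computation as a Lie derivative $L_X(z^{k_0}dz^2)=(k_0z^{k_0-1}X+2z^{k_0}X')dz^2$ along a holomorphic vector field with $X(0)=0$ at marked points, citing Hubbard--Masur for the universal deformation of the singularity, where you use an explicit moving-zero factorization $(z-p(t))^k h_t(z)\,dz^2$.
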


\begin{proof}
Let $p_{0}\in \Sigma_s(q_0)$
and $k_{0}=\order_{p_{0}}(q_0)$.
For simplicity,
$q_{0}$ is assumed to be represented as $q_0=z^{k_{0}}dz^{2}$
around $p_{0}$ with the coordinate $z$ with $z(p_{0})=0$.
From the universal deformation of the singularities,
the deformation of $q_0$ around $p_{0}$
is described as the Lie derivative
$$
L_{X}(z^{k_{0}}dz^{2})=(k_{0}z^{k_{0}-1}X(z)+2z^{k_{0}}X'(z))dz^{2}
$$
along a holomorphic vector field $X=X(z)(\partial/\partial z)$ around $p_{0}$,
where $X(0)=0$ if $p_{0}\in \Sigma_{m}(q_0)$
(cf. \cite[Proposition 3.1]{Hubbard_Masur:1979}.
See also Lemma 5.2 in \cite{Dumas:2015}).
One can see that
the infinitesimal deformation
$\dot{q}$
satisfies
$$
\order_{p_{0}}(\dot{q})\ge
\begin{cases}
k_{0}-1 & (\mbox{if $p_{0}\in \Sigma_{s\setminus m}(q_0)$})\\
k_{0} & (\mbox{if $p_{0}\in \Sigma_{sm}(q_0)$})
\end{cases}
$$
and is contained in 
$ \mathcal{Q}_{x_{0}}^{T}(q_0)$.  
\end{proof}

\subsection{Stratification of Teichm{\"u}ller space}
Let $x_0\in \teich_{g,m}$.
Let $\{Z_i\}_{i\in I}$ be the stratification 
of $\mathcal{Q}_{x_0}-\{0\}$ defined in \S\ref{subsec:stratification-Qy0}.
Let $\mathcal{UQ}_{x_0}$ be the unit ball in $\mathcal{Q}_{x_0}$
with respect to the $L^1$-norm and set
$\TeichHomeo_{x_0}\colon \mathcal{UQ}_{x_0}\to \teich_{g,m}$
be the Teichm{\"u}ller homeomorphism discussed in \S\ref{subsubsec:Teichmuller-homeo}.
For $i\in I$,
we define
$\teich_i=\TeichHomeo_{x_0}(Z_i\cap \mathcal{UQ}_{x_0})$.
The purpose of this section is to show the following.

\begin{theorem}[Stratification]
\label{thm:stratificationofT}
The collection $\{\teich_{i}\}_{i\in I}$ is a stratification of
real-analytic submanifolds
in $\teich_{g,m}-\{x_0\}$.
\end{theorem}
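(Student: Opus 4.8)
The plan is to transport the stratification $\{Z_i\}$ of $\mathcal{Q}_{x_0}-\{0\}$ through the Teichm\"uller homeomorphism. Since $\TeichHomeo_{x_0}$ restricts to a homeomorphism $\mathcal{UQ}_{x_0}\setminus\{0\}\to\teich_{g,m}\setminus\{x_0\}$ carrying $Z_i\cap\mathcal{UQ}_{x_0}$ onto $\teich_i$, the purely topological axioms of a stratification transfer at once: $\{\teich_i\}$ covers $\teich_{g,m}-\{x_0\}$ because $\{Z_i\cap\mathcal{UQ}_{x_0}\}$ covers $\mathcal{UQ}_{x_0}\setminus\{0\}$, local finiteness follows from Lemma \ref{lem:locally-finite}, and each $\teich_i$ is locally closed because $Z_i$ is. For the frontier condition I would use that each $Z_i$ is $\mathbb{C}^{*}$-invariant, so that $\overline{Z_i\cap\mathcal{UQ}_{x_0}}=\overline{Z_i}\cap(\mathcal{UQ}_{x_0}\setminus\{0\})$ inside $\mathcal{UQ}_{x_0}\setminus\{0\}$; condition (5) of \S\ref{subsec:stratification-Qy0} then passes to $\{\teich_i\}$ since a homeomorphism respects closures. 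Thus everything reduces to the single analytic statement that $\TeichHomeo_{x_0}$ restricts to a real-analytic \emph{embedding} of $Z_i\cap\mathcal{UQ}_{x_0}$ onto $\teich_i$. Note that the top stratum $\teich_\infty$ is open in $\teich_{g,m}$, so there the assertion is automatic and the content lies in the lower strata.

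Real-analyticity I would obtain by factoring $\TeichHomeo_{x_0}=\projQ\circ\mathcal{E}$, where $\mathcal{E}(q)=\terminaldiff{t}{q}\in\mathcal{Q}(\boldsymbol{\pi}(q_*))$ is the terminal differential of the Teichm\"uller map defining $\TeichHomeo_{x_0}(q)$, with $t=\tanh^{-1}\|q\|$, so that $\projQ\circ\mathcal{E}=\TeichHomeo_{x_0}$. Working near a fixed $q_*\in Z_i\cap\mathcal{UQ}_{x_0}$ in the holomorphic period chart of Proposition \ref{prop:local-coordinateQgm}, formula \eqref{eq:teichmuller-deformation-homomorphism1} gives
\begin{equation*}
\cohomologyclass{u}[\mathcal{E}(q)]=\tfrac{1+e^{-2t}}{2}\,\cohomologyclass{u}[q]+\tfrac{1-e^{-2t}}{2}\,\overline{\cohomologyclass{u}[q]},\qquad t=\tanh^{-1}\|q\|.
\end{equation*}
Here $q\mapsto\cohomologyclass{u}[q]$ is holomorphic, and by \eqref{eq:Hermite-form-cover} with $\psi_1=\psi_2=q$ the norm $\|q\|=\frac{\sqrt{-1}}{4}\int_{\tilde{M}_q}\omega_q\wedge\overline{\omega_q}$ is a Hermitian, hence real-analytic, function of the periods. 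Therefore $\cohomologyclass{u}[\mathcal{E}(\cdot)]$ is real-analytic on $Z_i$, and composing with the holomorphic projection $\projQ$ shows $\TeichHomeo_{x_0}|_{Z_i}$ is real-analytic.

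That $\TeichHomeo_{x_0}|_{Z_i}$ is a homeomorphism onto $\teich_i$ is immediate from the ambient $\TeichHomeo_{x_0}$ being one; the crux is that it is an immersion. Identify $T_{q_*}Z_i$ with its period image $S_0\subset\homorel(q_*)$ and write $\zeta_*=\cohomologyclass{u}[q_*]$, $\dot\zeta=d\cohomologyclass{u}(v)$ for $v\in T_{q_*}Z_i$. Differentiating the display and applying the inverse of the stretch reduces the condition $d(\cohomologyclass{u}[\mathcal{E}(\cdot)])(v)=0$ to $\dot\zeta=\dot t\,(\zeta_*-\overline{\zeta_*})$. The radial direction shows $\zeta_*\in S_0$, so this forces $\dot t\,\overline{\zeta_*}\in S_0$. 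Now by Proposition \ref{prop:trivial-KS-class} the space $S_0$ consists of periods of the holomorphic forms $\pi_{q_*}^*(\psi)/\omega_{q_*}$ with $\psi\in\mathcal{Q}_{x_0}^{T}(q_*)$, whereas $\overline{\zeta_*}$ is the period of the antiholomorphic form $\overline{\omega_{q_*}}$; restricting to absolute homology and invoking the Hodge decomposition on $\tilde{M}_{q_*}$ (holomorphic $\cap$ antiholomorphic $=\{0\}$, and $\omega_{q_*}$ has nonzero absolute periods) gives $\overline{\zeta_*}\notin S_0$. Hence $\dot t=0$ and $\dot\zeta=0$, so $\mathcal{E}|_{Z_i}$ is itself an immersion.

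The remaining and main difficulty is transversality to the fibres of $\projQ$: I must rule out a nonzero $v$ with $d\mathcal{E}(v)\in\ker d\projQ$. In the period chart around $Q_*=\mathcal{E}(q_*)$, Proposition \ref{prop:trivial-KS-class} identifies $\ker d\projQ$ with the periods of $\tilde{M}_{Q_*}$-holomorphic forms, while $d\mathcal{E}(v)$ is the Teichm\"uller stretch $\xi\mapsto\tfrac{1+e^{-2t}}{2}\xi+\tfrac{1-e^{-2t}}{2}\bar\xi$ of a $\tilde{M}_{q_*}$-holomorphic class, corrected by a multiple of $\overline{\zeta_*}-\zeta_*$. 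The required statement is then that for $t>0$ the antiholomorphic part, for the $Q_*$-structure, of this stretched class vanishes only when $v=0$; equivalently the Teichm\"uller stretch carries the holomorphic-period subspace of the initial double cover off that of the terminal one. This is where the two distinct complex structures interact, and I expect it to be the hardest point. I would establish it by extending the Hodge--Kodaira computation of Proposition \ref{prop:hodge-kodaira-decomposition} from generic to arbitrary strata, using Proposition \ref{prop:tangent-space-strata} and the description \eqref{eq:differential-cohomology2} of $\boldsymbol{\Omega}[q_0,\cohomologyclass{v}]$ to control the antiholomorphic (Teichm\"uller) part of the stretched class at the non-generic point $Q_*$, so that a nontrivial stretched holomorphic class necessarily acquires a nonzero antiholomorphic component, i.e.\ a nonzero image under $d\projQ$. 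Granting this, $\TeichHomeo_{x_0}|_{Z_i\cap\mathcal{UQ}_{x_0}}$ is an injective real-analytic immersion that is a homeomorphism onto its image, hence a real-analytic embedding; therefore each $\teich_i$ is a real-analytic submanifold and $\{\teich_i\}_{i\in I}$ is the desired stratification.
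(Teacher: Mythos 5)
Your reduction of the theorem to the single analytic claim that $\TeichHomeo_{x_0}|_{Z_i\cap\mathcal{UQ}_{x_0}}$ is a real-analytic immersion, and your proof of real-analyticity via the factorization $\TeichHomeo_{x_0}=\projQ\circ\mathcal{E}$ with $\cohomologyclass{u}[\mathcal{E}(q)]$ expressed through \eqref{eq:teichmuller-deformation-homomorphism1} and Riemann's formula for $\|q\|$, match the paper's argument. The side computation showing that $\mathcal{E}|_{Z_i}$ is itself an immersion is essentially sound (the paper does not isolate it), but it is not where the difficulty lies.

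The genuine gap is exactly the step you flag as ``the remaining and main difficulty'': ruling out a nonzero $v\in T_{q_*}Z_i$ with $d\mathcal{E}(v)\in\ker d\projQ$. You do not prove this; you only propose to ``extend the Hodge--Kodaira computation of Proposition \ref{prop:hodge-kodaira-decomposition} from generic to arbitrary strata'' so that the stretched class ``necessarily acquires a nonzero antiholomorphic component.'' As stated this is not a workable plan: the condition $d\projQ(\cohomologyclass{w})=0$ forces $\cohomologyclass{w}$ to be the period homomorphism of a form $\pi_{Q_*}^*(\phi)/\omega_{Q_*}$ holomorphic for the \emph{terminal} complex structure (Proposition \ref{prop:trivial-KS-class}), while $\cohomologyclass{w}$ is built from a form holomorphic for the \emph{initial} structure via the real-linear stretch; deciding when these two conditions are compatible is a global comparison of two distinct Riemann surfaces, not something a pointwise Hodge decomposition at $Q_*$ can see. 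The paper closes this gap with an entirely different tool: writing $\cohomologyclass{v}$ and $\cohomologyclass{w}$ as periods of $\pi_{q_0}^*(\psi)/\omega_{q_0}$ and $\pi_{Q_0}^*(\phi)/\omega_{Q_0}$, matching real parts to get $\verticalfoliation{\psi'}=\verticalfoliation{\phi'}$, invoking the Kerckhoff formula to obtain $e^{-2d_T(x_0,x_1)}\ext_{x_0}(\verticalfoliation{\psi'})\le\ext_{x_1}(\verticalfoliation{\phi'})$, computing $\|\phi'\|$ explicitly by Riemann's bilinear relations to show the inequality is in fact an equality with a nonpositive defect $-\bigl(\tfrac{d}{dt}\|q(t)\|\big|_{t=0}\bigr)^2/(1+\|q_0\|)^2$, and then using the uniqueness of the Teichm{\"u}ller extremal problem to force $\psi'=s^2q_0$ and finally $s=0$. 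Without this (or an equivalent) global extremal-length argument, your proof of the theorem is incomplete at its central point.
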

Since $\TeichHomeo_{x_0}$ is a homeomorphism,
$\{\teich_i\}_{i\in I}$ is a stratification of \emph{topological manifolds} in
$\teich_{g,m}-\{x_0\}$.
Namely,
each $\teich_i$ is a locally closed topological submanifold of $\teich_{g,m}-\{0\}$,
the collection $\{\teich_i\}_{i\in I}$ is a locally finite
and satisfies
%
\begin{itemize}
\item[(3')]
$\teich_{g,m}-\{0\}=\sqcup_{i\in I}\teich_{i}$;
\item[(4')]
$\teich_{i}\cap \teich_{j}=\emptyset$ if $i\ne j$;
and
\item[(5')]
if $\teich_{i}\cap \overline{\teich_{j}}\ne \emptyset$ for $i,j\in I$,
then $\teich_{i}\subset \overline{\teich_{j}}$.
\end{itemize}
(The numbers correspond to those in the properties of complex-analytic stratifications given in \S\ref{subsec:stratification-Qy0}.)
We will show that
the restriction of $\TeichHomeo_{x_0}$ to each $Z_i\cap\mathcal{UQ}_{x_0}$
is a real-analytic immersion.
The author does not know if the closure $\overline{\teich_i}$ is an real-analytic subset of $\teich_{g,m}-\{0\}$ for each $i\in I$.
Notice that Theorem \ref{thm:stratificationofT}
is recognized as a  kind of refinements of
Masur's result \cite[Proposition 2.2]{Masur:1995}.

\begin{proof}[Proof of Theorem \ref{thm:stratificationofT}]
Let $i\in I$.
Notice from the definition
that $Z_i$ is a complex submanifold of $\mathcal{Q}_{x_0}$.
From \eqref{eq:Teichmuller-homeo-distance} and
\S\ref{subsec:Teichmuller-deformation},
the Teichm{\"u}ller homeomorphism $\TeichHomeo_{x_0}$ on $Z_i$ is described as
$$
\TeichHomeo_{x_0}(q)=\projQ(\terminaldiff{\tanh^{-1}(\|q\|)}{q})
$$
for $q\in Z_i\cap \mathcal{UQ}_{x_0}$.
From 
Proposition \ref{prop:local-coordinateQgm}
and Riemann's formula,
the norm $Z_i\ni q\mapsto \|q\|$ varies real-analytically
(cf. \cite[\S1]{Kra:1981}
and \cite[Chapter III]{Farkas_Kra:1992}).
Hence the mapping
$$
Z_i\ni q\mapsto \terminaldiff{\tanh^{-1}(\|q\|)}{q}\in \mathcal{Q}(\boldsymbol{\pi}(q_0))
$$
is real-analytic.
Therefore,
$\TeichHomeo_{x_0}$ is also real-analytic on $Z_i$
since $\projQ$ is holomorphic.
Hence,
to complete the proof,
it suffices to show that the (real) differential of the restriction of $\TeichHomeo_{x_0}$ to $Z_i$
is non-singular.

Let $q_0\in Z_i\cap \mathcal{UQ}_{x_0}$ and
$\cohomologyclass{v}\in \homorel(q_0)$
($\hookrightarrow T_{q_0}\mathcal{Q}(\boldsymbol{\pi}(q_0))$)
which tangent to $Z_i$.
For simplicity,
set $Q_0=\terminaldiff{\tanh^{-1}(\|q_0\|)}{q_0}$
and $x_1=(M_1,f_1)=x_{\tanh^{-1}(\|q_0\|),q_0}\in \teich_{g,m}$
(cf. \S\ref{subsec:Teichmuller-deformation}).
From Proposition \ref{prop:tangent-space-strata},
there is $\psi\in \mathcal{Q}^T_{x_0}(q_0)$ such that
\begin{equation}
\label{eq:derivative-period-1}
\cohomologyclass{v}(C)=\int_C
\frac{\pi_{q_0}^*(\psi)}{\omega_{q_0}}
\end{equation}
for $C\in H_1(\tilde{M}_{q_0},\tilde{\Sigma}_{ub}(q_0),\mathbb{R})^-$.

Let $f\colon \{|\lambda|<\epsilon\}\to Z_i$ be a holomorphic disk
with $q(0)=q_0$ and $q'(0)=\cohomologyclass{v}$.
For simplicity,
set $K(t)=\exp(2\tanh^{-1}(\|q(t)\|)=(1+\|q(t)\|)/(1-\|q(t)\|)$ for $t\in \mathbb{R}$ with $|t|<\epsilon$.
From
\eqref{eq:teichmuller-deformation-homomorphism4}
and \eqref{eq:derivative-period-1},
\begin{align}
\label{eq:derivative-period-2}
&
\left.
\frac{d}{dt}
\right|_{t=0}
\cohomologyclass{u}[\terminaldiff{\tanh^{-1}(\|q(t)\|)}{q(t)}](C)
\\
&
=
{\rm Re}
\int_C
\frac{\pi_{q_0}^*(\psi)}{\omega_{q_0}}
+
\sqrt{-1}
{\rm Im}
\left(K(0)^{-1}
\int_C
\frac{\pi_{q_0}^*(\psi)}{\omega_{q_0}}
+(K^{-1})'|_{t=0}\int_C\omega_{q_0}
\right).
\nonumber
\end{align}
for $C\in H_1(\tilde{M}_{q_0},\tilde{\Sigma}_{ub}(q_0),\mathbb{R})^-$.
We denote by $\cohomologyclass{w}(C)$ the right-hand side of \eqref{eq:derivative-period-2}.

Recall that the holomorphic tangent space is the $(1,0)$-part
of the complexification of the real tangent vector space
(cf. \cite[Chapter IX]{Kobayashi_Nomizu:1996}).
In general,
for a complex manifold with a local chart $z=(z_1,\cdots,z_n)$,
a holomorphic tangent vector $\sum_{j=1}^na_j(\partial/\partial z_j)$
is the $(1,0)$-part of a real tangent vector
$\sum_{j=1}^n(a_j(\partial/\partial z_j)+\overline{a_j}(\partial/\partial \overline{z}_j))$ of the underlying differential structure.
The variation \eqref{eq:derivative-period-2} stands for the $(1,0)$-part of the image of the corresponding real tangent vector to $\cohomologyclass{v}$ under the (real) differential of the map $Z_i\ni q\mapsto \terminaldiff{\tanh^{-1}(\|q\|)}{q}\in \mathcal{Q}(\boldsymbol{\pi}(q_0))$ around $q_0$. Then,  $\cohomologyclass{w}$ stands for a homomorphism in $\homorel(Q_0)$ ($\subset T_{Q_0}\mathcal{Q}(\boldsymbol{\pi}(q_0))$) via the isomorphism $H_1(\tilde{M}_{q_0},\tilde{\Sigma}_{ub}(q_0),\mathbb{R})^-\cong H_1(\tilde{M}_{Q_0},\tilde{\Sigma}_{ub}(Q_0),\mathbb{R})^-$ induced by the Teichm{\"u}ller mapping from $x_0$ to $x_1$.

Suppose that the derivative
$$
\{|t|<\epsilon\mid t\in \mathbb{R}\}\ni t\mapsto
\projQ\left(
\terminaldiff{\tanh^{-1}(\|q(t)\|)}{q(t)}
\right)\in \teich_{g,m}
$$
at $t=0$ vanishes.
We will conclude $\cohomologyclass{v}=0$.

Since $\projQ$ is holomorphic,
the differential of $\Pi$
sends the $(1,0)$-part $T_{Q_0}\mathcal{Q}_{g,m}$
of the complexification of the real tangent space at $Q_0$
to that at $x_1$.
From the assumption,
$$
\tvector{\cohomologyclass{w}}{Q_0}=d\projQ\mid_{Q_0}[\cohomologyclass{w}]=0
$$
 in $T_{x_1}\teich_{g,m}$ (cf. \cite[Proposition 2.9, Chapter IX]{Kobayashi_Nomizu:1996}).
From Proposition \ref{prop:trivial-KS-class},
there is $\phi\in \mathcal{Q}^T_{x_1}(Q_0)$ such that
\begin{equation}
\label{eq:derivative-period-3}
\cohomologyclass{w}(C)=\int_C
\frac{\pi_{Q_0}^*(\phi)}{\omega_{Q_0}}
\end{equation}
for $C\in H_1(\tilde{M}_{q_0},\tilde{\Sigma}_{ub}(q_0),\mathbb{R})^-
\cong H_1(\tilde{M}_{Q_0},\tilde{\Sigma}_{ub}(Q_0),\mathbb{R})^-$.

Let $\psi'\in \mathcal{Q}_{x_0}$ and $\phi'\in \mathcal{Q}_{x_1}$
be the holomorphic quadratic differentials
defined by descending the squares $(\pi_{q_0}^*(\psi)/\omega_{q_0})^2$
and $(\pi_{Q_0}^*(\phi)/\omega_{Q_0})^2$
respectively.
Comparing the real parts of \eqref{eq:derivative-period-2} and \eqref{eq:derivative-period-3}
we have $\verticalfoliation{\phi'}=\verticalfoliation{\psi'}$ in $\mathcal{MF}$
(cf. \cite[Lemma 4.3]{Hubbard_Masur:1979}).
Since $K(0)=e^{2d_T(x_0,x_1)}$,
\begin{equation}
\label{eq:stratification-1}
K(0)^{-1}\|\psi'\|=e^{-2d_T(x_0,x_1)}\ext_{x_0}(\verticalfoliation{\psi'})
\le \ext_{x_1}(\verticalfoliation{\phi'})=\|\phi'\|
\end{equation}
from the Kerckhoff formula
(see also \cite[Lemma 4.1]{Gardiner_Masur:1991}).

By Riemann's formula, \eqref{eq:Hermite-form-cover} and \eqref{eq:derivative-period-2},
\begin{align}
\|\phi'\|
&=
\frac{1}{2}
\int_{\tilde{M}_{Q_0}}
{\rm Re}\left(\frac{\pi_{Q_0}^*(\phi)}{\omega_{Q_0}}\right)
\wedge
{\rm Im}\left(
\frac{\pi_{Q_0}^*(\phi)}{\omega_{Q_0}}
\right)
\label{eq:stratification-3}
\\
&=K(0)^{-1}
\frac{1}{2}\int_{\tilde{M}_{q_0}}{\rm Re}
\left(
\frac{\pi_{q_0}^*(\psi)}{\omega_{q_0}}
\right)
\wedge {\rm Im}\left(
\frac{\pi_{q_0}^*(\psi)}{\omega_{q_0}}
\right)
\nonumber
\\
&\qquad
+(K^{-1})'|_{t=0}
\frac{1}{2}\int_{\tilde{M}_{q_0}}{\rm Re}
\left(
\frac{\pi_{q_0}^*(\psi)}{\omega_{q_0}}
\right)
\wedge {\rm Im}(\omega_{q_0})
\nonumber
\\
&=K(0)^{-1}\|\psi'\|
+(K^{-1})'(0)\,
{\rm Re}
\left\{
\frac{\sqrt{-1}}{4}\int_{\tilde{M}_{q_0}}\frac{\pi_{q_0}^*(\psi)}{\omega_{q_0}}\wedge
\overline{\omega_{q_0}}
\right\}.
\nonumber
\end{align}
Since 
\begin{equation}
\label{eq:stratification-2}
\left.
\frac{d\|q(t)\|}{dt}
\right|_{t=0}
=
{\rm Re}
\left\{
\frac{\sqrt{-1}}{2}
\int_{\tilde{M}_{q_0}}\frac{\pi_{q_0}^*(\psi)}{\omega_{q_0}}\wedge
\overline{\omega_{q_0}}
\right\},
\end{equation}
from \eqref{eq:stratification-1}
and \eqref{eq:stratification-3},
we obtain
\begin{align*}
0
&\le \|\phi'\|-K(0)^{-1}\|\psi'\|
=(K^{-1})'(0)\,
{\rm Re}
\left\{
\frac{\sqrt{-1}}{4}\int_{\tilde{M}_{q_0}}\frac{\pi_{q_0}^*(\psi)}{\omega_{q_0}}\wedge
\overline{\omega_{q_0}}
\right\}
\\
&
=
-\frac{1}{(1+\|q_0\|)^2}\left(\left.
\frac{d\|q(t)\|}{dt}
\right|_{t=0}
\right)^2,
\end{align*}
and $(K^{-1})'=0$ at $t=0$.
Therefore,
$\|\phi'\|=K(0)^{-1}\|\psi'\|$
from \eqref{eq:stratification-3} again.
Hence,
we have $\verticalfoliation{\psi'}=s\verticalfoliation{q_0}$ and $\psi'=s^2q_0$
for some $s\ge 0$
from the uniqueness of the extremal problem for the Kerckhoff formula
(or the Teichm{\"u}ller uniqueness theorem. See \cite[Theorem 5.9]{Imayoshi_Taniguchi:1992}
or \cite[\S (3.5)]{Abikoff:1980}).
From \eqref{eq:stratification-2},
we obtain
$$
0=
\left.\frac{d\|q(t)\|}{dt}
\right|_{t=0}
=s^2\|q_0\|
$$
and $s=0$.
Therefore,
$\psi'=0$ and
$\cohomologyclass{v}=0$ from \eqref{eq:derivative-period-1}.
%
\end{proof}

From \eqref{eq:Teichmuller-homeo-distance},
we conclude the following.

\begin{corollary}
\label{coro:Rees}
The Teichm{\"u}ller distance function $\teich_{g,m}-\{x_0\}\ni x\mapsto d_T(x_0,x)$
is real-analytic on each stratum of $\{\teich_{i}\}_{i\in I}$.
\end{corollary}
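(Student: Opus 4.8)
The plan is to read the distance function directly off the Teichm\"uller homeomorphism, using the real-analyticity already extracted in the proof of Theorem~\ref{thm:stratificationofT}. Fix $i \in I$. First I would observe that the proof of Theorem~\ref{thm:stratificationofT} shows the restriction of $\TeichHomeo_{x_0}$ to $Z_i \cap \mathcal{UQ}_{x_0}$ to be a real-analytic map with everywhere non-singular (real) differential; since $\TeichHomeo_{x_0}$ is a global homeomorphism, this restriction is a real-analytic bijection onto $\teich_i$ with non-singular differential, hence a real-analytic diffeomorphism by the inverse function theorem. In particular its inverse $\teich_i \ni x \mapsto \TeichHomeo_{x_0}^{-1}(x) \in Z_i \cap \mathcal{UQ}_{x_0}$ is real-analytic.

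Next I would invoke the fact, recorded within the same proof, that the norm $Z_i \ni q \mapsto \|q\|$ varies real-analytically; this comes from the period chart of Proposition~\ref{prop:local-coordinateQgm} together with Riemann's bilinear relation (cf.\ \cite[\S1]{Kra:1981} and \cite[Chapter III]{Farkas_Kra:1992}). Since each $q \in Z_i$ is non-zero, the norm takes values in $(0,1)$ along $Z_i \cap \mathcal{UQ}_{x_0}$, where $\tanh^{-1}$ is real-analytic.

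Finally I would assemble these pieces via \eqref{eq:Teichmuller-homeo-distance}: for $x = \TeichHomeo_{x_0}(q) \in \teich_i$ one has
$$
d_T(x_0,x) = \tanh^{-1}(\|q\|) = \tanh^{-1}\bigl(\|\TeichHomeo_{x_0}^{-1}(x)\|\bigr),
$$
exhibiting the distance function on $\teich_i$ as a composition of the three real-analytic maps $x \mapsto \TeichHomeo_{x_0}^{-1}(x)$, $q \mapsto \|q\|$, and $\tanh^{-1}$; it is therefore real-analytic, and since $i$ was arbitrary the claim follows on every stratum. The only substantive input is the non-singularity of the differential of $\TeichHomeo_{x_0}|_{Z_i}$, which is exactly what Theorem~\ref{thm:stratificationofT} already supplies; granting it, the corollary reduces to this routine composition, so I expect no genuine obstacle beyond correctly transporting the regularity of the restricted homeomorphism to its inverse.
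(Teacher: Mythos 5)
Your proposal is correct and follows essentially the same route as the paper, which derives the corollary directly from \eqref{eq:Teichmuller-homeo-distance} together with the ingredients already established in the proof of Theorem \ref{thm:stratificationofT} (real-analyticity of $q\mapsto\|q\|$ on $Z_i$ and the fact that $\TeichHomeo_{x_0}|_{Z_i\cap\mathcal{UQ}_{x_0}}$ is a real-analytic diffeomorphism onto $\teich_i$). You merely make explicit the composition $x\mapsto\tanh^{-1}\bigl(\|\TeichHomeo_{x_0}^{-1}(x)\|\bigr)$ that the paper leaves implicit.
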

Recall that the top stratum $Z_\infty$ of the stratification of $\mathcal{Q}_{x_0}$
is an open set which consists of generic differentials.
From Theorem \ref{thm:stratificationofT},
the restriction of the Teichm{\"u}ller homeomorphism $\TeichHomeo_{x_0}\colon Z_\infty
\to \teich_\infty$ is a real-analytic diffeomorphism.
Hence,
Corollary \ref{coro:Rees} is thought of as an extension of an observation by Rees
in \cite[\S2.3]{Rees:2002}.

\subsection{Proof of Theorem 
\ref{thm:removable-singularities}}
\label{subsec:removable-singularites}
%
%
We use the following removable singularity theorem due to Blanchet and Chirka.

\begin{proposition}[Blanchet \cite{Blanchet:1995} and Chirka \cite{Chirka:2003}]
\label{prop:Blanchet}
Let $\Omega$ be a domain in $\mathbb{C}^N$ and $V\subset \Omega$ be a $C^1$-real submanifold with positive real codimension.
Let $u$ be a function of class $C^1$ on $\Omega$.
Then, $u$ is plurisubharmonic on $\Omega$ if so is $u$ on $\Omega-V$.
\end{proposition}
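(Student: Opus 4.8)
The plan is to reduce the plurisubharmonicity of $u$ to a one--variable removable--singularity statement and to cross $V$ using the continuity of the gradient. Recall that a $C^1$ (indeed, any upper semicontinuous, locally integrable) function $u$ is plurisubharmonic on $\Omega$ exactly when, for every complex line $\ell=\{a+\zeta w:\zeta\in\mathbb{C}\}$, the restriction $v(\zeta)=u(a+\zeta w)$ is subharmonic on $\ell\cap\Omega$; equivalently, the directional complex Laplacian $T_w=\sum_{j,k}\partial_{z_j}\partial_{\bar z_k}u\,w_j\overline{w_k}$ is a non-negative distribution for each $w$. By hypothesis $T_w\ge 0$ on $\Omega\setminus V$, so the entire content is to exclude a negative part of $T_w$ carried by $V$. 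First I would fix a line $\ell$ and set $v=u|_\ell\in C^1$, which is subharmonic off the slice $E=\ell\cap V$; for a generic $\ell$, transversality makes $E$ a $C^1$ submanifold of $\ell$ of real dimension $2-\operatorname{codim}_{\mathbb{R}}V$.

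The argument then splits by codimension. If $\operatorname{codim}_{\mathbb{R}}V\ge 2$, a generic slice $E$ is discrete, hence polar in $\ell$; since $v$ is continuous and thus locally bounded, classical potential theory shows the singularity is removable and $v$ is subharmonic across $E$. The delicate case, and the only place where the $C^1$-hypothesis is really used, is $\operatorname{codim}_{\mathbb{R}}V=1$, where $E$ is a $C^1$ arc $\gamma$ dividing a small disk $D\subset\ell$ into two pieces $D_+$ and $D_-$. A curve is not removable for general subharmonic functions, so no capacity argument is available and I must instead exploit that $\nabla v$ is continuous across $\gamma$.

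For the core lemma I would show $\Delta v\ge 0$ on $D$ directly by Green's first identity, avoiding any assumption that $v$ is $C^2$ (subharmonicity off $V$ provides only a Riesz measure). For a non-negative test function $\phi\in C_c^\infty(D)$ one has $\langle\Delta v,\phi\rangle=-\int_D\nabla v\cdot\nabla\phi$, and applying Green's identity separately on $D_+$ and $D_-$ gives
\[
-\int_D\nabla v\cdot\nabla\phi=\int_{D}\phi\,d(\mu_++\mu_-)-\int_\gamma\phi\,(\partial_{n_+}v+\partial_{n_-}v)\,ds,
\]
where $\mu_\pm=\Delta v|_{D_\pm}\ge 0$ are the Riesz measures and $n_\pm$ are the outward unit normals of $D_\pm$ along $\gamma$. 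Since $n_+=-n_-$ and $\nabla v$ is single--valued and continuous on $\gamma$, the two interface terms cancel exactly, leaving $\langle\Delta v,\phi\rangle=\int_D\phi\,d(\mu_++\mu_-)\ge 0$. Thus $v$ is subharmonic on $D$, and hence on $\ell\cap\Omega$. The continuity of $\nabla v$ is precisely what forces the boundary contributions along $\gamma$ to vanish, which is the conceptual heart of the statement.

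It remains to tie up two points. First, the identity above must be justified when $v$ is only $C^1$ and $\mu_\pm$ are merely measures; this I would do by mollifying $v$ on slightly shrunken subdomains, using that $v*\rho_\delta\to v$ in $C^1_{loc}$ while $\Delta(v*\rho_\delta)\to\mu_\pm$ weakly. Second, plurisubharmonicity requires subharmonicity on \emph{every} complex line, whereas the slicing above is clean only for generic $\ell$; the non-generic lines (tangent to, or contained in, $V$) are handled by approximating them with generic ones and passing to the limit in the sub-mean-value inequality, which is legitimate because $u$ is continuous. The main obstacle I anticipate is exactly the codimension-one core lemma: positivity of $T_w$ off $\gamma$ yields only a measure that could a priori carry negative singular mass on $\gamma$, and excluding this must rest on the continuity of the first derivatives rather than on any smallness (capacity) of $\gamma$, while simultaneously controlling the non-generic and tangential slices.
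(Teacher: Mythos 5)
Your core codimension--one lemma is correct, and it is in fact the same mechanism as the paper's: the paper proves (Proposition \ref{prop:subharmonicitiy}) that a $C^1$ function subharmonic off $V$ is subharmonic for the full real Laplacian, by mollifying, applying Green's formula separately on the two components of the complement of a collar $\cup_{|t|<\delta}(V+t\mathbf{n})$, and letting the interface terms over the translated copies of $V$ cancel in the limit because $\nabla u$ is continuous and the outward normals are opposite --- exactly your cancellation $\partial_{n_+}v+\partial_{n_-}v=0$, run in $\mathbb{R}^N$ instead of in a two--dimensional slice. Where you genuinely diverge is in what you do with this lemma: the paper does \emph{not} attempt to prove plurisubharmonicity by slicing; having obtained subharmonicity on all of $\Omega$, it quotes Chirka's theorem (``subharmonic on $\Omega$ and plurisubharmonic on $\Omega\setminus V$ implies plurisubharmonic on $\Omega$'', with no smoothness hypothesis) to finish. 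Your plan would, if completed, have the merit of avoiding Chirka's result as a black box.

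The gap is in the reduction to generic slices. Everything after the core lemma rests on the claim that complex lines transverse to $V$ are dense (so that tangent or contained lines can be treated by passing to the limit in the sub--mean--value inequality). For a $C^1$ hypersurface this is not justified: a line $\ell=\pi_w^{-1}(t)$ is transverse to $V$ exactly when $t$ is a regular value of the projection $\pi_w|_V\colon V^{2N-1}\to\mathbb{R}^{2N-2}$, and Sard's theorem for a $C^k$ map $M^m\to N^n$ requires $k\ge m-n+1=2$ here; Whitney's example of a $C^1$ function $\mathbb{R}^2\to\mathbb{R}$ whose critical values fill an interval shows the $C^1$ case genuinely fails. (Your codimension $\ge 2$ case is safe, since there $m-n+1\le 1$ and $C^1$ Sard suffices; the problem is precisely codimension one, which is also the only case you cannot sidestep by fattening $V$ to a hypersurface.) Without density of transverse lines, a non-transverse slice $\ell\cap V$ is merely a closed subset of $\ell$ --- possibly of positive area --- to which neither the polarity argument nor the Green's identity applies, and the approximation step has nothing to approximate with. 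The natural repair is to abandon slicing altogether and run your Green's--identity cancellation directly on the distribution $T_w=\sum_{j,k}\partial_{z_j}\partial_{\overline{z}_k}u\,w_j\overline{w_k}$ in $\mathbb{C}^N$, using the collar $V+t\mathbf{n}$ as the paper does for $\Delta$: after one integration by parts against $u\in C^1$, the interface terms again cancel by continuity of $\nabla u$, $T_w\ge 0$ holds on each side by hypothesis, and no transversality is needed. That version would give a complete self-contained proof, dispensing with both the genericity claim and the appeal to Chirka.
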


Indeed, Blanchet proved the above removable singularity theorem under an additional condition that $u$ is of class $C^2$ on $\Omega-V$.
However, the condition is eliminated by applying Chirka's theorem \cite{Chirka:2003} with a standard argument with mollifiers. For the completeness, we will confirm Proposition \ref{prop:Blanchet} in Appendix (\S\ref{sec:appendix}).

We return to our setting.
Let $\{\teich_i\}_{i\in I}$ be the stratification 
in Theorem \ref{thm:stratificationofT}.
Let $u$ be a function of class $C^1$ on $\teich_{g,m}-\{x_0\}$
which is bounded above around $x_0$.
Suppose that $u$ is plurisubharmonic on the top stratum $\teich_\infty$.

Let $x_1\in \teich_{g,m}-\{x_0\}$
and $\teich_i$ the stratum containing $x_1$.
Suppose that $u$ is extended as a plurisubharmonic function
on $\teich_j$ for all $j\in I$ with $\dim\teich_j>\dim\teich_i$.
From the locally finiteness of the stratification,
there is a small neighborhood $U$ of $x_1$ such that 
$I(U)=\{j\in I\mid \teich_j\cap U\ne \emptyset\}$ is a finite set
and $\teich_i\cap \overline{\teich_j}\ne\emptyset$
for $j\in I(U)$
from Lemma \ref{lem:locally-finite}.
From the assumption,
$u$ is plurisubharmonic on $U-\teich_i$.
Since $\teich_i$ is a real-analytic submanifold of $\teich_{g,m}$
with positive codimension,
by Proposition \ref{prop:Blanchet},
$u$ is plurisubharmonic on $U$.
This inductive procedure guarantees that $u$ is plurisubharmonic
on $\teich_{g,m}-\{x_0\}$.
Since $u$ is bounded above around $x_0$,
$u$ is extended as a pluriharmonic function on $\teich_{g,m}$
(cf. \cite[Theorem 2.9.22]{Klimek:1991}).
\qed

%


\section{Pluricomplex Green function on the Teichm{\"u}ller space}
\label{sec:pluri-green-function}
In this section,
we will show the following theorem
which implies Theorem \ref{thm:main1},
since the Teichm{\"u}ller distance is the Kobayashi distance on $\teich_{g,m}$
(cf. \cite{Royden:1971} and \S\ref{subsec:complex-analysis}).

\begin{theorem}[Plurisubharmonicity]
\label{thm:plurisubharmonicity}
Let $x_0\in \teich_{g,m}$.
The log-tanh of the Teichm{\"u}ller distance function
\begin{equation*}
\label{eq:log-tanh-Teich}
\teich_{g,m}\ni x\mapsto u_{x_0}(x):=\log\tanh d_T(x_0,x)
\end{equation*}
is plurisubharmonic on $\teich_{g,m}$.
\end{theorem}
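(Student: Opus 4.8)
The plan is to combine the removability theorem with a direct computation of the Levi form on the generic locus. First I would recall that $u_{x_0}$ is of class $C^1$ on $\teich_{g,m}-\{x_0\}$ (the differentiability of the Teichm\"uller distance to a fixed point), and that $u_{x_0}(x)=\log\tanh d_T(x_0,x)\to-\infty$ as $x\to x_0$, so $u_{x_0}$ is bounded above near $x_0$. By Theorem \ref{thm:removable-singularities} it then suffices to prove that $u_{x_0}$ is plurisubharmonic on the top stratum $\teich_\infty$. Since $\TeichHomeo_{x_0}$ restricts to a real-analytic diffeomorphism onto $\teich_\infty$ and $u_{x_0}$ is real-analytic there by Corollary \ref{coro:Rees}, plurisubharmonicity on $\teich_\infty$ is equivalent to $\Levi{u_{x_0}}{v}{v}\ge 0$ at every $x_1\in\teich_\infty$ and every holomorphic tangent vector $v\in T_{x_1}\teich_{g,m}$.

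For the local computation I would fix a generic $q_0\in\mathcal{UQ}_{x_0}$ with $x_1=\TeichHomeo_{x_0}(q_0)$. By \eqref{eq:Teichmuller-homeo-distance} we have $u_{x_0}(\TeichHomeo_{x_0}(q))=\log\|q\|$ near $x_1$, and by \eqref{eq:Hermite-form-cover} with $\psi_1=\psi_2=q$ the norm is a positive Hermitian form in the period vector:
\[
\|q\|=\frac{\sqrt{-1}}{4}\int_{\tilde M_{q}}\omega_{q}\wedge\overline{\omega_{q}}
=H\bigl(\cohomologyclass{u}[q],\overline{\cohomologyclass{u}[q]}\bigr),
\]
where, by Riemann's bilinear relations, $H$ is positive-definite Hermitian on $\homorel(q_0)$. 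For a holomorphic disk $\lambda\mapsto x(\lambda)$ in $\teich_{g,m}$ with $x(0)=x_1$ and $\dot x(0)=v$, one has
\[
\Levi{u_{x_0}}{v}{v}=\partial_\lambda\partial_{\bar\lambda}\big|_{\lambda=0}\log H\bigl(P(\lambda),\overline{P(\lambda)}\bigr),\qquad P(\lambda):=\cohomologyclass{u}[q(\lambda)],
\]
where $q(\lambda)=\TeichHomeo_{x_0}^{-1}(x(\lambda))$ is the initial differential. If $P$ were holomorphic in $\lambda$, this would equal $\bigl(\|P\|_H^2\|P'\|_H^2-|\langle P',P\rangle_H|^2\bigr)/\|P\|_H^4\ge 0$ by the Cauchy--Schwarz inequality for $H$, and we would be done.

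The point is that the assignment $x\mapsto q=\TeichHomeo_{x_0}^{-1}(x)$ of initial differentials is real-analytic but not holomorphic, so $P(\lambda)$ carries a non-trivial anti-holomorphic part and produces additional terms. I would control this failure with the Hodge--Kodaira decomposition (Proposition \ref{prop:hodge-kodaira-decomposition}): the anti-holomorphic part of the period variation in the direction $v$ is governed by the $q_0$-realization $\eta_v$, i.e. by the non-degenerate Hermitian pairing \eqref{eq:Hermitian-product-q0}. Concretely, I would differentiate the Teichm\"uller-flow formula \eqref{eq:teichmuller-deformation-homomorphism4} twice, using the first-variation formula \eqref{eq:stratification-2} for $d\|q(t)\|/dt$, to express $\Levi{u_{x_0}}{v}{v}$ as the non-negative Cauchy--Schwarz term above plus correction terms coming from the $\lambda$-dependence of the Teichm\"uller time $K(\lambda)=e^{2d_T(x_0,x(\lambda))}$ and from $\eta_v$. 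The remaining task is to assemble these terms and show the total is non-negative; here the extremal-length (Kerckhoff) inequality used in \eqref{eq:stratification-1} together with the positivity of \eqref{eq:Hermitian-product-q0} should yield the sign, and the resulting non-negative form is the one described geometrically by the Thurston symplectic form through Dumas' symplectic structure.

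The main obstacle is precisely this last step: carrying out the second-order computation in the genuine complex structure of $\teich_{g,m}$ rather than in the (non-holomorphic) Teichm\"uller coordinate, and then proving the sign of the assembled Hermitian form. Unlike the pure Cauchy--Schwarz situation, the anti-holomorphic contribution tracked by $\eta_v$ and the variation of the Teichm\"uller time enter simultaneously, so the non-negativity is not formal and must be extracted from the interplay between the positive pairing \eqref{eq:Hermitian-product-q0} and the extremal-length inequality. I expect this coupling, rather than the reduction in the first paragraph, to be where the real work lies.
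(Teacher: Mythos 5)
Your reduction in the first paragraph is exactly the paper's: Earle's $C^1$ regularity, $u_{x_0}\to-\infty$ at $x_0$, and Theorem \ref{thm:removable-singularities} reduce everything to non-negativity of the Levi form on the top stratum $\teich_\infty$. But the core of the proof --- that non-negativity --- is not actually carried out in your proposal; you defer it (``should yield the sign,'' ``I expect this coupling \ldots to be where the real work lies''), and the mechanism you gesture at is not one that obviously closes. Two concrete problems. First, your claim that $\|q\|=H(\cohomologyclass{u}[q],\overline{\cohomologyclass{u}[q]})$ with $H$ \emph{positive-definite} on $\homorel(q_0)$ is false: the Riemann bilinear relations give positivity of $\frac{\sqrt{-1}}{4}\int\omega\wedge\overline{\omega}$ only on the half-dimensional subspace of periods of holomorphic $1$-forms, while on all of $\homorel(q_0)\cong H^1(\tilde{M}_{q_0},\mathbb{C})^-$ the cup-product Hermitian form is indefinite of split signature. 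The period variation $P_\lambda$, $P_{\overline{\lambda}}$ does \emph{not} stay in the positive cone --- this is precisely why the Levi form \eqref{eq:Levi-Green} contains the a priori indefinite combination $\derivbar{x_1}{v}\wedge\overline{\derivbar{x_1}{v}}-\deriv{x_1}{v}\wedge\overline{\deriv{x_1}{v}}$ --- so the Cauchy--Schwarz heuristic does not even describe the holomorphic case correctly. Second, the Kerckhoff inequality \eqref{eq:stratification-1} is used in the paper to prove the stratification theorem (injectivity of the differential of $\TeichHomeo_{x_0}$ on strata), not to produce the sign of the Levi form; there is no indication that it can be made to do so.

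What the paper actually does on $\teich_\infty$ is different and is the step you are missing: it decomposes $T_{x_1}\teich_{g,m}$ as the Levi-orthogonal sum of the complex tangent space $H^{1,0}_{x_1}$ of the Teichm\"uller sphere $S(x_0,d_0)$ and the line spanned by the Teichm\"uller-disk direction $\normalvec{x_1}$ (Proposition \ref{prop:CR} and \eqref{eq:derivative-normal-vector}). Along $\normalvec{x_1}$ the Levi form of $u_{x_0}$ vanishes identically (log-tanh of the distance is harmonic on the holomorphically and isometrically embedded Teichm\"uller disk). On $H^{1,0}_{x_1}$ the negative correction term $-\frac{1}{16\tanh^2(d_0)}|\cohomologyclass{u}[q_0]\wedge\overline{\derivbar{x_1}{v}}|^2$ in \eqref{eq:Levi-Green} drops out by the defining equation of $H^{1,0}_{x_1}$, and the remaining indefinite term is shown to be non-negative by importing the already-known plurisubharmonicity of the Teichm\"uller distance function itself (Lemma \ref{lem:non-negativity}, via \cite[Corollary 1.2]{Miyachi:2017} or Krushkal). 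Without this CR splitting and the appeal to the prior plurisubharmonicity of $d_T(x_0,\cdot)$, your argument has no route to the sign, so as it stands the proposal has a genuine gap at the decisive step.
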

Earle \cite{Earle:1977}
showed that $u_{x_0}$ is of class $C^1$ on $\teich_{g,m}-\{x_0\}$.
Since $u_{x_0}(x)\to -\infty$ as $x\to x_0$,
from Theorem \ref{thm:removable-singularities},
it suffices to show that $u_{x_0}$ is plurisubharmonic on 
the top stratum $\teich_\infty$, which will be proved in Theorem \ref{thm:PSH-log-tanh} in \S\ref{subsec:log-tanh-PSH}.


\subsection{Backgrounds from Complex analysis}
\label{subsec:complex-analysis}
Let $X$ be a complex manifold.
Let $p\in X$ and $z=(z_1,\cdots,z_n)$ be a holomorphic local chart
around $p$.
Let $u$ be a $C^2$ function around $p$ on $X$.
For $v=\sum_{i=1}^nv_i(\partial/\partial z_i)\in T_pX$,
we define the \emph{Levi form} of $u$ 
by
\begin{equation*}
\label{eq:Levi-form-definition}
\Levi{u}{v}{\overline{v}}=\sum_{i,j=1}^n\frac{\partial^2 u}{\partial z_i\partial \overline{z}_j}(z(p))
v_i\overline{v_j}.
\end{equation*}
Let $g\colon \{\lambda\in \mathbb{C}\mid |\lambda|<\epsilon\}\to X$
be a holomorphic mapping with $g(0)=p$ and $g_*(\partial/\partial\lambda)=v$.
Then, we see
\begin{equation}
\label{eq:leviform}
\Levi{u}{v}{\overline{v}}=\frac{\partial^2(u\circ g)}{\partial\lambda\partial\overline{\lambda}}(0).
\end{equation}
A $C^2$-function $u$ on $X$ is called \emph{plurisubharmonic}
if $\Levi{u}{v}{\overline{v}}\ge 0$ for $v\in T_pX$ and $p\in X$.
In general,
a function $u$ on a domain $\Omega$ on $\mathbb{C}^N$ is called \emph{plurisubharmonic} if for any $a\in \Omega$ and $b\in \mathbb{C}^N$,
$\lambda\mapsto u(a+\lambda b)$ is subharmonic or 
identically $-\infty$ on every component of $\{\lambda\in \mathbb{C}\mid
a+\lambda b\in \Omega\}$.

A bounded domain $\Omega$ in $\mathbb{C}^N$ is said to be \emph{hyperconvex}
if it admits a negative continuous plurisubharmonic exhaustion
(cf. \cite{Stehle:1974}).
Krushkal \cite{Krushkal:1991} showed that
Teichm{\"u}ller space is hyperconvex (see also \cite{Miyachi:2017}).

Demailly \cite{Demailly:1987} observed that
for any bounded hyperconvex domain $\Omega$ in $\mathbb{C}^n$
and $w\in \Omega$,
there is a unique plurisubharmonic function $g_{\Omega,w}\colon \Omega\to [-\infty,0)$
such that
\begin{enumerate}
\item
$(dd^cg_{\Omega,w})^n=(2\pi)^n\delta_w$,
where $\delta_w$ is the Dirac measure with support at $w$;
and
\item
$g_{\Omega,w}(z)=\sup_v\{v(z)\}$
where the supremum runs over all non-positive plurisubharmonic function $v$
on $\Omega$ with $v(z)\le \log\|z-w\|+O(1)$ around $z=w$.
\end{enumerate}
(cf. \cite[Th\'eor\`eme 4.3]{Demailly:1987}).
The function $g_\Omega(w,z)=g_{\Omega,w}(z)$ is called 
the \emph{pluricomplex Green function} on $\Omega$.
The pluricomplex Green function was introduced by Klimek \cite{Klimek:1985}.
Klimek showed that
\begin{equation}
\label{eq:Klimek-eq}
\log \tanh {\rm Car}_\Omega(z,w)
\le
g_\Omega(z,w)\le \log \tanh {\rm Kob}_\Omega(z,w)
\end{equation}
for $z,w\in \Omega$
and in the second inequality in \eqref{eq:Klimek-eq}, the equality holds if the third term of \eqref{eq:Klimek-eq} is plurisubharmonic,
where ${\rm Car}_\Omega$ and ${\rm Kob}_\Omega$ are the Carath\'eodory distance and the Kobayashi distance on $\Omega$, respectively (cf. \cite[Corollaries 1.2 and 1.4]{Klimek:1985}).

%
%
%
%
%

\subsection{Setting}
\label{subsec:setting-PSH}
Let $q_0\in Z_\infty\cap \mathcal{Q}_{x_0}$
and $x_1=\TeichHomeo_{x_0}(q_0)$.
Let $v\in T_{x_1}\teich_{g,m}$ and $g\colon \{|\lambda|<\epsilon\}\to \teich_{\infty}$ a holomorphic mapping with $g(0)=0$ and
$g_*(\left.\partial/\partial \lambda\right|_{\lambda=0})=v$.
For the simplicity,
let
\begin{align*}
q_\lambda
&=\TeichHomeo_{x_0}^{-1}(g(\lambda))\in Z_\infty\subset \mathcal{Q}_{x_0},
\\
Q_\lambda 
&=\terminaldiff{\tanh^{-1}(\|q_\lambda\|)}{q_\lambda}
\in \mathcal{Q}_{g(\lambda)},
\\
d(\lambda)
&=d_T(x_0,g(\lambda)),\ \mbox{and}
\ d_0 =d(0)=d_T(x_0,x_1)
\end{align*}
for $\lambda\in \{|\lambda|<\epsilon\}$.
Notice again that each $Q_\lambda$ is generic
since the Teichm{\"u}ller mapping preserves the order of singular points.
For calculations later,
we notice from the definition that
\begin{equation}
\label{eq:plurisubharmonicity-13}
\|q_\lambda\|=\tanh (d(\lambda))
\ \mbox{and}
\
\|Q_\lambda\|=e^{-2d(\lambda)}\|q_\lambda\|.
\end{equation}

\subsubsection{}
We define $\cohomologyclass{v}_1$, $\cohomologyclass{v}_2\in
\homorel(q_0)$
($\cong T_{q_0}\mathcal{Q}_{g,m}$)
by
\begin{equation}
\label{eq:plurisubharmonicity-2}
\cohomologyclass{u}[q_\lambda]=\cohomologyclass{u}[q_0]
+\lambda \cohomologyclass{v}_1+\overline{\lambda}
\cohomologyclass{v}_2+o(|\lambda|)
\end{equation}
as $\lambda\to 0$
on $H_1(\tilde{M}_{q_0},\mathbb{R})^-\cong
H_1(\tilde{M}_{q_0},\tilde{\Sigma}_{ub}(q_0),\mathbb{R})^-$.
Since $q_\lambda\in \mathcal{Q}_{x_0}$ for all $\lambda$,
we deduce $\tvector{\cohomologyclass{v}_i}{q_0}=0$ for $i=1,2$
and $\cohomologyclass{v}_1$,
$\cohomologyclass{v}_2\in \homorel_0(q_0)$
($\cong \mathcal{Q}_{x_0}\subset T_{q_0}\mathcal{Q}_{g,m}$).
We will use the notation
\begin{equation}
\label{eq:derivative-v}
\deriv{x_1}{v}=\cohomologyclass{v}_1, \ \mbox{and} \
\derivbar{x_1}{v}=\cohomologyclass{v}_2
\end{equation}
after calculating the first derivative and the Levi form of the Teichm{\"u}ller distance
(cf. \S\ref{subsec:Complex-tangent-space}).
However, 
in the following calculation,
we will use the notation $\cohomologyclass{v}_1$
and $\cohomologyclass{v}_2$ for the simplicity.

From \eqref{eq:teichmuller-deformation-homomorphism1}
and \eqref{eq:plurisubharmonicity-2},
\begin{align*}
\cohomologyclass{u}[Q_\lambda]
&=
{\rm Re}(\cohomologyclass{u}[q_\lambda])
+\sqrt{-1}e^{-2d(\lambda)}
{\rm Im}(\cohomologyclass{u}[q_\lambda])
\\
&=\cohomologyclass{u}[q_0]+
\lambda
\cohomologyclass{w}_1
+
\overline{\lambda}\cohomologyclass{w}_2+o(|\lambda|)
\nonumber
\end{align*}
on $H_1(\tilde{M}_{q_0},\mathbb{R})^-$
as $\lambda\to 0$,
where
$\cohomologyclass{w}_1$ and $\cohomologyclass{w}_2$
are in $\homorel(q_0)$ defined by
\begin{align}
\begin{cases}
\cohomologyclass{w}_1
&=
{\displaystyle
\frac{1+e^{-2d_0}}{2}\cohomologyclass{v}_1+
\frac{1-e^{-2d_0}}{2}\overline{\cohomologyclass{v}_2}
-d_\lambda\,
e^{-2d_0}(\cohomologyclass{u}[q_0]-\overline{\cohomologyclass{u}[q_0]})
}
\\
\cohomologyclass{w}_2
&=
{\displaystyle
\frac{1-e^{-2d_0}}{2}\overline{\cohomologyclass{v}_1}+
\frac{1+e^{-2d_0}}{2}\cohomologyclass{v}_2
-\overline{d_{\lambda}}\,
e^{-2d_0}(\cohomologyclass{u}[q_0]-\overline{\cohomologyclass{u}[q_0]}),
}
\end{cases}
\label{eq:plurisubharmonicity-5}
\end{align}
where $d_\lambda$ is the $\lambda$-derivative of the Teichm{\"u}ller distance function $d(\lambda)$
at $\lambda=0$.
In
\eqref{eq:plurisubharmonicity-5},
we canonically identify $\homorel(Q_0)$ with $\homorel(q_0)$,
and $\cohomologyclass{w}_1$ and $\cohomologyclass{w}_2$ stands for
tangent vectors in $T_{Q_0}\mathcal{Q}_{g,m}\cong \homorel(Q_0)$,
while the right-hand sides of
\eqref{eq:plurisubharmonicity-5}
are tangent vectors in $\homorel(q_0)\cong T_{q_0}\mathcal{Q}_{g,m}$.
See the discussion in the proof of Theorem \ref{thm:stratificationofT}
and (2) of Remark \ref{remark:1}.

Since $\projQ(\cohomologyclass{u}[Q_\lambda])=g(\lambda)$,
we have
\begin{equation}
\label{eq:plurisubharmonicity-6}
\tvector{\cohomologyclass{w}_1}{Q_0}=v \quad
\mbox{and}\quad
\tvector{\cohomologyclass{w}_2}{Q_0}=0
\end{equation}
and $\cohomologyclass{w}_2\in \homorel_0(Q_0)$,
since $\projQ$ is holomorphic.

\subsubsection{}
Since $H_1(\tilde{M}_{q_0},\mathbb{R})^-\cong
H_1(\tilde{M}_{q_0},\tilde{\Sigma}_{ub}(q_0),\mathbb{R})^-$,
$\homorel(q_0)$ is canonically isomorphic to the cohomology group
$H^1(\tilde{M}_{q_0},\mathbb{R})^-$.
We define the \emph{wedge product} $\wedge$ on $\homorel(q_0)$
by
\begin{equation*}
\label{eq:plurisubharmonicity-7}
\cohomologyclass{x}\wedge \cohomologyclass{y}
=
\int_{\tilde{M}_{q_0}}
\frac{\pi_{q_0}^*(\eta_{\tvector{\overline{\cohomologyclass{x}}}{q_0}})}{\omega_{q_0}}
\wedge
\overline{
\left(
\frac{\pi_{q_0}^*(\eta_{\tvector{\cohomologyclass{y}}{q_0}})}{\omega_{q_0}}
\right)}
-
\frac{\pi_{q_0}^*(\eta_{\tvector{\overline{\cohomologyclass{y}}}{q_0}})}{\omega_{q_0}}
\wedge
\overline{
\left(
\frac{\pi_{q_0}^*(\eta_{\tvector{\cohomologyclass{x}}{q_0}})}{\omega_{q_0}}
\right)}
\end{equation*}
for $\cohomologyclass{x}$, $\cohomologyclass{y}\in \homorel(q_0)$.
For generic $q\in \mathcal{Q}_{g,m}$,
\begin{align}
\sqrt{-1}\cohomologyclass{u}[q]\wedge \overline{\cohomologyclass{u}[q]}
&=4\|q\|
\label{eq:plurisubharmonicity-10}
\end{align}
(cf. \eqref{eq:Teichmuller-homeo-distance} and Example \ref{example:generic-eta}).

\begin{example}[Teichm{\"u}ller disk]
\label{example:teichmuller-disk}
The \emph{Teichm{\"u}ller disk} associated to $q_0$ is defined as an isometric holomorphic disk
in $\teich_{g,m}$
defined by the holomorphic family of Beltrami differentials
$$
\mathbb{D}\ni \lambda \mapsto 
\left(\frac{\lambda+\tanh(d_0)}{1+\tanh(d_0)\lambda}\right)
\frac{\overline{q_0}}{|q_0|}
$$
on $M_0$.
With the Teichm{\"u}ller homeomorphism \eqref{eq:Teichmuller-homeomorphism},
the Teichm{\"u}ller disk is described as
\begin{equation}
\label{eq:Teichmuller-disk-qd}
\mathbb{D}\ni \lambda\mapsto \TeichHomeo_{x_0}
\left(
\left(\frac{\overline{\lambda}+\tanh(d_0)}{1+\tanh(d_0)\overline{\lambda}}\right)\frac{q_0}{\|q_0\|}\right).
\end{equation}
For $\lambda\in \mathbb{D}$.
let $\teichdisk_{q_0} (\lambda)$ be the right-hand side of \eqref{eq:Teichmuller-disk-qd}.
By definition,
$\teichdisk_{q_0} (0)=x_1$ and $\teichdisk_{q_0} (-\tanh(d_0))=x_0$.
Then
\begin{align}
\cohomologyclass{u}[\TeichHomeo_{x_0}^{-1}(\teichdisk_{q_0} (\lambda))]
&=\frac{1}{\|q_0\|^{1/2}}\left(\frac{\overline{\lambda}+\tanh(d_0)}{1+\tanh(d_0)\overline{\lambda}}\right)^{1/2}\cohomologyclass{u}[q_0]
\label{eq:rep-q0-1}
\\
&=\cohomologyclass{u}[q_0]+\frac{\overline{\lambda}}{\sinh(2d_0)}\cohomologyclass{u}[q_0]+o(|\lambda|),
\nonumber
\end{align}
where the branch of the square root taken to be $1^{1/2}=1$.
\end{example}

\subsection{The first variation of the Teichm{\"u}ller distance}
We give the first variational formula of the Teichm{\"u}ller distance function
in our setting.
From Riemann's formula,
\eqref{eq:plurisubharmonicity-13} and \eqref{eq:plurisubharmonicity-10},
we deduce
\begin{align}
d_\lambda
&=\left.
(\tanh^{-1}(\|q_\lambda\|))_{\lambda}
\right|_{\lambda=0}
=
\frac{1}{1-\|q_0\|^2}\frac{\sqrt{-1}}{4}
(\cohomologyclass{v}_1\wedge \overline{u[q_0]}
+u[q_0]\wedge \overline{\cohomologyclass{v}_2})
\nonumber
\\
&=\frac{\sqrt{-1}\cosh^2(d_0)}{4}
(\cohomologyclass{v}_1\wedge \overline{u[q_0]}
+
u[q_0]\wedge \overline{\cohomologyclass{v}_2}).
\label{eq:plurisubharmonicity-14} 
\end{align}
On the other hand,
Earle \cite{Earle:1977} gave the first variational formula
\begin{equation*}
\label{eq:Earle-formula1}
d_\lambda
=\frac{1}{2\|Q_0\|}\int_{M_1}\mu Q_0
\end{equation*}
where $\mu$ is the infinitesimal Beltrami differential on $M_1$ representing $v$.
From \eqref{eq:plurisubharmonicity-13},
\eqref{eq:plurisubharmonicity-6}
and \eqref{eq:plurisubharmonicity-10},
\begin{align*}
&d_\lambda=\frac{1}{2\|Q_0\|}\int_{M_1}\mu Q_0
=\frac{1}{2e^{-2d_0}\tanh(d_0)}
\int_{M_1}\frac{\overline{\eta_v}}{|Q_0|}Q_0
\\
&=\frac{e^{2d_0}}{2\tanh(d_0)}
\frac{-\sqrt{-1}}{4}
\int_{\tilde{M}_{Q_0}}
\overline{
\left(
\frac{
\pi_{Q_0}^*(\eta_{\tvector{\cohomologyclass{w}_1}{Q_0}})
}
{\omega_{Q_0}}
\right)
}\wedge \omega_{Q_0}
=
\frac{-\sqrt{-1}e^{2d_0}}{8\tanh(d_0)}
\cohomologyclass{w}_1\wedge \cohomologyclass{u}[Q_0]
\\
&=
\frac{-\sqrt{-1}e^{2d_0}}{8\tanh(d_0)}
\left(
\frac{1+e^{-2d_0}}{2}\cohomologyclass{v}_1+
\frac{1-e^{-2d_0}}{2}\overline{\cohomologyclass{v}_2}
-d_\lambda\,
e^{-2d_0}(\cohomologyclass{u}[q_0]-\overline{\cohomologyclass{u}[q_0]})
\right)
\\
&\qquad\qquad\qquad\qquad \wedge
\left(\frac{1+e^{-2d_0}}{2}\cohomologyclass{u}[q_0]+
\frac{1-e^{-2d_0}}{2}\overline{\cohomologyclass{u}[q_0]}
\right)
\\
&=
\frac{-\sqrt{-1}e^{2d_0}}{8\tanh(d_0)}
\left(
\frac{1-e^{-4d_0}}{4}\overline{\cohomologyclass{v}_2}\wedge \cohomologyclass{u}[q_0]+
d_\lambda e^{-2d_0}\frac{1+e^{-2d_0}}{2}\overline{\cohomologyclass{u}[q_0]}\wedge \cohomologyclass{u}[q_0]
\right.
\\
&\quad \qquad\qquad\qquad\qquad+
\left.
\frac{1-e^{-4d_0}}{4}\cohomologyclass{v}_1\wedge \overline{\cohomologyclass{u}[q_0]}
-
d_\lambda e^{-2d_0}\frac{1-e^{-2d_0}}{2}\cohomologyclass{u}[q_0]\wedge \overline{\cohomologyclass{u}[q_0]}
\right)
\\
&=
-\frac{\sqrt{-1}\cosh^2(d_0)}{8}
\left(
\cohomologyclass{v}_1\wedge \overline{\cohomologyclass{u}[q_0]}
-\cohomologyclass{u}[q_0]\wedge \overline{\cohomologyclass{v}_2}
\right)
+
\frac{\sqrt{-1}d_\lambda }{8\tanh(d_0)}
\cohomologyclass{u}[q_0]\wedge \overline{\cohomologyclass{u}[q_0]}
\\
&=-\frac{\sqrt{-1}\cosh^2(d_0)}{8}
\left(
\cohomologyclass{v}_1\wedge \overline{\cohomologyclass{u}[q_0]}
-\cohomologyclass{u}[q_0]\wedge \overline{\cohomologyclass{v}_2}
\right)
+\frac{d_\lambda}{2}.
\end{align*}
Therefore,
we obtain
\begin{equation}
\label{eq:Earle-formula2}
d_\lambda=
\frac{\sqrt{-1}\cosh^2(d_0)}{4}
\left(
-\cohomologyclass{v}_1\wedge \overline{\cohomologyclass{u}[q_0]}
+\cohomologyclass{u}[q_0]\wedge \overline{\cohomologyclass{v}_2}
\right).
\end{equation}
Thus,
from \eqref{eq:plurisubharmonicity-14}  and \eqref{eq:Earle-formula2}
we conclude the following.
\begin{lemma}[First variational formula]
\label{lem:first-variational-formula}
Under the notations in \S\ref{subsec:setting-PSH},
we have
\begin{equation*}
\label{eq:first-variational-formula_our-setting}
d_\lambda
=
\frac{\sqrt{-1}\cosh^2(d_0)}{4}
\cohomologyclass{u}[q_0]\wedge \overline{\cohomologyclass{v}_2}
\end{equation*}
and $\cohomologyclass{v}_1\wedge \overline{\cohomologyclass{u}[q_0]}=0$.
\end{lemma}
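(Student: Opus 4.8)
The plan is to exploit the fact that, at this point, $d_\lambda$ has been computed in two genuinely independent ways, and to read off the lemma from the compatibility of the two answers. On one side, Riemann's bilinear relation combined with \eqref{eq:plurisubharmonicity-13} and \eqref{eq:plurisubharmonicity-10} gives \eqref{eq:plurisubharmonicity-14}, in which $d_\lambda$ appears as a fixed constant times the \emph{symmetric} combination $\cohomologyclass{v}_1\wedge\overline{\cohomologyclass{u}[q_0]}+\cohomologyclass{u}[q_0]\wedge\overline{\cohomologyclass{v}_2}$. On the other side, Earle's first variational formula, once $\mu$ is rewritten through the $q_0$-realization at $Q_0$ and the pairing is transported to the double cover $\tilde{M}_{Q_0}$, produces \eqref{eq:Earle-formula2}, in which $d_\lambda$ appears as the same constant times the \emph{antisymmetric} combination $-\cohomologyclass{v}_1\wedge\overline{\cohomologyclass{u}[q_0]}+\cohomologyclass{u}[q_0]\wedge\overline{\cohomologyclass{v}_2}$.

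The decisive step is then purely algebraic: I would subtract \eqref{eq:Earle-formula2} from \eqref{eq:plurisubharmonicity-14}. The left-hand sides agree, so the difference of the right-hand sides vanishes; the term $\cohomologyclass{u}[q_0]\wedge\overline{\cohomologyclass{v}_2}$ cancels while the term $\cohomologyclass{v}_1\wedge\overline{\cohomologyclass{u}[q_0]}$ doubles. Since the prefactor $\sqrt{-1}\cosh^2(d_0)/4$ is nonzero, this yields the orthogonality relation $\cohomologyclass{v}_1\wedge\overline{\cohomologyclass{u}[q_0]}=0$, which is the second assertion of the lemma. Substituting this relation back into either formula removes that term and leaves exactly $d_\lambda=\frac{\sqrt{-1}\cosh^2(d_0)}{4}\cohomologyclass{u}[q_0]\wedge\overline{\cohomologyclass{v}_2}$, the first assertion.

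The genuine work lies upstream of this comparison, not in it: the only real obstacle is establishing the Earle-side identity \eqref{eq:Earle-formula2}, which demands identifying the infinitesimal Beltrami differential $\mu$ representing $v$ via the $q_0$-realization, lifting to $\tilde{M}_{Q_0}$, and expanding $\cohomologyclass{w}_1\wedge\cohomologyclass{u}[Q_0]$ through the explicit description \eqref{eq:plurisubharmonicity-5} of $\cohomologyclass{w}_1$. Once both variational formulas are in hand, the lemma follows at once; the subtraction is what simultaneously produces the single clean formula and the vanishing $\cohomologyclass{v}_1\wedge\overline{\cohomologyclass{u}[q_0]}=0$ from two a priori distinct bilinear expressions.
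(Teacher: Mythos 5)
Your proposal is correct and follows essentially the same route as the paper: the author derives the symmetric expression \eqref{eq:plurisubharmonicity-14} from Riemann's formula and the antisymmetric expression \eqref{eq:Earle-formula2} from Earle's first variational formula (via the $q_0$-realization and the expansion of $\cohomologyclass{w}_1\wedge\cohomologyclass{u}[Q_0]$), and the lemma is then read off exactly by comparing the two. You also correctly locate the real work in establishing \eqref{eq:Earle-formula2}; the final subtraction step is the same in both.
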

%

\subsection{Levi form of $d_T$}
Let $u[q_\lambda]_\lambda\in \homorel(q_0)$ be the $\lambda$-derivative of
the family $\{\cohomologyclass{u}[q_\lambda]\}_{|\lambda|<\epsilon}$
of the representation.
Namely,
$u[q_\lambda]_\lambda(C)=(u[q_\lambda](C))_\lambda$
for $C\in H_1(\tilde{M}_{q_0},\mathbb{R})^-$.
Notice from the notation in \S\ref{subsec:setting-PSH}
that $u[q_\lambda]_\lambda=\cohomologyclass{v}_1$
at $\lambda=0$.
We also define $u[q_\lambda]_{\overline{\lambda}}$
and $u[q_\lambda]_{\lambda\overline{\lambda}}$ in the same manner.
From 
Lemma \ref{lem:first-variational-formula},
the $\lambda$-derivative of $d(\lambda)=d_T(x_0,g(\lambda))$ 
on a disk $\{|\lambda|<\epsilon\}$ is 
rewritten as
$$
d_\lambda(\lambda)=\frac{\sqrt{-1}\cosh^2(d(\lambda))}{4}
\cohomologyclass{u}[q_\lambda]\wedge \overline{\cohomologyclass{u}[q_\lambda]_{\overline{\lambda}}}.
$$
Therefore,
\begin{align*}
d_{\lambda\overline{\lambda}}(0)
&=\frac{\sqrt{-1}}{4}\cdot 2\sinh(d_0)\cosh(d_0)d_{\overline{\lambda}}\cdot
\cohomologyclass{u}[q_0]\wedge \overline{\cohomologyclass{v}_2}
\\
&\quad
+\frac{\sqrt{-1}\cosh^2(d_0)}{4}
\cohomologyclass{v}_2\wedge \overline{\cohomologyclass{v}_2}
+\frac{\sqrt{-1}\cosh^2(d_0)}{4}\cohomologyclass{u}[q_0]\wedge
\overline{\cohomologyclass{u}[q_0]_{\lambda\overline{\lambda}}\mid_{\lambda=0}}.
\end{align*}
From Lemma \ref{lem:first-variational-formula} again,
\begin{align*}
0
&=\left(
\cohomologyclass{u}[q_\lambda]\wedge \overline{\cohomologyclass{u}[q_\lambda]_\lambda}
\right)_\lambda
=
\cohomologyclass{u}[q_\lambda]_\lambda\wedge \overline{\cohomologyclass{u}[q_\lambda]_\lambda}
+
\cohomologyclass{u}[q_\lambda]\wedge
\overline{\cohomologyclass{u}[q_\lambda]_{\lambda\overline{\lambda}}}
\end{align*}
Thus,
the Laplacian $d_{\lambda\overline{\lambda}}(0)$ of the distance function
$d(\lambda)$ at $\lambda=0$ is
\begin{equation}
\label{eq:Leviform-Teichmullerdist}
\frac{\cosh^3(d_0)\sinh(d_0)}{8}
|\cohomologyclass{u}[q_0]\wedge \overline{\cohomologyclass{v}_2}|^2
+\frac{\sqrt{-1}\cosh^2(d_0)}{4}(
\cohomologyclass{v}_2\wedge \overline{\cohomologyclass{v}_2}
-
\cohomologyclass{v}_1\wedge \overline{\cohomologyclass{v}_1}
).
\end{equation}

\subsection{Complex tangent spaces of the spheres}
\label{subsec:Complex-tangent-space}
We use the notation \eqref{eq:derivative-v}.
Notice that
\begin{align*}
&T_{x_1}\teich_{g,m}\ni v\mapsto \deriv{x_1}{v} \in \homorel_0(q_0) \\
&T_{x_1}\teich_{g,m}\ni v\mapsto \derivbar{x_1}{v} \in \homorel_0(q_0)
\end{align*}
are complex and anti-complex linear respectively.

From \eqref{eq:leviform}, Lemma \ref{lem:first-variational-formula} and
\eqref{eq:Leviform-Teichmullerdist},
the first derivative and the Levi form of the Teichm{\"u}ller distance
function $\teich_{g,m}\ni x\mapsto d_T(x_0,x)$
at $x_1\in \teich_{g,m}-\{x_0\}$
are rewritten as
\begin{align}
\partial\,d_T(x_0,\,\cdot\,)[v]
&=\frac{\sqrt{-1}\cosh^2(d_0)}{4}
\cohomologyclass{u}[q_0]\wedge \overline{\derivbar{x_1}{v}}
\nonumber
\\
\Levi{d_T(x_0,\,\cdot\,)}{v}{\overline{v}}
&=
\frac{\cosh^3(d_0)\sinh(d_0)}{8}
|\cohomologyclass{u}[q_0]\wedge \overline{\derivbar{x_1}{v}}|^2
\label{eq:Leviform-v}
\\
&+\frac{\sqrt{-1}\cosh^2(d_0)}{4}(
\derivbar{x_1}{v}\wedge \overline{\derivbar{x_1}{v}}
-
\deriv{x_1}{v}\wedge \overline{\deriv{x_1}{v}}
)
\nonumber
\end{align}
for $v\in T_{x_1}\teich_{g,m}$.
For $v_1,v_2\in T_{x_1}\teich_{g,m}$,
the Hermitian form of the Levi form 
is represented as
\begin{align}
\Levi{d_T(x_0,\,\cdot\,)}{v_1}{\overline{v_2}}&=
\frac{\cosh^3(d_0)\sinh(d_0)}{8}
\cohomologyclass{u}[q_0]\wedge \overline{\derivbar{x_1}{v_1}}
\cdot \overline{\cohomologyclass{u}[q_0]}\wedge \derivbar{x_1}{v_2}
\label{eq:Leviform-v2}
\\
&+\frac{\sqrt{-1}\cosh^2(d_0)}{4}(
\derivbar{x_1}{v_2}\wedge \overline{\derivbar{x_1}{v_1}}
-
\deriv{x_1}{v_1}\wedge \overline{\deriv{x_1}{v_2}}
)
\nonumber
\end{align}
from \eqref{eq:Leviform-v}.

For $r>0$,
we consider the sphere $S(x_0,r)=\{x\in \teich_{g,m}\mid d_T(x_0,x)=r\}$
of the Teichm{\"u}ller distance.
For $x_1\in S(x_0,r)$,
we define
\begin{align*}
H^{1,0}_{x_1}=H^{1,0}_{x_1}(S(x_0,r))
&=\{v\in T_{x_1}\teich_{g,m}\mid \partial\,d_T(x_0,\,\cdot\,)[v]=0\}
\\
&
=\{v\in T_{x_1}\teich_{g,m}\mid \cohomologyclass{u}[q_0]\wedge \overline{\derivbar{x_1}{v}}=0\}.
\end{align*}
The subspace $H^{1,0}_{x_1}$ is
the \emph{$(1,0)$-part of the complex tangent space of the sphere $S(x_0,r)$}
(cf. \cite[\S7]{Boggess:1991}).
Let $\normalvec{x_1}\in T_{x_1}\teich_{g,m}$ be the tangent vector
associated to the infinitesimal Beltrami differential $\overline{Q_0}/|Q_0|$
where $Q_0$ is the terminal differential of the Teichm{\"u}ller mapping
from $x_0$ to $x_1$.
The vector $\normalvec{x_1}\in T_{x_1}\teich_{g,m}$ is tangent 
to the Teichm{\"u}ller disk passing $x_0$ and $x_1$.
From \eqref{eq:rep-q0-1},
\begin{align}
&\deriv{x_1}{\normalvec{x_1}}=0,\quad
\derivbar{x_1}{\normalvec{x_1}}=
\frac{1}{\sinh(2d_0)}\cohomologyclass{u}[q_0].
\label{eq:derivative-normal-vector}
\end{align}
%

\begin{proposition}[CR tangent space]
\label{prop:CR}
The complex tangent space $H^{1,0}_{x_1}$ is perpendicular to
$\normalvec{x_1}$ with respect to the Levi form of
the Teichm{\"u}ller distance.
\end{proposition}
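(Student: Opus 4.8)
The plan is to evaluate the polarized (Hermitian) Levi form \eqref{eq:Leviform-v2} on the pair consisting of an arbitrary $v\in H^{1,0}_{x_1}$ and the normal vector $\normalvec{x_1}$, and to verify that it vanishes; by definition this is exactly the asserted orthogonality. Concretely I would set $v_1=v$ and $v_2=\normalvec{x_1}$, so that $\normalvec{x_1}$ occupies the conjugated slot, and compute $\Levi{d_T(x_0,\,\cdot\,)}{v}{\overline{\normalvec{x_1}}}$. The two ingredients are the defining relation $\cohomologyclass{u}[q_0]\wedge\overline{\derivbar{x_1}{v}}=0$ characterizing membership in $H^{1,0}_{x_1}$, and the explicit values $\deriv{x_1}{\normalvec{x_1}}=0$ and $\derivbar{x_1}{\normalvec{x_1}}=\frac{1}{\sinh(2d_0)}\cohomologyclass{u}[q_0]$ recorded in \eqref{eq:derivative-normal-vector}.

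First I would examine the first summand of \eqref{eq:Leviform-v2}. With $v_1=v$ its prefactor contains the factor $\cohomologyclass{u}[q_0]\wedge\overline{\derivbar{x_1}{v}}$, which is identically zero on $H^{1,0}_{x_1}$; hence this summand contributes nothing, irrespective of the value of the remaining factor $\overline{\cohomologyclass{u}[q_0]}\wedge\derivbar{x_1}{\normalvec{x_1}}$. Next I would treat the second summand $\frac{\sqrt{-1}\cosh^2(d_0)}{4}\bigl(\derivbar{x_1}{\normalvec{x_1}}\wedge\overline{\derivbar{x_1}{v}}-\deriv{x_1}{v}\wedge\overline{\deriv{x_1}{\normalvec{x_1}}}\bigr)$ by substituting the identities of \eqref{eq:derivative-normal-vector}. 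The term $\deriv{x_1}{v}\wedge\overline{\deriv{x_1}{\normalvec{x_1}}}$ drops out because $\deriv{x_1}{\normalvec{x_1}}=0$, while the surviving term becomes $\derivbar{x_1}{\normalvec{x_1}}\wedge\overline{\derivbar{x_1}{v}}=\frac{1}{\sinh(2d_0)}\,\cohomologyclass{u}[q_0]\wedge\overline{\derivbar{x_1}{v}}=0$ once the real scalar is pulled out of the wedge. Thus both summands vanish and $\Levi{d_T(x_0,\,\cdot\,)}{v}{\overline{\normalvec{x_1}}}=0$ for every $v\in H^{1,0}_{x_1}$, which is precisely the orthogonality claimed.

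I expect no genuine obstacle in this argument: it is essentially a bookkeeping of which terms vanish. The only points meriting a moment's care are the choice of which argument of the Hermitian Levi form to assign to $\normalvec{x_1}$ (placing it in the conjugated slot so that each surviving term displays the vanishing factor $\cohomologyclass{u}[q_0]\wedge\overline{\derivbar{x_1}{v}}$), and the observation that, since the scalar $1/\sinh(2d_0)$ in \eqref{eq:derivative-normal-vector} is a positive real number, it factors cleanly out of the wedge product regardless of the conjugation conventions built into its definition.
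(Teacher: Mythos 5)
Your proposal is correct and is essentially the paper's own proof: the paper simply cites \eqref{eq:Leviform-v2} and \eqref{eq:derivative-normal-vector}, and your computation is the explicit verification that both summands of the polarized Levi form vanish when $v_1=v\in H^{1,0}_{x_1}$ and $v_2=\normalvec{x_1}$. No gaps.
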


\begin{proof}
From \eqref{eq:Leviform-v2} and \eqref{eq:derivative-normal-vector},
$\Levi{d_T(x_0,\,\cdot\,)}{v}{\overline{\normalvec{x_1}}}=0$
for $v\in H^{1,0}_{x_1}$.
\end{proof}

\begin{lemma}[Non-negativity on $H^{1,0}$]
\label{lem:non-negativity}
For $v\in H^{1,0}_{x_1}$,
$$
\sqrt{-1}(\derivbar{x_1}{v}\wedge \overline{\derivbar{x_1}{v}}
-
\deriv{x_1}{v}\wedge \overline{\deriv{x_1}{v}})\ge 0
$$
\end{lemma}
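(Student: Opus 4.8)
The plan is to convert the two wedge products into honest $L^2$-norms of holomorphic one-forms and then to exploit the holomorphicity of $g$ together with the two orthogonality relations forced on $v\in H^{1,0}_{x_1}$. The target inequality will be $\|\alpha_2\|\ge\|\alpha_1\|$ for suitable holomorphic forms $\alpha_1,\alpha_2$ on $\tilde{M}_{q_0}$.

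First I would record that $\cohomologyclass{v}_1=\deriv{x_1}{v}$ and $\cohomologyclass{v}_2=\derivbar{x_1}{v}$ lie in $\homorel_0(q_0)$. Since $\tvector{\cohomologyclass{v}_i}{q_0}=0$, Proposition \ref{prop:hodge-kodaira-decomposition} shows each $\cohomologyclass{v}_i$ is represented by the single holomorphic one-form $\alpha_i=\pi_{q_0}^*(\eta_{\tvector{\overline{\cohomologyclass{v}_i}}{q_0}})/\omega_{q_0}$ in the $(-1)$-eigenspace (cf.\ Corollary \ref{coro:qd-space-homo}). Because the wedge product $\wedge$ on $\homorel(q_0)$ defined above is nothing but the topological cup product $\int_{\tilde{M}_{q_0}}\,\cdot\wedge\cdot$ evaluated on de Rham representatives --- this is exactly the content of the Hodge decomposition used to define it --- one gets $\sqrt{-1}\,\cohomologyclass{v}_i\wedge\overline{\cohomologyclass{v}_i}=\sqrt{-1}\int_{\tilde{M}_{q_0}}\alpha_i\wedge\overline{\alpha_i}\ge 0$, a positive multiple of $\|\alpha_i\|^2$. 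Thus the statement reduces to $\|\alpha_2\|\ge\|\alpha_1\|$. A key consequence of this reformulation is that, the cup product being independent of the complex structure, I am free to evaluate the same wedges through the $Q_0$-Hodge structure on $\tilde{M}_{Q_0}$.

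Next I would assemble the linear constraints. The definition of $H^{1,0}_{x_1}$ gives $\cohomologyclass{u}[q_0]\wedge\overline{\cohomologyclass{v}_2}=0$; feeding this into Lemma \ref{lem:first-variational-formula} forces $d_\lambda=0$, so the coupling \eqref{eq:plurisubharmonicity-5} collapses to $\cohomologyclass{w}_2=a\,\cohomologyclass{v}_2+b\,\overline{\cohomologyclass{v}_1}$, where $a=\tfrac{1+e^{-2d_0}}{2}$ and $b=\tfrac{1-e^{-2d_0}}{2}$ satisfy $a^2-b^2=e^{-2d_0}$ and $b/a=\tanh d_0$. Lemma \ref{lem:first-variational-formula} also supplies $\cohomologyclass{v}_1\wedge\overline{\cohomologyclass{u}[q_0]}=0$, so both $\alpha_1$ and $\alpha_2$ are $L^2$-orthogonal to $\omega_{q_0}$. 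The holomorphicity of $g$ enters through \eqref{eq:plurisubharmonicity-6}, namely $\cohomologyclass{w}_2\in\homorel_0(Q_0)$: the class $\cohomologyclass{w}_2$ is represented by a $Q_0$-holomorphic form $\beta_2$ on $\tilde{M}_{Q_0}$. The engine of the argument is then to solve the collapsed relation for $\cohomologyclass{v}_1=-(a/b)\overline{\cohomologyclass{v}_2}+(1/b)\overline{\cohomologyclass{w}_2}$ and to expand $\sqrt{-1}\,\cohomologyclass{v}_1\wedge\overline{\cohomologyclass{v}_1}$ as a cup product evaluated through the $Q_0$-Hodge decomposition $\cohomologyclass{v}_2=\pi^{+}\cohomologyclass{v}_2+\pi^{-}\cohomologyclass{v}_2$. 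Using that $\beta_2$ is $Q_0$-holomorphic (holomorphic-against-holomorphic cup products vanish, and the intersection form is positive on the holomorphic piece and negative on its conjugate), $\|\alpha_1\|^2$ is expressed through $\|\alpha_2\|^2$, $\|\beta_2\|^2$ and one cross term $\operatorname{Re}\langle\pi^{+}\cohomologyclass{v}_2,\beta_2\rangle$; the two orthogonality relations to $\omega_{q_0}$, transported to $\tilde{M}_{Q_0}$ by the Teichm\"uller stretch (whose Beltrami coefficient is the constant $\tanh d_0=b/a$ in the flat coordinate $\int\omega_{q_0}$), pin down this cross term, and the contraction $a>b>0$ yields $\|\alpha_2\|^2-\|\alpha_1\|^2\ge 0$ via an elementary bound of the shape $(a^2+b^2)(\,\cdot\,)-4ab\operatorname{Re}\langle\cdot,\cdot\rangle\ge(a-b)^2(\,\cdot\,)\ge 0$.

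The hard part will be exactly this last step. The two norms $\|\alpha_i\|$ are taken for the $q_0$-complex structure, while the holomorphicity hypothesis is phrased for the $Q_0$-complex structure on the \emph{same} cohomology $H^1(\tilde{M}_{q_0},\mathbb{R})^-\cong H^1(\tilde{M}_{Q_0},\mathbb{R})^-$, so the inequality is not formal: it records the Levi-pseudoconvexity of the Teichm\"uller sphere $S(x_0,r)$ and is ultimately forced by the quasiconformal distortion of the period structure --- the same mechanism behind the extremal-length comparison $e^{-2d_0}\ext_{x_0}(F)\le\ext_{x_1}(F)$ in the proof of Theorem \ref{thm:stratificationofT}. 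Keeping the bookkeeping between the $q_0$- and $Q_0$-Hodge decompositions honest, and checking that the orthogonality constraints genuinely annihilate the indefinite cross-terms so that only the definite contributions survive, is where the real care is required.
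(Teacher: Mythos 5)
You take a genuinely different route from the paper: the paper's own proof of Lemma~\ref{lem:non-negativity} is three lines long --- it quotes the known plurisubharmonicity of $x\mapsto d_T(x_0,x)$ from \cite[Corollary 1.2]{Miyachi:2017} (or \cite[Corollary 3]{Krushkal:1992}) and reads the inequality off \eqref{eq:Leviform-v}, since the first term there vanishes on $H^{1,0}_{x_1}$. Your attempt at a self-contained Hodge-theoretic proof is therefore worth having, and its first half is sound: since $\cohomologyclass{v}_1,\cohomologyclass{v}_2\in\homorel_0(q_0)$ are represented by $q_0$-holomorphic forms $\alpha_1,\alpha_2$, the statement does reduce to $\|\alpha_2\|\ge\|\alpha_1\|$; $d_\lambda=0$ does collapse \eqref{eq:plurisubharmonicity-5} to $\cohomologyclass{w}_2=a\cohomologyclass{v}_2+b\overline{\cohomologyclass{v}_1}$ with $a=\tfrac{1+e^{-2d_0}}{2}$, $b=\tfrac{1-e^{-2d_0}}{2}$; and \eqref{eq:plurisubharmonicity-6} does say this class is represented by a $Q_0$-holomorphic form $\beta_2$. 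This identity exhibits $a\cohomologyclass{v}_2$ and $b\overline{\cohomologyclass{v}_1}$ as precisely the $q_0$-holomorphic and $q_0$-antiholomorphic parts of the $Q_0$-holomorphic class $\cohomologyclass{w}_2$, and that is the right object to stare at.

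The gap is in the last step. The bookkeeping you describe --- substituting $\cohomologyclass{v}_1=\tfrac1b\overline{\cohomologyclass{w}_2}-\tfrac ab\overline{\cohomologyclass{v}_2}$, expanding through the $Q_0$-Hodge decomposition, and ``pinning down'' the cross term --- only ever uses the Riemann bilinear relations for the two complex structures, and these produce an identity, not an inequality. Concretely, with $h(\cohomologyclass{x},\cohomologyclass{y})=\sqrt{-1}\,\cohomologyclass{x}\wedge\overline{\cohomologyclass{y}}$ one finds $h(\cohomologyclass{v}_2,\cohomologyclass{w}_2)=a\,h(\cohomologyclass{v}_2,\cohomologyclass{v}_2)$ already from $\cohomologyclass{v}_2\wedge\cohomologyclass{v}_1=0$ (both $q_0$-holomorphic); the orthogonality of $\alpha_1,\alpha_2$ to $\omega_{q_0}$ plays no role in this, and substituting the cross term back into your expansion returns $\|\alpha_1\|^2=\|\alpha_1\|^2$. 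The only non-tautological soft input, positivity of $h$ on $Q_0$-holomorphic classes, gives $a^2\|\alpha_2\|^2-b^2\|\alpha_1\|^2=h(\cohomologyclass{w}_2,\cohomologyclass{w}_2)\ge0$, i.e.\ only $\|\alpha_1\|\le\coth(d_0)\|\alpha_2\|$, which is strictly weaker than what is needed. The missing ingredient is the quantitative Gr\"otzsch-type estimate that you allude to but never actually deploy: pulling $\beta_2$ back by the lifted Teichm\"uller map (whose Beltrami coefficient has constant modulus $k=\tanh d_0=b/a$) gives a closed form $\theta$ on $\tilde M_{q_0}$ with $|\theta^{0,1}|=k\,|\theta^{1,0}|$ pointwise, and since harmonic projection does not increase the $L^2$-norm of either type-component, the $q_0$-antiholomorphic part of $[\theta]=\cohomologyclass{w}_2$ satisfies $b\|\alpha_1\|\le k\cdot a\|\alpha_2\|=b\|\alpha_2\|$, which is the lemma. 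With that one inequality stated and proved, your argument closes in a line; without it, the proposed cancellation of ``indefinite cross-terms'' cannot produce the sharp constant $1$ in place of $\coth(d_0)$.
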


\begin{proof}
%
%
%
%
From \cite[Corollary 1.2]{Miyachi:2017},
the Teichm{\"u}ller distance function $\teich_{g,m}\ni x\mapsto
d_T(x_0,x)$ is plurisubharmonic
(see also \cite[Corollary 3]{Krushkal:1992}).
Hence the Levi-form of the distance function is non-negative
on $H^{1,0}_{x_1}$.
The assertion follows from \eqref{eq:Leviform-v}.
\end{proof}

\subsection{Log-tanh of $d_T$ is plurisubharmonic on $\teich_\infty$}
\label{subsec:log-tanh-PSH}
We set
$$
u_{x_0}(x)=\log\tanh(d_T(x_0,x))
$$
for $x\in \teich_{g,m}$.
From \eqref{eq:Teichmuller-homeo-distance},
$$
u_{x_0}(\TeichHomeo_{x_0}(q))=\log\|q\|
$$
for $q\in \mathcal{UQ}_{x_0}$.
Recall that $\teich_\infty$ 
is the top stratum of the stratification of $\teich_{g,m}-\{x_0\}$
which is obtained in Theorem \ref{thm:stratificationofT}.

\begin{theorem}[Plurisubharmonicity]
\label{thm:PSH-log-tanh}
$u_{x_0}$ is plurisubharmonic on $\teich_\infty$.
\end{theorem}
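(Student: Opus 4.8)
The plan is to compute the Levi form of $u_{x_0}=\log\tanh d_T(x_0,\cdot)$ directly at a point $x_1\in\teich_\infty$ and to show it is nonnegative on the whole holomorphic tangent space; since $q_0$ is generic, $\TeichHomeo_{x_0}$ is a real-analytic diffeomorphism near $q_0$ by Theorem~\ref{thm:stratificationofT}, so $u_{x_0}$ is real-analytic (in particular $C^2$) on $\teich_\infty$ and its Levi form is defined. Writing $u_{x_0}\circ g(\lambda)=\log\tanh d(\lambda)$ for the holomorphic disk $g$ of \S\ref{subsec:setting-PSH}, the chain rule gives, at $\lambda=0$, the expression $2\,d_{\lambda\overline\lambda}/\sinh(2d_0)-4\cosh(2d_0)|d_\lambda|^2/\sinh^2(2d_0)$. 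First I would substitute the first variation from Lemma~\ref{lem:first-variational-formula} and the Laplacian \eqref{eq:Leviform-Teichmullerdist}, then reduce with $\cosh(2d_0)=2\sinh^2(d_0)+1$. After the cancellation that collapses the coefficient of $|\cohomologyclass{u}[q_0]\wedge\overline{\derivbar{x_1}{v}}|^2$, the expected outcome is
\begin{equation*}
\Levi{u_{x_0}}{v}{\overline v}=-\frac{\cosh^2(d_0)}{16\sinh^2(d_0)}\bigl|\cohomologyclass{u}[q_0]\wedge\overline{\derivbar{x_1}{v}}\bigr|^2+\frac{\sqrt{-1}\cosh(d_0)}{4\sinh(d_0)}\bigl(\derivbar{x_1}{v}\wedge\overline{\derivbar{x_1}{v}}-\deriv{x_1}{v}\wedge\overline{\deriv{x_1}{v}}\bigr).
\end{equation*}

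Next I would exploit the splitting $T_{x_1}\teich_{g,m}=\mathbb{C}\,\normalvec{x_1}\oplus H^{1,0}_{x_1}$, which is valid because the $(1,0)$-form $\partial\,d_T(x_0,\cdot)$ does not vanish on $\normalvec{x_1}$: a short computation with \eqref{eq:derivative-normal-vector} and \eqref{eq:plurisubharmonicity-10} yields $\partial\,d_T(x_0,\cdot)[\normalvec{x_1}]=1/2$, so $H^{1,0}_{x_1}$ is a complex hyperplane transverse to $\normalvec{x_1}$. The first key point is that the Levi form of $u_{x_0}$ vanishes in the normal direction: substituting $\deriv{x_1}{\normalvec{x_1}}=0$ and $\derivbar{x_1}{\normalvec{x_1}}=\cohomologyclass{u}[q_0]/\sinh(2d_0)$ from \eqref{eq:derivative-normal-vector} into the formula above and using $\sqrt{-1}\,\cohomologyclass{u}[q_0]\wedge\overline{\cohomologyclass{u}[q_0]}=4\|q_0\|$ from \eqref{eq:plurisubharmonicity-10}, the two terms cancel and $\Levi{u_{x_0}}{\normalvec{x_1}}{\overline{\normalvec{x_1}}}=0$. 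Conceptually this is forced, since by \eqref{eq:rep-q0-1} the restriction of $u_{x_0}$ to the Teichm\"uller disk through $x_0,x_1$ is $\log$ of the modulus of a Blaschke factor, hence harmonic.

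The second key point is that the potentially negative term is carried entirely by the normal direction. Polarizing the formula above exactly as \eqref{eq:Leviform-v2} is obtained from \eqref{eq:Leviform-v}, I would check that $H^{1,0}_{x_1}$ is Levi-orthogonal to $\normalvec{x_1}$ for $u_{x_0}$ as well: for $v_H\in H^{1,0}_{x_1}$ the factor $\cohomologyclass{u}[q_0]\wedge\overline{\derivbar{x_1}{v_H}}$ vanishes by definition of $H^{1,0}_{x_1}$, while $\deriv{x_1}{\normalvec{x_1}}=0$ and $\derivbar{x_1}{\normalvec{x_1}}\propto\cohomologyclass{u}[q_0]$ kill the remaining cross terms, exactly as in the proof of Proposition~\ref{prop:CR}. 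Thus the Hermitian Levi form is block-diagonal for the splitting, so for $v=a\,\normalvec{x_1}+v_H$ one gets $\Levi{u_{x_0}}{v}{\overline v}=\Levi{u_{x_0}}{v_H}{\overline{v_H}}$. Finally, on $H^{1,0}_{x_1}$ the negative term drops out, leaving $\tfrac{\cosh(d_0)}{4\sinh(d_0)}\,\sqrt{-1}\bigl(\derivbar{x_1}{v_H}\wedge\overline{\derivbar{x_1}{v_H}}-\deriv{x_1}{v_H}\wedge\overline{\deriv{x_1}{v_H}}\bigr)$, which is nonnegative by Lemma~\ref{lem:non-negativity}. Hence $\Levi{u_{x_0}}{v}{\overline v}\ge 0$ for every $v\in T_{x_1}\teich_{g,m}$, proving that $u_{x_0}$ is plurisubharmonic on $\teich_\infty$.

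I expect the main obstacle to be the algebraic simplification in the first step: establishing the exact cancellation in the coefficient of $|\cohomologyclass{u}[q_0]\wedge\overline{\derivbar{x_1}{v}}|^2$, so that the negative contribution is precisely the one annihilated on $H^{1,0}_{x_1}$ and cancelled along $\normalvec{x_1}$. Everything downstream is then structural: the genuine content is that all of the non-negativity is inherited, through Lemma~\ref{lem:non-negativity}, from the already-known plurisubharmonicity of $d_T$ restricted to the complex tangent space of the Teichm\"uller sphere, while the logarithmic correction exactly flattens the transverse (Teichm\"uller disk) direction.
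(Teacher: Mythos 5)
Your proposal is correct and follows essentially the same route as the paper: the same Levi form \eqref{eq:Levi-Green} of $u_{x_0}$ (your coefficients $\cosh^2(d_0)/(16\sinh^2(d_0))=1/(16\tanh^2(d_0))$ and $\cosh(d_0)/(4\sinh(d_0))=1/(4\tanh(d_0))$ agree with it), the same vanishing of $\Levi{u_{x_0}}{\normalvec{x_1}}{\overline{\normalvec{x_1}}}$ via \eqref{eq:derivative-normal-vector} and \eqref{eq:plurisubharmonicity-10}, the same Levi-orthogonality of $\normalvec{x_1}$ and $H^{1,0}_{x_1}$ as in Proposition \ref{prop:CR}, and the same appeal to Lemma \ref{lem:non-negativity} on $H^{1,0}_{x_1}$. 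The only additions are cosmetic (the explicit check $\partial\,d_T(x_0,\cdot)[\normalvec{x_1}]=1/2$ ensuring the splitting, and the Blaschke-factor interpretation), which the paper leaves implicit.
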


\begin{proof}
Let $x_1\in \teich_\infty$ and $q_0\in \teich_{x_0}$ with $x_1=\TeichHomeo_{x_0}(q_0)$ as above.
Since $u_{x_0}(g(\lambda))=\log \|q_\lambda\|$
for $\lambda\in \{|\lambda|<\epsilon\}$,
from the direct calculation,
the Levi form of $u_{x_0}$
is given as
\begin{align}
\label{eq:Levi-Green}
\Levi{u_{x_0}}{v}{\overline{v}}
&=\frac{\sqrt{-1}}{4\tanh(d_0)}\left(
\derivbar{x_1}{v}\wedge \overline{\derivbar{x_1}{v}}
-
\deriv{x_1}{v}\wedge \overline{\deriv{x_1}{v}}
\right)
\\
&\qquad\qquad
-
\frac{1}{16\tanh^2(d_0)}
|\cohomologyclass{u}[q_0]\wedge \overline{\derivbar{x_1}{v}}|^2
\nonumber
\end{align}
for $v\in T_{x_1}\teich_{g,m}$.
From
\eqref{eq:plurisubharmonicity-13} and \eqref{eq:plurisubharmonicity-10},
\eqref{eq:Leviform-v}
and \eqref{eq:derivative-normal-vector},
\begin{align*}
&\Levi{u_{x_0}}{\normalvec{x_1}}{\overline{\normalvec{x_1}}}
\\
&=\frac{\sqrt{-1}}{4\tanh(d_0)}
\frac{\cohomologyclass{u}[q_0]}{\sinh(2d_0)}\wedge 
\overline{\frac{\cohomologyclass{u}[q_0]}{\sinh(2d_0)}}
-
\frac{1}{16\tanh^2(d_0)}
\left|
\cohomologyclass{u}[q_0]\wedge 
\overline{\frac{\cohomologyclass{u}[q_0]}{\sinh(2d_0)}}
\right|^2
=0.
\end{align*}
From Lemma \ref{lem:non-negativity},
the Levi form of $u_{x_0}$ is non-negative on $H^{1,0}_{x_1}$.
Applying the calculation in Proposition \ref{prop:CR}
to \eqref{eq:Levi-Green},
we also deduce that
the normal vector $\normalvec{x_1}$ is perpendicular
to $H^{1,0}_{x_1}$ with respect 
to the Levi form of $u_{x_0}$.
Therefore,
the Levi form of $u_{x_0}$ is non-negative
on the whole $T_{x_1}\teich_{g,m}$.
\end{proof}

\subsection{Proof of Corollary \ref{coro:Demailly-distance}}
\label{subsec:proof-corollary}
As mentioned in Introduction, we identify $\teich_{g,m}$ with the Bers slice with base point $x_0\in \teich_{g,m}$. Then, $\teich_{g,m}$ is a hyperconvex domain. Notice that for $x,y\in \teich_{g,m}$,
\begin{align*}
\frac{g_{\teich_{g,m}}(x,z)}{g_{\teich_{g,m}}(y,z)}
&
=e^{-2(d_{T}(x,z)-d_{T}(y,z))}(1+o(1))
\le e^{2d_{T}(x,y)}(1+o(1))
\end{align*}
when $z\to \partial \teich_{g,m}$.
Therefore, $\boldsymbol{\delta}_{\teich_{g,m}}(x,y)  \le 2d_{T}(x,y)$.

On the other hand,
when we consider a divergent sequence $\{x_{n}\}_{n}$ in $\teich_{g,m}$
along the Teichm{\"u}ller geodesic connecting $x$ and $y$
with $d_{T}(x,x_{n})<d_{T}(y,x_{n})$,
we have
\begin{align*}
\frac{g_{\teich_{g,m}}(x,x_{n})}{g_{\teich_{g,m}}(y,x_{n})}
&
=e^{-2(d_{T}(x,x_{n})-d_{T}(y,x_{n}))}(1+o(1))
=e^{2d_{T}(x,y)}(1+o(1))
\end{align*}
as $n\to \infty$,
and hence $2d_{T}(x,y)\le \boldsymbol{\delta}_{\teich_{g,m}}(x,y)$.

\section{Topological description of the Levi form}
\label{sec:Topological-description}
The space $\mathcal{MF}$ carries a natural symplectic structure
with the \emph{Thurston symplectic form}  $\omega_{Th}$ (cf. \cite{Penner_Harer:1992}).
Dumas \cite[Theorem 5.3]{Dumas:2015} introduced a K\"ahler (symplectic)
structure on each stratum of $\mathcal{Q}_{x_0}$
discussed in \S\ref{subsec:stratification-Qy0}
which defined from the Levi-form of the $L^1$-norm on $\mathcal{Q}_{x_0}$,
and observed that the Hubbard-Masur homeomorphism
\eqref{eq:HM-symp}
is a real-analytic symplectomorphism on each stratum of $\mathcal{Q}_{x_0}$
(cf. \cite[Theorem 5.8]{Dumas:2015}).
In fact,
when $q_0\in\mathcal{Q}_{x_0}$ is generic,
Dumas showed that
the Hubbard-Masur homeomorphism
\eqref{eq:HM-symp} is a diffeomorphism around $q_0$ and satisfies
\begin{align}
\label{eq:Dumas-symplectic-form}
\omega_{Th}(d\mathcal{V}_{x_0}(\psi_1),d\mathcal{V}_{x_0}(\psi_2))
&=
{\rm Im}\int_{M_0}\frac{\psi_1\overline{\psi_2}}{4|q_0|}
\\
&
=\frac{1}{8}\int_{\tilde{M}_{q_0}}
{\rm Re}
\left(
\frac{\pi_{q_0}^*(\psi_1)}{\omega_{q_0}}
\right)
\wedge
{\rm Re}\left(
\frac{\pi_{q_0}^*(\psi_2)}{\omega_{q_0}}
\right)
\nonumber
\end{align}
for $\psi_1,\psi_2\in T_{q_0}\mathcal{Q}_{x_0}=\mathcal{Q}_{x_0}$
(cf. \eqref{eq:Hermite-form-cover} and \cite[\S 5.2]{Dumas:2015}).
\begin{remark}
\label{remark:HM-homeo}
Dumas \cite{Dumas:2015} discussed the Hubbard-Masur homeomorphism
\eqref{eq:HM-symp} by assigning the horizontal foliations to quadratic differentials
in accordance with Hubbard and Masur's original discussion.
The original Hubbard-Masur homeomorphism
$\mathcal{H}_{x_0}\colon \mathcal{Q}_{x_0}\to \mathcal{MF}$
satisfies $\mathcal{V}_{x_0}(q)=\mathcal{H}_{x_0}(-q)$.
Hence,
$d\mathcal{V}_{x_0}(\psi)=-d\mathcal{H}_{x_0}(\psi)$
for $\psi\in \mathcal{Q}_{x_0}=T_{q_0}\mathcal{Q}_{x_0}$.
Thus,
the formula \eqref{eq:Dumas-symplectic-form} also holds in our case.
\end{remark}

Let us go back to the notion in \S\ref{subsec:setting-PSH}.
Notice that
\begin{align}
\label{eq:Levi-1}
\cohomologyclass{v}_2\wedge \overline{\cohomologyclass{v}_2}
-
\cohomologyclass{v}_1\wedge \overline{\cohomologyclass{v}_1}
&=
(\cohomologyclass{v}_2+\overline{\cohomologyclass{v}}_1)\wedge
(\overline{\cohomologyclass{v}}_2+\cohomologyclass{v}_1)
=\left(
{\rm Re}(\cohomologyclass{u}[q_\lambda])
\right)_{\overline{\lambda}}
\wedge
\overline{
\left(
{\rm Re}(\cohomologyclass{u}[q_\lambda])
\right)_{\overline{\lambda}}}
\\
&
=-\frac{\sqrt{-1}}{2}
{\rm Re}
\left(
\cohomologyclass{u}[q_\lambda]_{\xi_1}
\right)
\wedge
{\rm Re}
\left(
\cohomologyclass{u}[q_\lambda]_{\xi_2}
\right)
\nonumber
\end{align}
at $\lambda=0$,
where $\lambda=\xi_1+\xi_2\sqrt{-1}$.
Therefore,
when
$$
q_\lambda=\TeichHomeo_{x_0}(g(\lambda))
=q_0+\xi_1\psi_1+\xi_2\psi_2+o(|\lambda|)
$$
as $\lambda=\xi_1+\xi_2\sqrt{-1}\to 0$,
from \eqref{eq:plurisubharmonicity-13},
\eqref{eq:plurisubharmonicity-10},
\eqref{eq:Dumas-symplectic-form} and \eqref{eq:Levi-1},
we deduce the \emph{topological description} of the Levi form of the pluricomplex Green function $u_{x_0}$ on $\teich_{g,m}$:
$$
\Levi{u_{x_0}}{v}{\overline{v}}
=\frac{1}{\|q_0\|}
\omega_{Th}(d\mathcal{V}_{x_0}(\psi_1),d\mathcal{V}_{x_0}(\psi_2))
-
\frac{|\cohomologyclass{u}[q_0]\wedge \overline{\derivbar{x_1}{v}}|^2}{16\|q_0\|^2},
$$
where
\begin{align*}
\cohomologyclass{u}[q_0]\wedge \overline{\derivbar{x_1}{v}}
&=8\left(\omega_{Th}(d\mathcal{V}_{x_0}(q_0),d\mathcal{V}_{x_0}(\psi_2-\sqrt{-1}\psi_1))\right.
\\
&\qquad \qquad
+\left.\sqrt{-1}\,\omega_{Th}(d\mathcal{V}_{x_0}(q_0),d\mathcal{V}_{x_0}(\psi_1+\sqrt{-1}\psi_2))\right).
\end{align*}
In particular,
when $v\in H^{1,0}_{x_1}$ for $x_1=g(0)$,
we conclude
\begin{equation*}
\label{eq:levi-3}
\Levi{u_{x_0}}{v}{\overline{v}}
=\frac{1}{\|q_0\|}
\omega_{Th}(d\mathcal{V}_{x_0}(\psi_1),d\mathcal{V}_{x_0}(\psi_2)).
\end{equation*}
As a corollary,
we deduce
\begin{equation}
\label{eq:levi-4}
\omega_{Th}(d\mathcal{V}_{x_0}(\psi_1),d\mathcal{V}_{x_0}(\psi_2))
\ge 
\frac{|\cohomologyclass{u}[q_0]\wedge \overline{\derivbar{x_1}{v}}|^2}{16\|q_0\|}
\ge 0.
\end{equation}
From the definition,
$d\mathcal{V}_{x_0}(\psi_i)\in T_{\verticalfoliation{q_0}}\mathcal{MF}$
is the infinitesimal transverse cocycle (in the sense of Bonahon \cite{Bonahon:1996})
of the initial differentials $\{q_\lambda\}_{\lambda}$
associated along the $\xi_i$-direction at $\lambda=0$.
Thus,
the non-negativity derived from \eqref{eq:levi-4}
of the Thurston symplectic pairing
between the infinitesimal transverse cocycles $d\mathcal{V}_{x_0}(\psi_1)$
and $d\mathcal{V}_{x_0}(\psi_2)$ is a necessary condition
to describe complex-analytic deformations of Teichm{\"u}ller 
mappings
from the topological aspect in Teichm{\"u}ller theory.

\section{Appendix}
\label{sec:appendix}
In this section, we shall check Proposition \ref{prop:Blanchet}.
As noticed before, Proposition \ref{prop:Blanchet} is proved with a condition that $u$ is of $C^2$ by Blanchet \cite{Blanchet:1995}.
Chirka \cite{Chirka:2003} extends Blanchet's result as follows: When $\Omega$ and $V$ are taken as Proposition \ref{prop:Blanchet}, a subharmonic function on $\Omega$ which is plurisubharmonic on $\Omega\setminus V$ is plurisubharmonic on $\Omega$ without any smoothness condition. Hence, Proposition \ref{prop:Blanchet} follows from the following proposition which will be well-known. However, the author can not find any suitable reference and gives a proof for the completeness.

\begin{proposition}
\label{prop:subharmonicitiy}
Let $\Omega$ be a domain in $\mathbb{R}^N$ and $V$ be a $C^1$-submanifold of $\Omega$ with positive codimension.
When a $C^1$-function $u$ on $\Omega$ is subharmonic on $\Omega\setminus V$, $u$ is subharmonic on $\Omega$.
\end{proposition}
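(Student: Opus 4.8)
The plan is to reduce the statement to an assertion about the distributional Laplacian. Recall that a continuous (hence upper semicontinuous and locally integrable) function is subharmonic on an open set if and only if its Laplacian is a non-negative distribution there, i.e. $\int u\,\Delta\varphi\,dx\ge 0$ for every non-negative $\varphi\in C^\infty_c$ (cf. \cite{Klimek:1991}). Since $u\in C^1(\Omega)$, one integration by parts gives $\int_\Omega u\,\Delta\varphi\,dx=-\int_\Omega\nabla u\cdot\nabla\varphi\,dx$, and the whole task is to show that $-\int_\Omega\nabla u\cdot\nabla\varphi\,dx\ge 0$ for every non-negative $\varphi\in C^\infty_c(\Omega)$.

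Fix such a $\varphi$. First I would excise a shrinking neighbourhood of $V$. Let $\rho(x)=\mathrm{dist}(x,V)$, choose an even $\eta\in C^\infty(\mathbb{R})$ with $0\le\eta\le 1$, $\eta(t)=0$ for $|t|\le 1$ and $\eta(t)=1$ for $|t|\ge 2$, and set $\chi_\epsilon=\eta(\rho/\epsilon)$ (smoothed if necessary; the Lipschitz bound $|\nabla\rho|\le 1$ yields $|\nabla\chi_\epsilon|\le C/\epsilon$). Then $\chi_\epsilon$ vanishes near $V$, equals $1$ off the tube $T_\epsilon=\{\rho<2\epsilon\}$, and $\varphi\chi_\epsilon$ is a non-negative test function supported in $\Omega\setminus V$. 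Using the subharmonicity of $u$ on $\Omega\setminus V$ and integrating by parts,
\[
0\le -\int_\Omega\nabla u\cdot\nabla(\varphi\chi_\epsilon)\,dx
=-\int_\Omega\chi_\epsilon\,\nabla u\cdot\nabla\varphi\,dx-\int_\Omega\varphi\,\nabla u\cdot\nabla\chi_\epsilon\,dx .
\]
As $\epsilon\to 0$ the first term tends to $-\int_\Omega\nabla u\cdot\nabla\varphi\,dx$ by dominated convergence, so it suffices to prove that the error $E_\epsilon:=\int_\Omega\varphi\,\nabla u\cdot\nabla\chi_\epsilon\,dx$ tends to $0$.

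Controlling $E_\epsilon$ is the heart of the matter, and its behaviour depends on the codimension $k$ of $V$. Since $\nabla\chi_\epsilon$ is supported in $T_\epsilon$ with $|\nabla\chi_\epsilon|\le C/\epsilon$, and $\varphi\,\nabla u$ is bounded (this is where $u\in C^1$ enters), one has $|E_\epsilon|\le C\epsilon^{-1}\,\mathrm{vol}(T_\epsilon\cap\mathrm{supp}\,\varphi)=O(\epsilon^{k-1})$ because the tube volume is $O(\epsilon^{k})$; hence $E_\epsilon\to 0$ whenever $k\ge 2$. The hard part — and the main obstacle — is the hypersurface case $k=1$, where this crude bound only gives $|E_\epsilon|=O(1)$ and a genuine cancellation is needed. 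Here I would work locally with $V=\{x_N=0\}$ and the two-sided cutoff $\chi_\epsilon=\eta(x_N/\epsilon)$, so that the integrand becomes $\frac1\epsilon\eta'(x_N/\epsilon)\,\varphi\,\partial_{x_N}u$; after rescaling $x_N=\epsilon s$ and letting $\epsilon\to 0$, the continuity of $\partial_{x_N}u$ across $V$ gives $E_\epsilon\to\big(\int_{\mathbb{R}}\eta'\big)\int_{\{x_N=0\}}\varphi\,\partial_{x_N}u\,dx'$, which vanishes because $\eta$ is even and so $\int_{\mathbb{R}}\eta'=0$. This cancellation is exactly the fact that a $C^1$ function has no jump in its normal derivative, so that $\Delta u$ carries no singular part concentrated on $V$.

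Combining the two regimes, $E_\epsilon\to 0$ in every positive codimension, and passing to the limit yields $-\int_\Omega\nabla u\cdot\nabla\varphi\,dx\ge 0$ for all non-negative $\varphi\in C^\infty_c(\Omega)$; that is, $\Delta u\ge 0$ on $\Omega$ in the distributional sense, so $u$ is subharmonic on $\Omega$. The one point demanding care is that $V$ is merely of class $C^1$, so $\rho$ need not be differentiable: in codimension one this is bypassed by the flat two-sided cutoff above (equivalently by a $C^1$ defining function together with $\int\eta'=0$), while in higher codimension only the $1$-Lipschitz property of $\rho$ and the tube estimate $\mathrm{vol}(T_\epsilon)=O(\epsilon^{k})$ are used, both of which persist for a $C^1$ submanifold.
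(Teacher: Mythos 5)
Your proof is correct, and it reaches the same underlying cancellation as the paper's --- namely that a $C^1$ function has no jump in its normal derivative across $V$, so $\Delta u$ acquires no singular part on $V$ --- but the implementation is genuinely different. The paper first reduces to the case where $V$ is a hypersurface and $\Omega$ is a ball, mollifies $u$ itself, applies Green's formula separately on the two components $\Omega^{\pm}_\delta$ obtained by removing a slab $N(\delta)$ around $V$, and then lets the mollification parameter and $\delta$ tend to zero in succession; the vanishing of the singular contribution appears there as the cancellation of the two boundary integrals over the parallel translates $F^{\pm}_\delta$, using continuity of $u$ and $D_n u$ across $V$. You instead leave $u$ alone, insert a cutoff $\chi_\epsilon$ into the test function, and must kill the single error term $E_\epsilon=\int\varphi\,\nabla u\cdot\nabla\chi_\epsilon$. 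This buys you a genuinely cheaper treatment of codimension $k\ge 2$ (the crude bound $\epsilon^{-1}\cdot O(\epsilon^k)$ suffices, with no need for the paper's implicit reduction of higher-codimensional $V$ to a containing hypersurface), at the cost of a slightly more delicate rescaling argument in codimension one, where your $\int_{\mathbb R}\eta'=0$ plays exactly the role of the paper's cancellation of the two one-sided flux integrals. Two small points you should make explicit in a full write-up: (i) the test function $\varphi\chi_\epsilon$ is only Lipschitz, so the inequality $-\int\nabla u\cdot\nabla(\varphi\chi_\epsilon)\ge 0$ on $\Omega\setminus V$ requires a one-line approximation of $\varphi\chi_\epsilon$ by smooth nonnegative test functions (harmless since $\nabla u$ is continuous); and (ii) in the codimension-one case you cannot flatten a $C^1$ hypersurface by a subharmonicity-preserving change of variables, so the cutoff should be built from a local $C^1$ defining function $r(x)=x_N-g(x')$ (with a partition of unity over $\operatorname{supp}\varphi$), after which the substitution $x_N=g(x')+\epsilon s$ gives $E_\epsilon\to\bigl(\int\eta'\bigr)\int\varphi\,(\partial_Nu-\nabla'g\cdot\nabla'u)\,dx'=0$ exactly as you indicate.
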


\begin{proof}
Since the subharmonicity is a local condition, we may assume that $\Omega$ is a ball centered at $z_0$ and $V$ is a hypersurface (\cite[Theorem 2.4.1]{Klimek:1991}). Let $z_0\in V$ and ${\bf n}$ be the normal vector to $V$ at $z_0$.  By applying the inverse mapping theorem, we may also assume that 
\begin{itemize}
\item for small $\delta_0>0$, $V_t=V+t{\bf n}$ ($|t|<\delta_0$) separates $\Omega$ into two domains, and intersects $\partial \Omega$ transversely;
\item $V_t\cap V_{t'}=\emptyset$ ($t\ne t'$); and
\item for any $0<\delta<\delta_0$, the union $N(\delta)=\cup_{|t|<\delta}V_t$ forms an open set. 
\end{itemize}
For a domain $E$ and a constant $r>0$ we set $E_r=\{x\in E\mid {\rm dist}(x,\partial E)>r\}$.
Let $\Omega_\delta\setminus N(\delta)=\Omega^+_\delta\sqcup\Omega^-_\delta$ and $F^s_\delta=\overline{\Omega^s_\delta}\cap V_{(-1)^s \delta}$ for $s=\pm$ (we assume $F^\pm_\delta\ne \emptyset$). Then, $\partial \Omega^s_\delta=(\partial \Omega_\delta\cap \overline{\Omega^s_\delta})\cup F^s_\delta$.

Let $\epsilon>0$. Let $u_\epsilon=\chi_\epsilon*u$ be the convolution where $\chi_\epsilon$ is a standard smoothing kernel (mollifier) supported on the ball centered at the origin with radius $\epsilon$. Then, $u_\epsilon$ is a smooth function defined on $\Omega_\epsilon$, and subharmonic on $(\Omega\setminus V)_\epsilon$. Furthermore, since $u$ is of class $C^1$, $u_\epsilon$ and its partial derivative $(u_\epsilon)_{x_i}=\chi_\epsilon*u_{x_i}$ converge to $u$ and $u_{x_i}$ uniformly on any compact subset of $\Omega$ respectively (cf. \cite[\S2.5]{Klimek:1991}). 

Let $\varphi\in C_0^\infty(\Omega)$ with $\varphi\ge 0$. Fix $\delta>0$ with $\delta<\delta_0$ such that ${\rm supp}(\varphi)\subset \Omega_\delta$ for $s=\pm$. When $\delta$ is sufficiently small, $V_t$ ($|t|\le \delta$) intersects $\partial \Omega_\delta$ transversely. If $\epsilon>0$ is sufficiently small, $\overline{\Omega^s_\delta}\subset (\Omega\setminus V)_\epsilon$ for $s=\pm$ because $\Omega^s_\delta$ is relatively compact in $\Omega\setminus V$. Since $\varphi\equiv 0$ around $\partial \Omega_\delta$, by the Green formula,
$$
0\le \int_{\Omega^{s}_\delta}\varphi\Delta u_\epsilon dV=\int_{F^s_\delta}(\varphi (D_n u_\epsilon)-u_\epsilon(D_n\varphi))dS+\int_{\Omega^{s}_\delta}u_\epsilon\Delta \varphi dV
$$
for $s=\pm$,
where $dV$ is the Lebesgue measure on $\mathbb{R}^N$, $dS$ is the surface area element on $F^s_\delta$ and $D_n$ is the outer unit normal derivative on $F^s_\delta$ with respect to $\Omega^s_\delta$ (cf. \cite[\S1.2]{Helms:2009} and \cite[Theorem 16.25]{Lee:2013}).
By letting $\epsilon \to0$, we obtain
\begin{equation}
\label{eq:Stokes_green}
0\le \sum_{s=\pm}\int_{F^s_\delta}(\varphi (D_n u)-u(D_n\varphi))dS+\sum_{s=\pm}\int_{\Omega^{s}_\delta}u\Delta \varphi dV.
\end{equation}

Since $V$ separates $\Omega_\delta^+$ and $\Omega_\delta^-$ and $u$ and $\varphi$ are of class $C^1$ on $\Omega$,
$$
\lim_{\delta\to 0}\int_{F^+_\delta}(\varphi (D_n u)-u(D_n\varphi))dS=-\lim_{\delta\to 0}\int_{F^-_\delta}(\varphi (D_n u)-u(D_n\varphi))dS.
$$
Therefore, we conclude that
$$
0\le  \int_\Omega u\Delta \varphi dV
$$
by letting $\delta\to 0$. This means that $\Delta u\ge 0$ on $\Omega$ in the sense of distribution, and $u$ is subharmonic on $\Omega$ (\cite[Theorem 2.5.8]{Klimek:1991}).
\end{proof}

%
%
%
%
%
%
%
%

%
%
%
%
%




\end{document}